\definecolor{lightblue}{HTML}{044E9E}
\definecolor{NWOblue}{RGB}{24, 101, 124}
\numberwithin{equation}{section}
\newcommand{\ignore}[1]{}
\def\RR{\mathbb R}
\def\ZZ{\mathbb Z}
\def\NN{\mathbb N}
\newcommand{\PP}{\operatorname{P}} 
\newcommand{\E}{\operatorname{E}} 
\newcommand{\Var}{\operatorname{Var}} 
\newcommand{\Cov}[0]{\operatorname{Cov}}
\newcommand \abs[1]{\left|#1\right|}
\newcommand{\pr}[1]{\left(#1\right)}
\newcommand{\1}{\mathds{1}}
\newcommand{\e}{{\operatorname{e}}}
\renewcommand{\leq}{\leqslant}
\renewcommand{\geq}{\geqslant}
\newcommand*{\defeq}{\mathrel{\rlap{%
           \raisebox{0.3ex}{$\m@th\cdot$}}%
           \raisebox{-0.3ex}{$\m@th\cdot$}}%
           =}
\newcommand*{\eqdef}{=\mathrel{\rlap{%
           \raisebox{0.3ex}{$\m@th\cdot$}}%
           \raisebox{-0.3ex}{$\m@th\cdot$}}%
           }
\newcommand{\s}{\operatorname{s}}
\newtheorem{lemma}{Lemma}[section]
\newtheorem{theorem}[lemma]{Theorem}
\newtheorem{model}[lemma]{Model}
\newtheorem{definition}[lemma]{Definition}
\newtheorem{example}[lemma]{Example}
\newtheorem{remark}[lemma]{Remark}
\xpatchcmd{\proof}{\@addpunct{.}}{\@addpunct{:}}{}{}
\DeclareFontFamily{U}{mathx}{\hyphenchar\font45}
\DeclareFontShape{U}{mathx}{m}{n}{<-> mathx10}{}
\DeclareSymbolFont{mathx}{U}{mathx}{m}{n}
\DeclareMathAccent{\widebar}{0}{mathx}{"73}
\ttfamily \color{DarkGreen},      
\ttfamily\color{Gray}\footnotesize,
\begin{document}

\def\spacingset#1{\renewcommand{\baselinestretch}%
{#1}\small\normalsize} \spacingset{1.5}

\allowdisplaybreaks

\title{{\bf Higher-order approximation for uncertainty quantification in time series analysis}}

\author{
Annika Betken\hyperlink{myth}{\parbox{3pt}{\footnotemark[1]}} \\ \textit{University of Twente}
\and
Marie-Christine D\"uker\hyperlink{myth}{\parbox{3pt}{\footnotemark[2]}} \\ \textit{FAU Erlangen-N\"urnberg} }
\maketitle

\let\oldthefootnote\thefootnote
\renewcommand{\thefootnote}{\fnsymbol{footnote}}
\footnotetext[1]{\phantomsection\hypertarget{myth}{University of Twente, Faculty of Electrical Engineering, Mathematics and Computer Science, Enschede, Netherlands, 
\protect \url{a.betken@utwente.nl}}}
\footnotetext[2]{\phantomsection\hypertarget{myth}{FAU Erlangen-N\"urnberg, Department of Data Science, Erlangen, Germany,
\protect \url{marie.dueker@fau.de}}}
\let\thefootnote\oldthefootnote

\maketitle

\begin{abstract}
\noindent
For time series with high temporal correlation, the empirical process converges rather slowly to its limiting distribution. Many statistics in change-point analysis, goodness-of-fit testing and uncertainty quantification admit a representation as functionals of the empirical process and therefore inherit its slow convergence. As a result, inference based on the asymptotic distribution of those quantities is significantly affected by relatively small sample sizes. 
We assess the quality of higher-order approximations of the empirical process by deriving the asymptotic distribution of the corresponding error terms. Based on the limiting distribution of the higher-order terms, 
we propose a novel approach to calculate confidence intervals for statistical quantities such as the median.
In a simulation study, we compare coverage rates and lengths of these confidence intervals with those based on the asymptotic distribution of the empirical process and highlight some benefits of higher-order approximations of the empirical process.

\medskip
\noindent {\bf Keywords:} uncertainty quantification; confidence intervals; empirical process; quantiles; long-range dependence. \\
\noindent {\bf AMS subject classification:} Primary: 	62G15, 60F17. Secondary: 62G20.
\end{abstract}

\section{Introduction} \label{se:intro}

Let $X_n$, $n=1, \ldots, N$, be a time series stemming from a stationary stochastic process $X_n$, $n\in \NN,$ with marginal distribution function $F$, such that $F(x) = \PP(X_n \leq x)$ for all $n \in \NN$.
We study the empirical distribution function $F_N(x):=\frac{1}{N}\sum_{n=1}^N\1_{\left\{X_n\leq x\right\}}$. 
The rate of convergence, i.e., the increase of the sequence 
$a_N$, $N\in \NN$, which ensures weak convergence 
of the empirical process
\begin{align} \label{eq1}
a^{-1}_N N(F_N(x)-F(x))
\end{align}
to a non-degenerate limit, crucially depends on the 
behavior of the process' autocorrelation function
$\gamma(k):=\Cov(X_1, X_{k+1})$.
For short-range dependent time series, i.e., for stochastic processes with summable autocorrelations, $a_N=\sqrt{N}$. In contrast, for long-range dependent time series, i.e., for $\gamma(k)=k^{-(2-2H)}L(k)$ with $L$ a slowly varying function and $H \in (1/2, 1)$ the so-called Hurst parameter, we have $a_N=N^{H}L^{\frac{1}{2}}(N) $. In fact, under long-range dependence, the distribution of the empirical process converges much slower to its limit than under short-range dependence.
\par
To illustrate the slow convergence of the empirical process under strong temporal correlation, we would like to draw the reader's attention to Figure \ref{fig:empdistr_histograms}. The figure depicts the asymptotic behavior of the centered and standardized empirical distribution $F_N(x)$ evaluated at $x=0$ for different sample sizes. The underlying process is assumed to be fractional Gaussian noise with
Hurst parameter $H$. In this case, the sequence $a_N$, $N\in \NN$, in \eqref{eq1} can be explicitly calculated as $a_N = \varphi(0) N^H$ with $\varphi$ denoting the standard Gaussian density.
The quantity $\frac{1}{\varphi(0)}N^{1-H}(F_N(0)-F(0))$ is computed independently 10000 times for different sample sizes $N$ and the resulting values are summarized in the histograms in 
Figure \ref{fig:empdistr_histograms}. Due to Theorem 1.1 in \cite{dehling1989}, the quantity is expected to converge to a standard Gaussian random variable. Therefore, the histograms in Figure \ref{fig:empdistr_histograms} are expected to approach the standard Gaussian density function (depicted in red).
The convergence rate depends on the value of the Hurst parameter in that a small value ($H = 0.55$) results in relatively fast convergence while a large value ($H = 0.95$), and implied stronger temporal correlation, results in much slower convergence. This phenomenon is specific to long-range dependent time series and the focus of this work.

\begin{figure} 
\center
\scalebox{0.9}{\input{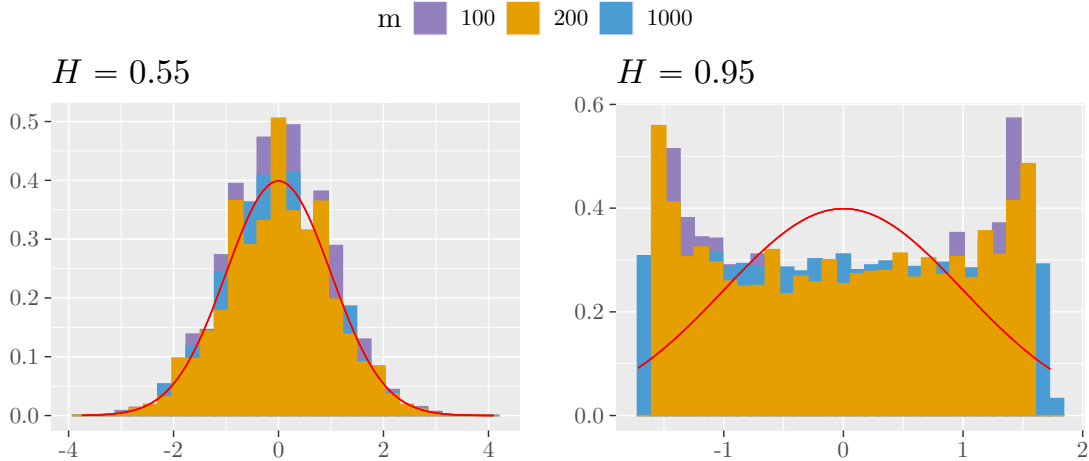}}
\vspace{-0.5cm}
\caption{The empirical distribution of the centered and standardized empirical distribution $F_N(x)$ evaluated at zero ($x=0$) under the assumption of Gaussian long-range dependent data with different Hurst parameters and for sample sizes $m = 100, 200, 1000$. The red line depicts the standard Gaussian density function.}
\label{fig:empdistr_histograms}
\end{figure}

\par
The empirical process serves as a powerful tool for characterizing the asymptotic behavior of a variety of test statistics used in change-point analysis and goodness-of-fit testing (Wilcoxon, Kolmogorov-Smirnov and Cram\`{e}r-von Mises statistics); see \cite{Beran1992Stat, Dehling2013,betken2016testing,betken2017change,tewes2018block}. 
When testing the hypothesis of stationarity against the alternative hypothesis of a structural change in a time series, the
phenomenon illustrated in Figure \ref{fig:empdistr_histograms} results in a high number of false positives; see \cite{Dehling2013}.
\par
Against this background, the contribution of this paper is twofold: On the one hand, we address a statistical issue arising in the calculation of confidence intervals under strong temporal correlation. On the other hand, we push forward the theoretical investigation of the empirical process by proving a novel limit theorem.
More precisely:
\begin{itemize}
\item
We study the construction of confidence intervals for the marginal distribution of stationary time series data and confidence intervals for its quantiles in long-range dependent time series. 
We propose a novel approach to calculate confidence intervals based on a higher-order approximation of the empirical distribution function.
Under long-range dependence (LRD), an asymptotic expansion of the empirical distribution that is similar in spirit to a Taylor expansion can
be derived. This expansion can be used to obtain higher-order approximations of certain
statistical functionals of the empirical process.
\item
We establish the theoretical validity of our method for statistics that can be considered as functionals of the empirical process.
For statistical applications beyond the construction of confidence intervals, e.g., change-point and goodness-of-fit tests, uniform convergence of the one-parameter empirical process
\eqref{eq1} does not suffice in order to derive limit distributions of corresponding statistics. These typically require consideration of the two-parameter (or sequential) empirical process
\begin{align*} \label{eq:two_peram_process}
a_N^{-1}\lfloor Nt\rfloor(F_{\lfloor Nt\rfloor}(x)-F(x)), \ t\in [0, 1], \ x\in \RR.
\end{align*}
We derive the asymptotic distribution of higher-order approximations of the sequential empirical process by proposing a new chaining technique. 
\end{itemize}

Constructing confidence intervals for unknown quantities in time series is a problem of substantial interest in statistics. 
In the statistical literature, the main focus has been on approximating the limiting distribution through finite sample procedures like subsampling and bootstrapping; see \cite{buhlmann2002bootstraps, shao2010self, NordmanBunzelLahiri2013, kim2015nonparametric, HuangShao2016}. From an entirely theoretical perspective, \cite{YOUNDJE2006109} investigate consistency properties of kernel-type estimators of quantiles under long-range dependence.
The interest in confidence intervals is also due to their relevance for uncertainty quantification in other sciences where they are used in a variety of fields including climate science, economics, finance, industrial engineering and machine learning; see \cite{massah2016confidence}, \cite{fang2018online}, \cite{hoga2019confidence}, \cite{purwanto2021partial}.
\par
Empirical process theory became one of the major themes in the historical progress of non-parametric statistics; see \cite{Donsker1952justification}, \cite{dudley1978central}, \cite{doukhan1998functional}, \cite{shorack2009empirical}, \cite{wellner2013weak}.
The applications are manifold, especially since many statistics have a representation as functionals of the empirical process, such that statistical inference can be based on the properties of the empirical process itself. 
In the empirical sciences, confidence intervals for unknown parameters or critical values for hypothesis tests
are derived from the distributional properties of the empirical process.
\par
For stationary Gaussian processes \cite{koul:surgailis:2002}, derived the asymptotic distribution of higher-order terms of the empirical process.
We extend their results substantially by considering the sequential empirical process and by allowing the underlying time series to be driven by subordinated Gaussian processes. Subordination extends the model's flexibility by allowing for a large class of marginal distributions. Furthermore, we are the first to propose a utilization of higher-order approximations of the empirical process for the calculation of confidence intervals which are robust to high temporal correlation in time series data. 
\par
Although long-range dependent processes are a popular modeling tool in a variety of domains (\cite{rust2011confidence}, \cite{WERON2002285}), the construction of confidence intervals under long-range dependence has not gotten much attention. 
We provide an empirical study comparing confidence intervals derived from the asymptotic distribution of the empirical process to confidence intervals based on higher-order approximations of the empirical process.
\par
For the population mean, \cite{hall1998sampling} propose a sampling window method to set confidence intervals under long-range dependence.
\cite{nordman2007empirical} consider the empirical likelihood for confidence intervals. For mean functions, \cite{Bagchietal2016} study a monotone function plus noise model with potential long-range dependence in the noise term and derive confidence intervals for the monotone functions. In contrast, we deal with a different, rank-based class of statistics.
\par
The literature review, as well as our motivation illustrated in Figure \ref{fig:empdistr_histograms}, show the strong influence of high temporal correlation on the performance of statistics derived from the empirical process. In this paper, we aim to address this issue by introducing a procedure based on higher-order approximations of the empirical process to construct confidence intervals for statistics of long-range dependent time series robust to high temporal correlation. Our theoretical contribution is of independent interest and potentially has further applications in change-point analysis and goodness-of-fit testing.
Furthermore, a reduction to a limit theorem in the short-range dependent regime allows an application of established resampling procedures such as the moving block bootstrap which has been proved to be invalid under long-range dependence; see
\cite{lahiri1993moving}.
\par
The rest of the paper is organized as follows: In Section \ref{se:prelim}, we introduce the considered setting in all details. Section \ref{se:HOA} motivates the consideration of higher-order approximations of the empirical process. Section \ref{se:mainresults} focuses on theoretical contributions which manifest the formal validity of the proposed method. In Section \ref{Confidenceintervals}, we discuss how to calculate confidence intervals based on the asymptotic distribution of the empirical process and propose an alternative approach based on higher-order approximations. The numerical study in Section \ref{se:numericalstudy} provides a comparison between the two methods. We conclude with Section \ref{se:conclusion}. Proofs of the theoretical results can be found in Appendices \ref{app:proofmain}, \ref{se:appendixB}, \ref{se:appendixc1}, and \ref{se:appendixC}.

\section{Preliminaries} \label{se:prelim}
While Section \ref{se:intro} provides insight into the motivation for considering higher-order approximations of statistics, we introduce here model assumptions which allow for this type of approximations (Section \ref{se:setting}) and give some technical details necessary for our analysis (Section \ref{se:FoGRV}).

\subsection{Setting} \label{se:setting}
For future reference, we subsume assumptions on the data-generating process under the following model specification:
\begin{model} \label{model}
Let $X_n$, $n \in \NN$, be a subordinated Gaussian process, i.e., $X_n=G(\xi_n)$ for some measurable function $G:\RR\to \RR$ and with $\xi_n$, $n\in\NN$, denoting a (standardized) long-range dependent Gaussian process, i.e., $\E( \xi_n) =0$, $\Var(\xi_n)=1$, and 
\begin{equation} \label{eq:ACF}
\gamma(k)=\Cov(\xi_1, \xi_{k+1})=\E(\xi_1 \xi_{k+1})=k^{-D}L(k),
\end{equation}
where $D \in (0,1)$ (the so-called long-range dependence (LRD) parameter) and $L$ a slowly varying function.
\end{model}

Relation \eqref{eq:ACF} corresponds to one of multiple different ways to define long-range dependence. A more general definition characterizes long-range dependent time series by the non-summability of the absolute values of its autocovariance function; see (2.1.6) in \cite{PipirasTaqqu}. In fact, \eqref{eq:ACF} implies that the series of the autocovariances diverges. We refer to Chapter 2.1 in \cite{PipirasTaqqu} for a detailed representation of different ways to define long-range dependence and their relations to each other. 

For any particular distribution function $F$, 
an appropriate choice of the transformation $G$
yields subordinated Gaussian processes with marginal distribution $F$.
Moreover, there exist algorithms for generating Gaussian processes that,
after suitable transformation, yield subordinated Gaussian processes with marginal distribution $F$ and a predefined covariance structure; see \cite{PipirasTaqqu}. 

The following example presents a process which satisfies Model \ref{model}.
\begin{example}[Definition 2.8.3 in \cite{PipirasTaqqu}] \label{ex:fGn}
Let $B_{H}(t), t \in \RR$, be a fractional Brownian motion. Then, the process $\xi_{H}(k), k \in \ZZ$, defined by
$$\xi_{H}(k) := B_{H}(k + 1) - B_{H}(k)$$ is called fractional Gaussian noise with Hurst parameter $H$.
\end{example}

\subsection{Gaussian subordination} \label{se:FoGRV}

In the study of functionals of Gaussian processes, Hermite polynomials play a fundamental role. In particular, they form a basis for the space of finite-variance functions of Gaussian random variables. Since they are an inevitable tool in our analysis, we provide a short review.

Let $L^2(\RR, \varphi(x)dx)$ be the space of functions which are square-integrable with respect to the Gaussian measure (here denoted by $\varphi(x)dx$). For $g\in L^2(\RR, \varphi(x)dx)$ and $\xi_n$, $n\in\NN$, we call the
sequence $X_n=g(\xi_n)$, $n\in\NN$, a
subordinated Gaussian sequence.

A collection of orthogonal elements in $L^2(\RR, \varphi(x)dx)$ is given by the sequence of Hermite polynomials; see Proposition 5.1.3 in \cite{PipirasTaqqu}.

\begin{definition}
For $n\geq 0$, the Hermite polynomial of order $n$ is defined by
\begin{align*}
H_n(x)=(-1)^{n}\e^{\frac{1}{2}x^2}\frac{d^n}{d x^n}\e^{-\frac{1}{2}x^2}, \ x\in \RR.
\end{align*}
\end{definition}
The Hermite polynomials form an orthogonal basis of 
$L^2(\RR, \varphi(x)dx)$. As a result, every $g \in L^2(\RR, \varphi(x)dx)$
has an expansion in Hermite polynomials, i.e., for $g \in L^2(\RR, \varphi(x)dx)$ and $\xi$ standard normally distributed, we have
\begin{align}\label{eq:Hermite_expansion}
g(\xi)=\sum\limits_{r=0}^{\infty}\frac{J_r(g)}{r!}H_r(\xi),
\hspace{0.2cm}
J_r(g)=\E g(\xi)H_r(\xi),
\end{align}
where $J_r(g), r \geq 0$, are the so-called {\em Hermite coefficients}. 

Given the Hermite expansion
\eqref{eq:Hermite_expansion}, it is possible to characterize the dependence structure of subordinated Gaussian time series $g(\xi_n)$, $n\in \NN$.
In fact, it holds that
\begin{align}\label{eq:cov_sub_Gaussian}
\Cov(g(\xi_1), g(\xi_{k+1}))=\sum\limits_{r=1}^{\infty}\frac{J^{2}_r(g)}{r!}\gamma^r(k),
\end{align}
where $\gamma$ denotes the autocovariance function of $\xi_n$, $n\in \NN$; see Proposition 5.1.4 in \cite{PipirasTaqqu}.
 Under the assumption that, as $k$ tends to $\infty$, $\gamma(k)$ converges to $0$ with a certain rate, the asymptotically dominating term in the series \eqref{eq:cov_sub_Gaussian} is the summand corresponding to the smallest integer $r$ for which the Hermite coefficient $J_r(g)$ is non-zero. This index, which decisively depends on $g$, is called {\em Hermite rank}. 

\begin{definition}[Definition 5.2.1 in \cite{PipirasTaqqu}]
Let $g \in L^2(\RR, \varphi(x)dx)$ with $\E g(\xi)=0$ for standard normally distributed $X$ and let $J_r(g)$, $r\geq 0$, be the Hermite coefficients in the Hermite expansion of $g$. The smallest index $k\geq 1$ for which $J_k(g)\neq 0$ is called the Hermite rank of $g$, i.e.,
\begin{align*}
r\defeq \min\left\{k\geq 1: J_k(g)\neq 0\right\}.
\end{align*}
\end{definition}

\section{Higher-order approximation} \label{se:HOA}

We utilize our model assumptions and 
give details on a characterization of the empirical process as a sum of first- and higher-order terms.

Given time series data $X_1, \ldots, X_N$ stemming from a subordinated Gaussian process $X_n$, $n\in \NN$, according to Model \ref{model} and with marginal distribution function $F$, we are interested in characterizing higher-order approximations of the sequential empirical process
\begin{align} \label{eq:empiricalprocess}
e_N(t, x)\defeq\sum_{n=1}^{\lfloor Nt\rfloor}\left(\1_{\{X_n\leq x\}}-F(x)\right), \ t\in [0, 1], \ x\in \RR.
\end{align}
Higher-order approximations can be derived through the Hermite expansion
\begin{align*} \label{eq:Hermiteexpansion}
\1_{\{X_n\leq x\}}-F(x)=\sum\limits_{l=r}^{\infty}\frac{\widetilde{c}_l(x)}{l!}H_l(\xi_n),
\end{align*}
where $\widetilde{c}_l(x)=\E\pr{\1_{\{G(\xi_0)\leq x\}}H_l(\xi_0)}$ and where $r$ denotes the corresponding Hermite rank
\begin{equation*} \label{eq:Hermiterank}
r:=\min_{x \in \RR} r(x) 
\hspace{0.2cm}
\text{ with }
\hspace{0.2cm}
r(x) := \min\{ q \geq 1 ~|~ \widetilde{c}_{q}(x) \neq 0 \}.
\end{equation*}
\cite{dehling1989} show that the first summand of this expansion determines the asymptotic distribution of the empirical process through the reduction principle
\begin{align} \label{eq:reductionP}
\frac{1}{d_{N, r}}\sum\limits_{n=1}^N\left(\1_{\{X_n\leq x\}}-F(x)\right)=\frac{\widetilde{c}_r(x)}{r!}\frac{1}{d_{N, r}}\sum\limits_{n=1}^N H_r(\xi_n)+o_P(1),
\end{align}
where $d^2_{N, r} = \Var \left( \sum_{n=1}^NH_r(\xi_n) \right) $.

In order to study higher-order terms, we utilize the following observation:
\begin{equation} \label{eq:seriesACF}
\sum_{n \in \NN} | \Cov(H_{l}(\xi_{1}), H_{l}(\xi_{n+1}) | = l! \sum_{n \in \NN} |\gamma(n)|^{l} 
\begin{cases}
= \infty, \hspace{0.2cm} lD<1, \\
< \infty, \hspace{0.2cm} lD>1;
\end{cases}
\end{equation}
see equation (5.1.1) in \cite{PipirasTaqqu} for the first equality in \eqref{eq:seriesACF}. Then, distinguishing the two cases in \eqref{eq:seriesACF}, the last relation is a consequence of \eqref{eq:ACF} and the assumption that $\frac{1}{D}\notin \NN$. 

The convergence behavior of the partial sums of autocovariances
provides another way of distinguishing short- and long-range dependence. 
While convergence is associated with short-range dependence, 
divergence indicates long-range dependence.

As a result, the sequence $H_l(\xi_n)$, $n\in \NN$, can be considered as long-range dependent when $lD<1$, while short-range dependent when $lD>1$. Moreover, the following holds
\begin{align} \label{eq:ghghghghgh}
\frac{\widetilde{c}_l(x)}{l!}\sum\limits_{n=1}^NH_l(\xi_n)=\mathcal{O}_P(N^{-\frac{Dl}{2}+1}L^{\frac{l}{2}}(N))
\ \text{for $l<\frac{1}{D}$,
while } \ 
\frac{\widetilde{c}_l(x)}{l!}\sum\limits_{n=1}^N H_l(\xi_n)=\mathcal{O}_P(\sqrt{N}) \ \text{for $l>\frac{1}{D}$,}
\end{align}
where we refer to equation (4.20) in \cite{beran2016long} for the first relation in \eqref{eq:ghghghghgh}. Note also that the memory parameter $D$ corresponds to the Hurst parameter $H$ through the relation $H = 1-\frac{D}{2}$.

Motivated by the behavior of the series over the autocovariances in \eqref{eq:seriesACF} and the different convergence rates in \eqref{eq:ghghghghgh}, we consider the separation
\begin{align*}
\sum\limits_{l=r}^{\infty}\frac{\widetilde{c}_l(x)}{l!}H_l(\xi_n)=L_n(x)+S_n(x)
\end{align*}
with
\begin{align} \label{eq:LandS}
L_n(x)=\sum\limits_{l=r}^{\lfloor\frac{1}{D}\rfloor}\frac{\widetilde{c}_l(x)}{l!}H_l(\xi_n)
\hspace{0.2cm}
\text{ and }
\hspace{0.2cm}
S_n(x)=\sum\limits_{l=\lceil \frac{1}{D}\rceil}^{\infty}\frac{\widetilde{c}_l(x)}{l!}H_l(\xi_n),
\end{align}
where for some $x \in \RR$, $\lfloor x \rfloor$ and $\lceil x \rceil$ map $x$ to the greatest integer less than or equal and the smallest integer greater than or equal to $x$.
Based on \eqref{eq:seriesACF}, the series over the autocovariances of $L_n(x)$ diverges, while $S_n(x)$ has an absolutely summable autocovariance function.
We refer to $L_n(x)$ in \eqref{eq:LandS} as ``lower-order term" and to $S_n(x)$ as ``higher-order term".

For the empirical process \eqref{eq:empiricalprocess}, higher-order approximations result from 
\begin{align*}
\frac{1}{N}e_N(t, x)=\frac{1}{N}\sum\limits_{n=1}^{\lfloor Nt\rfloor}L_n(x)+\frac{1}{N}\sum_{n=1}^{\lfloor Nt\rfloor}S_n(x).
\end{align*}
Based on the previous considerations, the two summands are expected to converge at different rates.
For our purpose, 
we aim at proving the convergence of $\frac{1}{\sqrt{N}}\sum_{n=1}^{\lfloor Nt\rfloor}S_n(x)$ parameterized in $t$ and $x$.

\begin{wrapfigure}{l}{0pt}
\raisebox{0pt}[\dimexpr\height-1.2\baselineskip\relax]{\scalebox{0.8}{\input{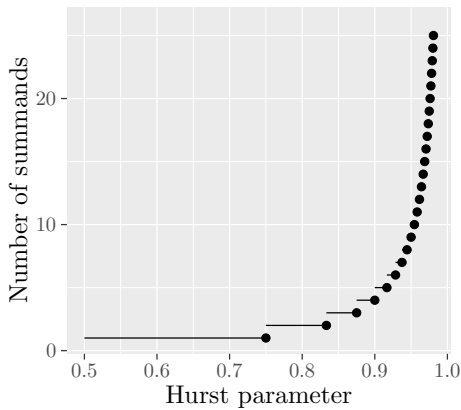}}}
\caption{Number of summands in the ``lower-order term" given that the Hermite rank $r=1$.}
\label{fig:HurstvsSummands}
\vspace{-1.5cm}
\end{wrapfigure}

To illustrate the observations made in this section, Figure \ref{fig:HurstvsSummands} depicts the Hurst parameter $H \in (\frac{1}{2},1)$ and the corresponding number of summands $\lfloor\frac{1}{D}\rfloor$, $D = 2-2H$, which contribute to the long-range dependent part $L_{n}(x)$ in \eqref{eq:LandS}. Note that the number of summands contributing to the lower-order term increases exponentially with the value of the Hurst parameter, while the interval length, i.e., the length of the subintervals of $(\frac{1}{2},1)$ which correspond to a certain number of summands, decreases.

\section{Main result} \label{se:mainresults}

In this section, we present our main technical contributions. Our main result is stated in Section \ref{se:statemnt}, followed by a layout of the proof ideas in Section \ref{se:roadmap}.

\subsection{Statement} \label{se:statemnt}

We establish a limit theorem for the higher-order term in the decomposition of the sequential empirical process in two parameters. For this, recall that 
\begin{equation*}
\frac{\lfloor Nt\rfloor}{\sqrt{N}} \left(F_{\lfloor Nt\rfloor}(x) - F(x)\right) = 
\sum\limits_{l=r}^{\lfloor\frac{1}{D}\rfloor} N^{-\frac{lD}{2}} L^{\frac{1}{2}}(N) Z^{(l)}_{N}(t, x)
+
\frac{1}{\sqrt{N}}\sum_{n=1}^{\lfloor Nt\rfloor}S_n(x)
\end{equation*}
with $Z^{(l)}_{N}(t, x) = N^{\frac{lD}{2}-1} L^{-\frac{l}{2}}(N) \sum_{n=1}^{\lfloor Nt\rfloor} \frac{\widetilde{c}_l(x)}{l!}H_l(\xi_n) $. 
According to Theorem 5.3.1 in \cite{PipirasTaqqu}, if suitably standardized, each of the first $\lfloor\frac{1}{D}\rfloor -r +1$ summands converges to a Hermite process of order $l$. More precisely, it holds that
\begin{align*}
Z^{(l)}_{N}(t, x)
=
N^{\frac{lD}{2}-1} L^{-\frac{1}{2}}(N) \sum_{n=1}^{\lfloor Nt\rfloor} \frac{\widetilde{c}_l(x)}{l!}H_l(\xi_n)
\overset{\mathcal{D}}{\to} 
\frac{\widetilde{c}_l(x)}{l!}\beta_{l, H}Z_H^{(l)}(t)
\end{align*}
in $D\left([-\infty, \infty]\times [0, 1]\right)$, where $\beta_{l, H}$ is a constant and $Z_H^{(l)}(t)$, $t\in [0, 1]$, an Hermite process of order $l$ with self-similarity parameter $H=1-\frac{lD}{2}$.
 The limit of the higher-order term
$\frac{1}{\sqrt{N}}\sum_{n=1}^{\lfloor Nt\rfloor}S_n(x)$
 is characterized by the following theorem.

\begin{theorem} \label{th:weakconvSRD}
Suppose $X_{n}, n \in \NN,$ satisfies Model \ref{model} and $X_{n}$ has a strictly monotone, continuous distribution function $F$ \ignore{with density $f\in L^p$ for some $p>1$}and $\frac{1}{D} \notin \NN$. Then, as $N \to \infty$,
\begin{equation*}
\frac{1}{\sqrt{N}}\sum_{n=1}^{\lfloor Nt\rfloor}S_n(x) \overset{\mathcal{D}}{\to}S(x,t)
\end{equation*}
in $D([-\infty, \infty] \times [0,1])$, where $S(x,t)$ is a mean zero Gaussian process with cross-covariances
\begin{equation} \label{eq:covS}
\Cov(S(x,t),S(y,u)) = \min \{ t,u \} \sum_{n \in \ZZ} \Cov(S_{0}(x),S_{n}(y)).
\end{equation}
\end{theorem}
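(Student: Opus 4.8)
\emph{Strategy.} I would establish the two standard ingredients of weak convergence in $D([-\infty,\infty]\times[0,1])$ separately: convergence of finite-dimensional distributions and tightness (equivalently, asymptotic equicontinuity with respect to a suitable semimetric). Write $U_{N}(t,x):=N^{-1/2}\sum_{n=1}^{\lfloor Nt\rfloor}S_{n}(x)$. Note first that, since $F$ is continuous (a consequence of $f\in L^{p}$), the kernel $\Gamma(x,y):=\sum_{n\in\ZZ}\Cov(S_{0}(x),S_{n}(y))$ in \eqref{eq:covS} is bounded and satisfies $\Gamma(x,y)\to 0$ as $y\to x$, by \eqref{eq:cov_sub_Gaussian} and \eqref{eq:seriesACF}; hence $S$ admits a version with continuous sample paths and it suffices to prove the two facts above, convergence in $D$ to $S$ then being automatic. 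Throughout I would use the key structural observation, immediate from \eqref{eq:LandS} and the hypothesis $1/D\notin\NN$, that $S_{n}(x)=\sum_{l\ge\lceil 1/D\rceil}\tfrac{\widetilde c_{l}(x)}{l!}H_{l}(\xi_{n})$ has Hermite rank at least $\lceil 1/D\rceil$ and that $\sum_{n\in\ZZ}|\gamma(n)|^{l}\le\sum_{n\in\ZZ}|\gamma(n)|^{\lceil 1/D\rceil}<\infty$ for every $l\ge\lceil 1/D\rceil$ by \eqref{eq:seriesACF}; thus $(S_{n}(x))_{n}$, and any finite linear combination $\sum_{i}a_{i}S_{n}(x_{i})$, is a short-range dependent functional of $(\xi_{n})_{n}$.

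\emph{Finite-dimensional distributions.} By the Cram\'er--Wold device it is enough to fix points $(t_{1},x_{1}),\dots,(t_{m},x_{m})$ and coefficients $a_{1},\dots,a_{m}$ and show that $\sum_{i}a_{i}U_{N}(t_{i},x_{i})$ is asymptotically normal with the variance prescribed by \eqref{eq:covS}. Ordering the distinct values among the $t_{i}$ as $0=s_{0}<s_{1}<\dots<s_{q}$, this statistic rewrites as $\sum_{j=1}^{q}N^{-1/2}\sum_{n=\lfloor Ns_{j-1}\rfloor+1}^{\lfloor Ns_{j}\rfloor}\Theta_{j}(\xi_{n})$, where each $\Theta_{j}$ is a fixed finite linear combination of the functions $x\mapsto S_{\cdot}(x_{i})$, hence again of Hermite rank at least $\lceil 1/D\rceil$. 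Asymptotic normality of such block sums is a multivariate, sequential form of the Breuer--Major central limit theorem for short-range dependent functionals of a Gaussian process: one verifies that every joint cumulant of order $\ge 3$ of the normalized block sums vanishes as $N\to\infty$ via the diagram formula for cumulants of products of Hermite polynomials, which applies verbatim to mixed products of the $\Theta_{j}$ over disjoint index blocks. The covariance of the limit is computed directly from \eqref{eq:cov_sub_Gaussian} and the orthogonality relation $\E[H_{l}(\xi_{0})H_{l'}(\xi_{n})]=\delta_{ll'}\,l!\,\gamma(n)^{l}$: one obtains $N^{-1}\sum_{n\le\lfloor Nt\rfloor}\sum_{n'\le\lfloor Nu\rfloor}\Cov(S_{n}(x),S_{n'}(y))\to\min\{t,u\}\,\Gamma(x,y)$, the contribution of pairs $(n,n')$ lying in disjoint $t$-blocks being negligible by Kronecker's lemma applied to the absolutely summable sequence $\big(\sum_{l}\tfrac{\widetilde c_{l}(x)\widetilde c_{l}(y)}{l!}\gamma(n)^{l}\big)_{n}$. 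In particular this confirms that $\Gamma$ is a valid covariance kernel.

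\emph{Tightness.} I would work on $[0,1]\times[-\infty,\infty]$ with the semimetric $\rho\big((t,x),(u,y)\big)=|t-u|^{1/2}+|F(x)-F(y)|^{1/2}$, which is totally bounded because $F$ is continuous, and establish, uniformly in $N$ and in the fixed argument, Kolmogorov-type increment bounds $\E|U_{N}(t,x)-U_{N}(u,x)|^{4}\le C(|t-u|+N^{-1})^{2}$ and $\E|U_{N}(t,x)-U_{N}(t,y)|^{4}\le C(F(x)-F(y))^{2}$, together with the mixed-increment estimate required by the Bickel--Wichura tightness criterion for $D([-\infty,\infty]\times[0,1])$; a chaining argument over $([0,1]\times[-\infty,\infty],\rho)$ then gives asymptotic equicontinuity of $U_{N}$. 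The $t$-direction bound is the moment inequality for partial sums of the short-range dependent sequence $(S_{n}(x))_{n}$ and is routine. For the $x$-direction, with $\psi_{n}:=S_{n}(x)-S_{n}(y)=\big(\1_{\{y<X_{n}\le x\}}-(F(x)-F(y))\big)-\big(L_{n}(x)-L_{n}(y)\big)$ one has Hermite rank $\ge\lceil 1/D\rceil$ and $\|\psi_{0}\|_{2}^{2}\le F(x)-F(y)$, whence $\Var\big(\sum_{n\le\lfloor Nt\rfloor}\psi_{n}\big)\le\big(\sum_{n}|\gamma(n)|^{\lceil 1/D\rceil}\big)\|\psi_{0}\|_{2}^{2}\,\lfloor Nt\rfloor=\mathcal{O}\big(N(F(x)-F(y))\big)$, so the fourth-moment bound reduces to showing $\kappa_{4}\big(\sum_{n\le\lfloor Nt\rfloor}\psi_{n}\big)=\mathcal{O}\big(N(F(x)-F(y))^{2}\big)$, which would follow from the diagram formula together with $\sum_{n}|\gamma(n)|^{\lceil 1/D\rceil}<\infty$. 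To control the infinitely many Hermite orders contributing to $\psi_{n}$ I would split the expansion at a level $M$: the part of order $\le M$ is a finite linear combination of the one-parameter processes $N^{-1/2}\sum_{n\le\lfloor Nt\rfloor}H_{l}(\xi_{n})$, each converging in $D[0,1]$ to a Brownian motion (mutually independent across $l$ by the orthogonality relation), carried by the bounded continuous weights $\widetilde c_{l}$ with $|\widetilde c_{l}(x)-\widetilde c_{l}(y)|\le\sqrt{l!}\,|F(x)-F(y)|^{1/2}$, so it converges in $D([-\infty,\infty]\times[0,1])$ to the corresponding truncation $S^{M}$ of $S$ for each fixed $M$; the tail is governed by $\delta_{M}:=\sup_{x}\sum_{l>M}\tfrac{\widetilde c_{l}(x)^{2}}{l!}$, which tends to $0$ because $\{g_{x}:=\1_{\{X\le\cdot\}}-F(\cdot):x\in[-\infty,\infty]\}$ is a compact subset of $L^{2}(\RR,\varphi(x)dx)$ (a continuous image of a compact interval, since $\|g_{x}-g_{y}\|_{2}^{2}\le|F(x)-F(y)|$ and $g_{\pm\infty}\equiv0$) and on compact sets Hermite tails are uniformly small. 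Interpolating $\min\{F(x)-F(y),\delta_{M}\}\le(F(x)-F(y))^{1-\theta}\delta_{M}^{\theta}$ yields increment bounds for the remainder $U_{N}-U_{N}^{M}$ that carry a factor $\delta_{M}^{\theta}\to0$ alongside a genuine $\rho$-modulus, so that $\lim_{M\to\infty}\limsup_{N\to\infty}\P\big(\sup_{t,x}|U_{N}-U_{N}^{M}|>\varepsilon\big)=0$ by a chaining bound; a standard approximation lemma then completes the tightness proof.

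\emph{Main obstacle.} The delicate point is precisely the last one: obtaining the fourth-moment --- equivalently, fourth-cumulant --- increment bounds for $U_{N}$ \emph{uniformly in $N$}, and for the remainder process \emph{quantitatively in the truncation level $M$}, given that $S_{n}(x)$ is an \emph{infinite} Hermite series, hence an unbounded functional of $(\xi_{n})_{n}$ several of whose components (those with $l$ just above $1/D$) sit near the long-range boundary, where the margin in $\sum_{n}|\gamma(n)|^{\lceil 1/D\rceil}<\infty$ is thin. Summing the diagram bounds crudely over Hermite orders produces $\ell^{1}$-type expressions in the coefficients $\widetilde c_{l}$ that need not be finite; the role of the new chaining technique is to interleave the chaining over the parameter $x$ with a control of the Hermite orders (through $\delta_{M}$ and the interpolation above), so that the relevant sums are always dominated by the $L^{2}$-quantity $\sum_{l\ge 1}\tfrac{\widetilde c_{l}(x)^{2}}{l!}=F(x)(1-F(x))$. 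Carrying this out while simultaneously absorbing the $\lfloor Nt\rfloor$-rounding and the inter-block ``seam'' terms in the two-parameter chaining is the technical heart of the argument; by contrast, the finite-dimensional convergence and the $t$-direction tightness are essentially classical.
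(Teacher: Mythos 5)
Your overall architecture (Cram\'er--Wold for the finite-dimensional distributions via a Breuer--Major-type argument, then a two-parameter tightness/chaining step) matches the paper's, and your f.d.d.\ part and variance bounds are sound. The proof breaks, however, at the central quantitative claim of your tightness argument: the $x$-direction increment bound $\E|U_N(t,x)-U_N(t,y)|^4\le C\,(F(x)-F(y))^2$, equivalently $\kappa_4\bigl(\sum_{n\le \lfloor Nt\rfloor}\psi_n\bigr)=\mathcal{O}\bigl(N(F(x)-F(y))^2\bigr)$. Writing $p=F(x)-F(y)$, the diagonal contribution to the fourth cumulant is $\sum_{n}\kappa_4(\psi_n)=\lfloor Nt\rfloor\,\kappa_4(\psi_0)$, and since $\psi_0$ contains the centered indicator $\1_{\{y<X_0\le x\}}-p$ of a small event, $\E\psi_0^4\asymp p$ while $3(\E\psi_0^2)^2\asymp p^2$; hence $\kappa_4(\psi_0)\asymp p$ and the diagonal term alone is of order $Np$, not $Np^2$. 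Consequently the best uniform bound available is of the form $\E|U_N(t,x)-U_N(t,y)|^4\le C\bigl(N^{-\theta}p+p^{3/2}\bigr)$ (this is exactly Lemma \ref{le:bill}), and for $p\ll N^{-1}$ this is much larger than $p^2$, so a pure Kolmogorov--Chentsov/Bickel--Wichura criterion over the semimetric $|F(x)-F(y)|^{1/2}$ cannot close. Your truncation at Hermite level $M$ and the interpolation $\min\{p,\delta_M\}\le p^{1-\theta}\delta_M^{\theta}$ address a different issue (the infinitude of Hermite orders, controlled through second moments) and do not repair this: the tail beyond level $M$ still contains the indicator minus a degree-$M$ polynomial, whose fourth moment of increments remains linear in $p$.

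The paper's proof is built precisely around this obstruction. It accepts the two-term bound $C_1\lambda^{-4}b^{2-\theta}N^{-\theta}(y-x)+C_2\lambda^{-4}b^2(y-x)^{3/2}$, proves a generalization of Billingsley's Theorem 12.2 (Lemma \ref{le:billmod12.2}) that propagates a \emph{sum} of two such terms to the maximum of partial sums, and then runs the dyadic chaining in $x$ only down to a finite depth $K_N$ chosen so that $K_N^9=o(N^{\theta})$ (so the $N^{-\theta}p$ terms remain summable over the $K_N$ levels) while $\sqrt N/2^{K_N}\to 0$; the final sub-grid remainder is not handled by moments at all but by monotonicity of the indicator in $x$ together with a deterministic centering correction of size $\sqrt N\,2^{-K_N}$ and a separate second-moment bound for the finite polynomial part $L_n$ (Lemmas \ref{le:randterm}--\ref{le:indicatorfct}). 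To make your proposal work you would need to replace the claimed $p^2$ bound by this weaker two-term estimate and supply both the modified maximal inequality and the truncated-chaining-plus-monotonicity endgame; as written, the chaining step has a genuine gap.
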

The proof of Theorem \ref{th:weakconvSRD} can be found in Appendix \ref{app:proofmain}.
\par

\begin{remark}
Note that we exclude the case $\frac{1}{D} \in \NN$. That excludes in particular the case $D = 1$, that is when the underlying time series is short-range dependent. Therefore, our result as it is stated cannot recover existing results for short-range dependent time series. Under short-range dependence, the empirical process is known to converge to the so-called Kiefer-M\"uller process; see \cite{muller1970glivenko,kiefer1972skorohod}.
\end{remark}

\subsection{Proof} \label{se:roadmap}

While the detailed proof of Theorem \ref{th:weakconvSRD} is given in Appendices \ref{app:proofmain}, \ref{se:appendixB}, and \ref{se:appendixc1}, we aim here to provide a roadmap of our proofs and to emphasize some of the main technical challenges.

For a proof of convergence in distribution as stated in Theorem \ref{th:weakconvSRD}, convergence of the finite-dimensional distributions and tightness are being established; see Sections \ref{subse:fdd} and \ref{subse:tightness}, respectively. While proving convergence of the finite-dimensional distributions can be considered straightforward, the main technical challenges arise in the proof of tightness. These challenges are the subject of this section.

The partial sum of the higher-order terms in \eqref{eq:LandS} 
\begin{equation} \label{eq:mtilde_mainbody}
\widetilde{m}_N\pr{x, t }\defeq 
\frac{1}{\sqrt{N}}\sum_{n=1}^{\lfloor Nt\rfloor}S_n(x)
=
\frac{1}{\sqrt{N}}\sum_{n=1}^{\lfloor Nt\rfloor}\sum\limits_{l=\lceil \frac{1}{D}\rceil}^{\infty}\frac{\widetilde{c}_l(x)}{l!}H_l(\xi_n)
\end{equation}
is a stochastic process in two parameters, which is one reason why proving tightness becomes particularly challenging. Another challenge results from the structure of the higher-order terms. In contrast to the empirical process, the higher-order terms are no longer bounded. While the transformed variables $H_l(\xi_n), n \in \NN$, for $l \geq \lceil \frac{1}{D}\rceil$ are short-range dependent, the underlying process $\xi_n, n \in \NN,$ is still long-range dependent with non-summable autocovariance function. The dependence on the memory parameter $D$ appears in the summation determining the number of summands going into the higher-order terms. 

The articles \cite{dehling1989}, \cite{koul:surgailis:2002} and \cite{elktaibi2016} are closest to our work. In the following layout of our proof, we emphasize how our results differ from these works.

\begin{enumerate}
\item It is necessary to prove tightness in two parameters, more precisely, in the space $D([-\infty,\infty] \times [0,1])$. Furthermore, we allow the underlying process to be subordinated Gaussian.
This makes our proofs decisively different from the proofs established in \cite{koul:surgailis:2002}, who only consider \eqref{eq:mtilde_mainbody} for fixed $t$ and did not allow for subordinated transformations of the underlying Gaussian process.
\item The first step of our proof is to reduce tightness in $D([-\infty,\infty] \times [0,1])$ to proving tightness in $D([0,1] \times [0,1])$. 
The corresponding object in $D([0,1] \times [0,1])$ can be written as
\begin{equation} \label{eq:mN:mainbody}
m_N\pr{x, t }\defeq \frac{1}{\sqrt{N}}\sum_{n=1}^{\lfloor Nt\rfloor} \sum\limits_{l=\lceil \frac{1}{D}\rceil}^{\infty}\frac{c_l(x)}{l!}H_l(\xi_n)
\hspace{0.2cm}
\text{ with }
\hspace{0.2cm}
c_l(x)=\E\pr{\1_{\{F(G(\xi_0))\leq x\}}H_l(\xi_0)}.
\end{equation}
\item We use a tightness criterion introduced in \cite{ivanoff:thefuncD1980} and later utilized in \cite{elktaibi2016} to prove tightness of the sequential empirical process under short-range dependence.
\cite{elktaibi2016} take advantage of the boundedness of the empirical process. Those techniques fail for \eqref{eq:mtilde_mainbody} since the higher-order terms of the empirical process can no longer be represented as an indicator function.
\item
In the main part of our proof, we reduce the tightness criterion in \cite{elktaibi2016} to bounding the probability
\begin{equation*} \label{eq:mbar:mainbody}
\PP\left(\widebar{m}_{N,b}(y,x) >\lambda \right)
\hspace{0,2cm}
\text{ with }
\hspace{0,2cm}
\widebar{m}_{N,b}(x, y)\defeq\sup_{t\in [0, b]}\left|m_N(y, t)-m_N(x, t)\right|
\end{equation*}
for some $b>0$ and $x,y \in [0,1]$ with $m_N(y, t)$ as in \eqref{eq:mN:mainbody}.
Typically, such bounds are derived through chaining techniques.
\cite{dehling1989} establish a corresponding argument for proving tightness of the empirical process of long-range dependent observations.
For this, they take advantage of the reduction principle as stated in \eqref{eq:reductionP}.
 The reduction principle reduces the problem to proving convergence of the partial sums of the dominating Hermite polynomial. 
Since none of the summands of the infinite series \eqref{eq:mtilde_mainbody} is asymptotically negligible, the chaining technique of \cite{dehling1989} does not apply to the considered situation.
\cite{betken2020change} establish a chaining technique 
for proving tightness of the tail empirical process of Long Memory Stochastic Volatility (LMSV) time series.
The major difference to
our argument results from a martingale structure of the tail empirical process of LMSV time series. This allows to apply
Freedman's inequality, i.e., a Bernstein-type inequality for martingale difference sequences which, as well, does not apply to the situation in this paper.
\item
A crucial part of the proof and second main technical contribution is to find a bound of the form
\begin{equation*}
\PP\left(\widebar{m}_{N,b}(y,x) >\lambda \right)
\leq 
C_{1,\gamma} \frac{1}{\lambda^4} b^{2-\theta} \frac{1}{N^{\theta}} \left(y-x\right)
+
C_{2,\gamma} \frac{1}{\lambda^4} b^2 \left(y-x\right)^{\frac{3}{2}}
\end{equation*}
for some $\theta>0$, any $b>0$ and all $x,y \in [0,1]$. The result is formally stated in Lemma \ref{le:bill}. Our proof consists of two major parts. The first one is to extend Theorem 12.2 in \cite{billingsley:1968} which provides a probabilistic bound for maxima over partial sums. In Lemma \ref{le:billmod12.2} we provide a similar result, allowing the bound to take a more general form.
The second main part of the proof is to verify the assumptions of Lemma \ref{le:billmod12.2}. Both, Lemma \ref{le:billmod12.2} and Theorem 12.2 in \cite{billingsley:1968} apply under very general assumptions in that both do not impose any assumptions on the dependence structure of the underlying process. However, both are based on a probabilistic bound on the distances between partial sums. Given strong temporal dependence, as in our setting, verifying this condition becomes particularly challenging.
\end{enumerate}

\section{Confidence intervals} \label{Confidenceintervals}
In this section, 
we focus on how to utilize higher-order approximations of the empirical process for the construction of confidence intervals.
To begin with, we determine confidence intervals for values of the marginal distribution $F$ of a time series $X_1, \ldots, X_N$
following Model \ref{model}.
The confidence intervals are based on the empirical analogue
$F_N(x):=\frac{1}{N}\sum_{n=1}^N \1_{\left\{X_n\leq x\right\}}$ of $F(x)$; see Section \ref{se:CImarginal}.
Following this, we derive confidence intervals for quantiles of the marginal distribution; see Section \ref{se:CIquantiles}. Section \ref{se:CI_Discussion} provides a comparative discussion.
\par
First and foremost, we are interested in how well these confidence intervals approximate optimal confidence
intervals.
For this, note that
the goodness of confidence intervals can be assessed on the basis of the following two criteria:
\begin{enumerate}[start=1,label={C\arabic*:}]
\item A high coverage probability, i.e., the probability that the true value of the estimated quantity 
lies in the considered confidence interval should be high.
\item A short length of the confidence interval.
\end{enumerate}
Based on these criteria we aim to compare the confidence intervals derived from higher-order approximations of the empirical process to confidence intervals that result from the asymptotic distribution of $F_N(x)$. Therefore, we will first rephrase how to compute the asymptotic confidence intervals and then move on to introducing our approach to derive confidence intervals.
\par
For ease of computations, we base all analysis on the assumption that we are given a subordinated Gaussian time series $X_n=G(\xi_n)$, $n=1, \ldots, N$, resulting from a strictly monotone function $G$.
In this case, the Hermite rank $r$ equals $1$ and the Hermite coefficients $\widetilde{c}_l(x)$ can be determined analytically. In particular, it holds that
\begin{align} \label{eq:ctilde}
\widetilde{c}_l(x)=
\begin{cases}
-H_{l-1}(G^{-1}(x))\varphi\left(G^{-1}(x)\right) \ &\text{if $G$ is increasing},\\
H_{l-1}(G^{-1}(x))\varphi\left(G^{-1}(x)\right) \ &\text{if $G$ is decreasing};
\end{cases}
\end{align}
see Lemma \ref{le:hermite_coefficient}.

\subsection{Confidence intervals for the marginal distribution} \label{se:CImarginal}

\noindent
\textit{Asymptotic confidence intervals:}
For a construction of confidence intervals based on the asymptotic distribution of the empirical process, note that 
\begin{align} \label{eq:empprocessconvergence}
\frac{N}{d_N}\left(F_N(x)-F(x)\right)\overset{\mathcal{D}}{\to}\widetilde{c}_1(x)Z, 
\end{align}
 where $Z$ is a standard normally distributed random variable, $\widetilde{c}_1(x)=\E\left( \1_{\left\{G(\xi_0)\leq x\right\}}\xi_0\right)$ and $d^2_{N}:= \Var\left(\sum_{i=1}^{N}\xi_i\right)\sim N^{2H}L(N)$; see \cite{dehling1989}.
Due to the fact that convergence in \eqref{eq:empprocessconvergence} holds in $D[-\infty, \infty]$, we have
 \begin{align*}
 1-\alpha = \Pr \left( |\widetilde{c}_1(x)|^{-1}\frac{N}{d_N}\left(F_N(x)-F(x)\right)\in \left(z_{\frac{\alpha}{2}}, z_{1-\frac{\alpha}{2}}\right) \right) + o(1),
 \end{align*}
where $z_{\alpha}:=\Phi^{-1}(\alpha)$ and $\Phi$ denotes the standard normal distribution function.
Therefore, an approximate $1-\alpha$ confidence interval for $F(x)$ based on the asymptotic distribution of the empirical process is given by
 \begin{align} \label{eq:CIasympemp}
 \left(F_N(x)-\frac{d_N}{N}|\widetilde{c}_1(x)|z_{1-\frac{\alpha}{2}}, F_N(x)-\frac{d_N}{N}|\widetilde{c}_1(x)|z_{\frac{\alpha}{2}}\right).
 \end{align}

Referring back to Example \ref{ex:fGn}, the following example establishes these confidence intervals \eqref{eq:CIasympemp} for fractional Gaussian noise.
 \begin{example}\label{ex:fGn_marginal}
For fractional Gaussian noise time series with Hurst parameter $H$, $d_N\sim N^{H}$ and $\left|\widetilde{c}_1(x)\right|=\varphi(x)$, such that the interval in \eqref{eq:CIasympemp} equals 
\begin{align*}
\left(F_N(x)-N^{H-1}\varphi(x)z_{1-\frac{\alpha}{2}}, F_N(x)-N^{H-1}\varphi(x)z_{\frac{\alpha}{2}}\right).
\end{align*}
\end{example}
\noindent
\textit{Confidence intervals based on higher-order approximations:}
For a construction of confidence intervals based on the higher-order approximation, note that according to Theorem \ref{th:weakconvSRD}
\begin{align*}
\sqrt{N}\left(F_N(x)-F(x)\right)-\frac{1}{\sqrt{N}}\sum\limits_{n=1}^NL_n(x)\overset{\mathcal{D}}{\to}Z(x),
\end{align*}
 where $Z(x)$ is normally distributed with mean zero and variance $\sigma^2(x):=\sum_{n\in \ZZ}\Cov(S_0(x), S_n(x))$ and convergence holds in $D[-\infty, \infty]$.
 As a result, we have
 \begin{align*}
 1-\alpha = \Pr \left((\sigma(x))^{-1}\left(\sqrt{N}\left(F_N(x)-F(x)\right)-\frac{1}{\sqrt{N}}\sum\limits_{n=1}^NL_n(x)\right)\in \left(z_{\frac{\alpha}{2}}, z_{1-\frac{\alpha}{2}}\right) \right) + o(1)
 \end{align*}
with $L_n$ as in \eqref{eq:LandS}.
Therefore, an approximate $1-\alpha$ confidence interval for $F(x)$ based on higher-order approximations of the empirical process is given by 
 \begin{align} \label{eq:CIsecemp}
 \left(F_N(x)-\frac{1}{N}\sum\limits_{n=1}^NL_n(x)-\frac{\sigma(x)}{\sqrt{N}}z_{1-\frac{\alpha}{2}}, F_N(x)-\frac{1}{N}\sum\limits_{n=1}^NL_n(x)-\frac{\sigma(x)}{\sqrt{N}}z_{\frac{\alpha}{2}}\right).
 \end{align}

\subsection{Confidence intervals for quantiles} \label{se:CIquantiles}

In this section, we establish confidence intervals for quantiles of the marginal distribution of long-range dependent time series. Initially, we 
describe the construction of confidence intervals for quantiles 
 based on the convergence of the empirical process. Subsequently, we discuss
 the construction of confidence intervals for quantiles
 based on higher-order approximations of the empirical process.

\textit{Asymptotic confidence intervals:}
The asymptotic distribution of empirical quantiles can be derived from the asymptotic behavior of the empirical process \eqref{eq:empprocessconvergence} and an application of the delta method. 
In fact, \cite{hossjer1995delta} showed that for a functional $ \phi : (D[-\infty, \infty], \| \cdot \|_{\infty}) \to \RR$, Hadamard-differentiable at $F$,
\begin{align} \label{eq:hossier}
\frac{N}{d_N}\left( \phi(F_N)-\phi(F) \right)\overset{\mathcal{D}}{\to} Z \phi'(F)(\phi(F)) \ \widetilde{c}_{1}(\phi(F)),
\end{align}
where $Z$ is a standard normally distributed random variable, $\widetilde{c}_1(x)=\E\left( \1_{\left\{G(\xi_0)\leq x\right\}}\xi_0\right)$, $\phi'(F)$ the derivative in $F$ and $d^2_{N}:= \Var\left(\sum_{i=1}^{N}\xi_i\right)\sim N^{2H}L(N)$; see Theorem 1 in \cite{hossjer1995delta}.
Since our goal is to establish confidence intervals for quantiles $ q_{p} = \inf\{ x ~|~ F(x) \geq p \}$, we consider $ \phi : (D[-\infty, \infty], \| \cdot \|_{\infty}) \to \RR$, $\phi(F) = F^{-1}(p)$. 
Given that $r=1$, \eqref{eq:hossier} corresponds to
\begin{align*}
\frac{N}{d_N}\left( F^{-1}_N(p)-F^{-1}(p) \right)
\overset{\mathcal{D}}{\to} 
-Z \frac{1}{F'(F^{-1}(p))} \widetilde{c}_{1}(F^{-1}(p)).
\end{align*}
As a result, we have
 \begin{align*}
 1-\alpha = \Pr \left( - F'(F^{-1}(p)) | \widetilde{c}_{1}(F^{-1}(p))|^{-1}\frac{N}{d_N}\left( F^{-1}_N(p)-F^{-1}(p) \right)\in \left(z_{\frac{\alpha}{2}}, z_{1-\frac{\alpha}{2}}\right)\right) + o(1).
 \end{align*}
Therefore, an approximate $1-\alpha$ confidence interval for $F^{-1}(p)$ based on the asymptotic distribution of the empirical process is given by
 \begin{align} \label{eq:asympCIquantiles}
 \left( F^{-1}_N(p) - \frac{d_N}{N} \frac{1}{F'(F^{-1}(p))} | \widetilde{c}_{1}(F^{-1}(p))|z_{\frac{\alpha}{2}}, F^{-1}_N(p) - \frac{d_N}{N} \frac{1}{F'(F^{-1}(p))} | \widetilde{c}_{1}(F^{-1}(p))|z_{1-\frac{\alpha}{2}}\right).
 \end{align}

\begin{example}\label{ex:fGn_quantile}
For fractional Gaussian noise time series with Hurst parameter $H$, $d_N \sim N^H$ and $\left|\widetilde{c}_1(x)\right|=\varphi(x)$, such that the interval in \eqref{eq:asympCIquantiles} equals 
\begin{align*}
\left( F^{-1}_N(p) + N^{H-1} z_{\frac{\alpha}{2}}, F^{-1}_N(p) + N^{H-1} z_{1-\frac{\alpha}{2}}\right).
\end{align*}
\end{example}

\textit{Confidence intervals based on higher-order approximations:}
We propose an alternative way to derive confidence intervals for the quantiles of the marginal distribution of long-range dependent time series based on higher-order approximations of the empirical process. 
Recall that quantiles can be written as a functional of the distribution $F$ as well as their estimated counterparts. Based on Taylor approximation of the functional $\phi$, we can then write
\begin{equation} \label{eq:Taylorapprox}
\frac{N}{d_N}\left(\phi(F_N)-\phi(F)\right) = \phi'_{F}\left(\frac{N}{d_N}(F_N - F)\right) + o_{P}(1);
\end{equation}
see \cite{hossjer1995delta} and also Theorem 20.8 in \cite{van2000asymptotic}.
The right-hand side can be further simplified by 
\begin{equation} \label{eq:derivativefunctional}
\phi'_{F}\left(\frac{N}{d_N}(F_N - F)\right)
= \frac{N}{d_N} \frac{p-F_{N}(F^{-1}(p))}{F'(F^{-1}(p))}
= - \frac{N}{d_N} \frac{(F_{N} - F)(F^{-1}(p))}{F'(F^{-1}(p))};
\end{equation}
see p.\ 294 in \cite{van2000asymptotic}.
Under the assumption that the underlying time series has Gaussian marginals and for $p=\frac{1}{2}$ (such that $F^{-1}(p)$ corresponds to the median) we get $\phi'_{F}(\frac{N}{d_N}(F_N - F)) = -\frac{N}{d_N}\frac{(F_{N} - F)(0)}{\varphi(0)}$.
Then, an approximate $1-\alpha$ confidence interval of $F^{-1}(p)$ can be written as
\begin{equation} \label{eq:secCIquantiles}
 \begin{aligned}
&
\Bigg(
\phi(F_N) + \frac{1}{ \varphi(F^{-1}(p)) } \Big( \frac{1}{N}\sum\limits_{n=1}^{N}L_n(F^{-1}(p)) + \frac{1}{\sqrt{N}} \sigma(F^{-1}(p)) z_{1-\frac{\alpha}{2}} \Big),
\\&\hspace{1cm}
\phi(F_N) + \frac{1}{ \varphi(F^{-1}(p)) } \Big( \frac{1}{N}\sum\limits_{n=1}^{N}L_n(F^{-1}(p)) + \frac{1}{\sqrt{N}} \sigma(F^{-1}(p)) z_{\frac{\alpha}{2}} \Big)
\Bigg);
\end{aligned}
\end{equation}
see Lemma \ref{le:CIquantiles} and its proof for more details on the calculations.

\subsection{Discussion} \label{se:CI_Discussion}

The confidence intervals established in Sections \ref{se:CImarginal} and \ref{se:CIquantiles}
all depend on the subordinating function $G$ as well as the Hurst parameter $H$, quantities that are unknown in practice. 
Additionally, 
confidence intervals based on the asymptotic distribution of the empirical process (such as \eqref{eq:CIasympemp} and \eqref{eq:asympCIquantiles}) depend on the slowly varying function $L$ through $d_N$.
By definition $d_N^2$ corresponds to the long-run variance of a long-range dependent Gaussian process. Due to the fact that the data is assumed to be subordinated to this process, $d_N$ cannot be estimated straightforwardly, i.e., by a long-run variance estimator applied to the observed data.
On the other hand, an estimation can be based on the asymptotic relation $d_N\sim N^HL^{\frac{1}{2}}(N)$. For this, it has to be taken into account that
$H$ characterizes the autocovariances of the Gaussian process (not the observed subordinated process). Only for a Hermite rank of the subordinating function $G$ that equals $1$, the Hurst parameter 
of the Gaussian process and that of the subordinated Gaussian process coincide, such that $H$ can be estimated by established methods (such as $R/S$-estimation or local Whittle estimation); see also Section \ref{se:estimatedHurst}.
Nonetheless, estimation of $d_N$ also requires an approximation of the slowly varying function $L$. 
Unfortunately, we are not aware of any estimation procedure meeting this task. In particular cases, e.g., when the data stems from fractional Gaussian noise, $L$ corresponds to a multiplicative constant depending on the parameter $H$ only; see Examples \ref{ex:fGn_marginal} and \ref{ex:fGn_quantile}. In these cases, the estimation can solely be based on estimation of $H$, but presupposes knowledge of the subordinating function $G$.
In contrast to confidence intervals based on the asymptotic distribution of the empirical process, confidence intervals based on higher-order approximations of the empirical process (such as \eqref{eq:CIsecemp} and \eqref{eq:secCIquantiles}) do not depend on $L$.
For an empirical comparison of the two procedures for confidence interval construction in Section \ref{se:numericalstudy} we assume knowledge of the slowly varying function $L$. This knowledge can be exploited for confidence interval construction based on the asymptotic distribution of the empirical process, but is not needed for the proposed method of confidence interval construction based on higher-order approximations of the empirical process. 
Moreover, we would like to point out 
that \eqref{eq:secCIquantiles} only depends on the Hurst parameter $H$ through $L_{n}$; see \eqref{eq:CIsecemp}. In particular, the ceiling function applied to $H$ determines the number of summands included in the construction of confidence intervals based on higher-order approximations. Accordingly, these are less sensitive to small errors in the estimation of $H$ than confidence intervals based on the asymptotic distribution.
Given that inference on long-range dependent time series relies on how well the corresponding Hurst parameter is estimated, we expect confidence intervals based on higher-order approximations to be more robust to misspecification of $H$.

Section \ref{se:CIquantiles} focuses on deriving confidence intervals for quantiles of the marginal distribution based on a higher-order approximation of the empirical process. The proposed procedure takes advantage of the Taylor expansion \eqref{eq:Taylorapprox} of a general functional $\phi$. Due to the generality of the results, we believe that similar results can be achieved for other estimators that have a representation as functionals of the empirical process such as Huber's estimator and M-estimators.

\section{Numerical Studies} \label{se:numericalstudy}

For our numerical studies, we consider the procedures proposed in Section \ref{Confidenceintervals}. We compare the coverage rate as well as the length of asymptotic confidence intervals with those based on higher-order approximations.
To assess the performance of the proposed procedures, we assume that the underlying time series follows Model \ref{model} with $G=id$, i.e., the time series is assumed to be long-range dependent with Gaussian marginals. 
In particular, we assume that $G=id$ is known although
in practice $G$ needs to be estimated. 
Estimation of $G$ can, for example, be based on the relation $X\overset{\mathcal{D}}{=}F^{-1}(\Phi(\xi))$ (resulting from $X=G(\xi)$ for a standard normally distributed random variable $\xi$), where $\Phi$ denotes the standard normal distribution function and $F$ the marginal distribution of $X$.
Accordingly, $G$ could be estimated by $\hat{F}^{-1}\circ \Phi$, where $\hat{F}^{-1}$ corresponds to the generalized inverse of the empirical distribution function of the observed data.
Note that, nonetheless, estimation of $G$ will add uncertainty to both procedures, such that for the purpose of comparison we refrain from estimation of $G$.

In the following, we focus on confidence intervals for the marginal distribution and confidence intervals for the median (Sections \ref{se:CImarginalnumerical} and \ref{se:CImediannumerical}). Section \ref{se:estimation} discusses the estimation of the long-run variance and the Hurst parameter.

\subsection{Estimation of long-run variance and Hurst parameter} \label{se:estimation}

In order to compute the confidence intervals discussed in Section \ref{Confidenceintervals}, we need to estimate the long-run variance. Furthermore, we provide simulation results under the assumption that the Hurst parameter $H$ is known and under the assumption that $H$ is unknown. 

The long-run variance $\sigma^2(x):=\sum_{n\in \ZZ}\Cov(S_0(x), S_n(x))$ cannot be computed analytically. 
In order to make our results applicable, we therefore need to estimate $\sigma^2(x)$. We use the kernel smoothing long-run variance estimator
\begin{equation*}
\widehat{\sigma}^{2}(x) = \sum_{j = -(N-1)}^{N-1} K\left( \frac{j}{b_{N}} \right) \widehat{\gamma}_{N}(j),
\end{equation*}
where $K(x) = (1 -|x|) \1_{\{|x| \leq 1\}}$ is the Bartlett kernel function, $b_{N}$ denotes a bandwidth parameter and $\widehat{\gamma}_{N}(j)$ is the sample autocovariance at lag $j$.
For our simulation study, we use the command \verb$hurstexp$ in the \verb$R$ package \verb$cointReg$.
To determine the bandwidth, we use the command \verb$getBandwidth$.
For an estimation of the Hurst parameter $H$ we used the 
$R/S$ procedure following the description in Section 2.1 in \cite{WERON2002285}. 
The estimator is implemented by \verb$getLongRunVar$ in the \verb$R$ package \verb$pracma$.

\subsection{Confidence intervals for the marginal distribution} \label{se:CImarginalnumerical}

We construct confidence intervals for the marginal distribution $F$ 
based on the asymptotic distribution and based on higher-order approximations of the empirical process of long-range dependent time series. For a visual comparison of the two different methods see Figures \ref{fig:marginaldistr_H=0.55_m=200}--\ref{fig:marginaldistr_bands}.
To numerically assess the quality of the computed intervals, we report their coverage rate and width evaluated at different $x$.
In our simulation study, we consider different scenarios ranging from small to large sample sizes ($N=200$ and $N=1000$) as well as from small to large Hurst parameters ($H=0.55$ and $H= 0.95$).
Pointing towards Figures \ref{fig:marginaldistr_H=0.55_m=200} and \ref{fig:marginaldistr_H=0.95_m=1000}, which are based on sample sizes $N=200$ and $N=1000$, we see only a slight improvement of the interval length for the asymptotic confidence intervals. The mild improvement emphasizes how the asymptotic confidence intervals are impacted by the slow convergence rate of the empirical process under long-range dependence.
That said, we fix the sample size to $N=200$ and compare Figures \ref{fig:marginaldistr_H=0.55_m=200} and \ref{fig:marginaldistr_H=0.95_m=200}. Focusing on $x=0$, one can observe that for larger Hurst parameters, the width increases significantly for the asymptotic method. Naturally, the increase in width results in a higher coverage rate. 
Confidence intervals based on the proposed 
 higher-order approximation method (HOA), however, 
 are robust with respect to the value of the Hurst parameter
and outperform the traditional construction of confidence intervals with respect to the coverage rate; see Figures \ref{fig:marginaldistr_H=0.55_m=200} and \ref{fig:marginaldistr_H=0.95_m=200}.

Most notably, Figures \ref{fig:marginaldistr_H=0.55_m=200} and \ref{fig:marginaldistr_H=0.95_m=200} reveal that asymptotic confidence intervals may have lower coverage rates than those based on higher-order approximations, while confidence intervals based on higher-order approximations are shorter.
This phenomenon results from the fact that the centers of the confidence intervals differ,
i.e., the smaller confidence interval is not necessarily contained in the larger one. In particular, it therefore happens that the asymptotic confidence interval is larger, but nonetheless does not cover $F(x)$.

\begin{figure}
\center
\scalebox{0.9}{\input{plots/normal_H=0.55_m=200_and_m=1000.tex}}
\vspace{-0.5cm}
\caption{The coverage rate and length of confidence intervals for the marginal distribution $F(x)$ evaluated at different $x$. 
The two displayed methods to calculate the confidence intervals are based on the asymptotic distribution (asymp) and our higher-order approximation (HOA).
The simulations are based on 2000 repetitions for Gaussian time series of length $N=200$ (first row) and $N=1000$ (second row) with Hurst parameter $H=0.55$. The dashed gray line depicts the significance level of $95\%$.}
\label{fig:marginaldistr_H=0.55_m=200}
\label{fig:marginaldistr_H=0.55_m=1000}
\end{figure}

\begin{figure}
\center
\scalebox{0.9}{\input{plots/normal_H=0.95_m=200_and_m=1000.tex}}
\vspace{-0.5cm}
\caption{The coverage rate and length of confidence intervals for the marginal distribution $F(x)$ evaluated at different $x$. 
The two displayed methods to calculate the confidence intervals are based on the asymptotic distribution (asymp) and our higher-order approximation (HOA).
The simulations are based on 2000 repetitions for Gaussian time series of length $N=200$ (first row) and $N=1000$ (second row) with Hurst parameter $H=0.95$. The dashed gray line depicts the significance level of $95\%$.}
\label{fig:marginaldistr_H=0.95_m=200}
\label{fig:marginaldistr_H=0.95_m=1000}
\end{figure}

 \begin{figure}
 \center
 \scalebox{0.9}{\input{plots/plot.tex}}
 \vspace{-0.5cm}
 \caption{Confidence intervals for the marginal distribution $F$. 
 The two displayed methods to calculate the confidence intervals are based on the asymptotic distribution (asymp) and our higher-order approximation (HOA).
 The simulations are based on 1000 repetitions for Gaussian time series of length $N=100$ with Hurst parameters $H=0.6, H=0.75$ and $H=0.9$. }
 \label{fig:marginaldistr_bands}
 \end{figure}

\subsection{Confidence intervals for the median} \label{se:CImediannumerical}
In this section, we consider confidence intervals for the median based on long-range dependent time series characterized by different Hurst parameters.
Again, we consider different scenarios ranging from small to large sample sizes ($N=200$ and $N=1000$) as well as from small to large Hurst parameters (from $H=0.55$ to $H= 0.95)$, and we assess the quality of the confidence intervals through interval length and coverage rate. 
Pointing towards Figure \ref{fig:median_m=200}, which is based on sample sizes $N=200$ and $N=1000$, we see only a slight improvement of the interval length for the asymptotic confidence intervals. This emphasizes how the asymptotic confidence intervals are impacted by the slow convergence rate of the empirical process under long-range dependence.
Therefore, instead of considering the impact of the sample size, we focus on how varying the Hurst parameter influences the coverage rate and interval length.
In this regard, Figure \ref{fig:median_m=200} clearly demonstrates that 
the length of a confidence interval constructed on the basis of the asymptotic distribution of the empirical process increases almost exponentially with increasing value of $H$. This may be attributed to the exponential increase of the number of summands needed to calculate the lower-order terms of the empirical process; see Figure \ref{fig:HurstvsSummands}.

In contrast to basing confidence intervals on the asymptotic distribution of the empirical process, Figure \ref{fig:median_m=200} illustrates robustness of the confidence interval lengths to different values of the Hurst parameter if the construction of confidence intervals is based on higher-order approximations of the empirical process. For Hurst parameters bigger than $H=0.9$ a significant drop of the coverage rate can be observed. 
 We attribute this observation to the fact that the stronger the dependence in a time series the higher the number of observations needed to reflect this dependence.
When adjusting confidence intervals by the true number of summands in the lower-order term this finite-sample phenomenon is not accounted for resulting in lower coverage rates.

\begin{figure}
\center
\scalebox{0.9}{\input{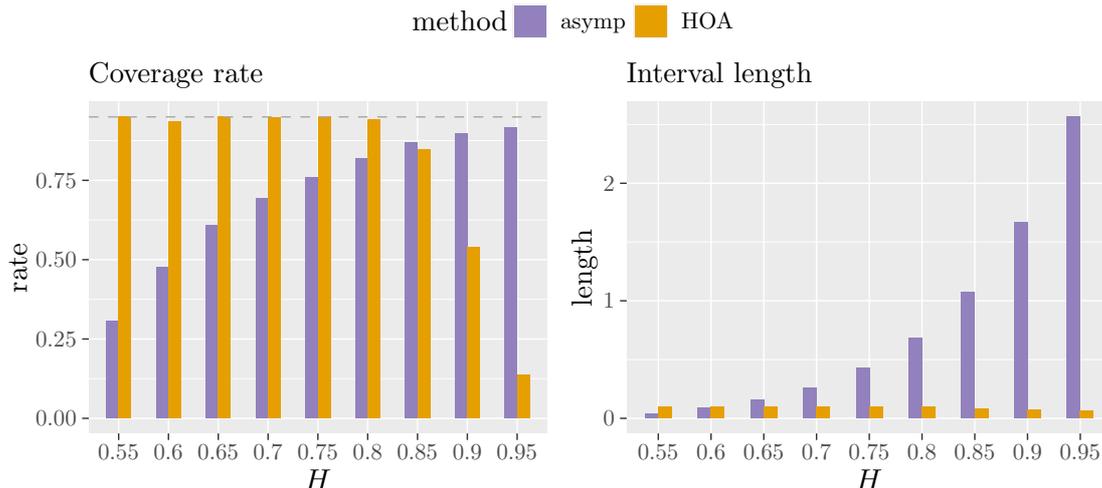}}
\vspace{-0.5cm}
\caption{Coverage rate and interval length of confidence intervals for the median $F^{-1}(1/2)$ based on long-range dependent time series characterized by different Hurst parameters. 
For this, the distribution of the median is approximated by the asymptotic distribution (asymp) and our higher-order approximation (HOA) of the empirical process.
Simulations are based on 2000 repetitions for Gaussian time series of length $N=200$ (first row) and $N=1000$ (second row). The dashed gray line depicts the significance level of $95\%$.}
\label{fig:median_m=200}
\end{figure}

\subsection{Confidence intervals based on an estimated Hurst parameter} \label{se:estimatedHurst}
To make the construction of confidence intervals based on higher-order approximations of the empirical process feasible for practical purposes, we need to consider the case where the Hurst parameter is unknown. As discussed in Section \ref{se:estimation}, we base estimation of the Hurst parameter on the so-called $R/S$-method.
In this section, we focus on studying confidence intervals for the median. As done in Figure \ref{fig:median_m=200}, the median was considered for a range of different Hurst parameters. We therefore use the median to illustrate how an estimated Hurst parameter changes the empirical coverage rates and lengths; see Figure \ref{fig:median_estimated}.

Next, we compare the numerical results based on estimation of the Hurst parameter (Figure \ref{fig:median_estimated}) with the numerical results that are based on the assumption that the Hurst parameter is known (Figure \ref{fig:median_m=200}). 
It is notable that the lengths of confidence intervals resulting from approximation of the empirical process by its asymptotic distribution tend to be shorter when the Hurst parameter is estimated while their coverage rate is lower.
Although the coverage rate of confidence intervals that are based on higher-order approximations of the empirical process and estimation of the Hurst parameter declines for Hurst parameters larger than $0.9$, this effect is not as pronounced as for the simulations that assumed knowledge of the Hurst parameter.
We attribute this phenomenon to the fact that $R/S$ estimation tends to underestimate the Hurst parameter and that the higher the value of the Hurst parameter, the bigger the estimation bias; see \cite{taqqu1995estimators}.
An underestimation of the Hurst parameter results in a smaller number of lower-order terms entering the approximation of confidence intervals. 
We conjecture that basing the approximation of confidence intervals on a lower number of summands than suggested by our theory for long-range dependent time series compensates for very strong dependence in time series not being reflected in the finite-sample behavior of time series with relatively low sample size.
We call this phenomenon \enquote{benign underestimation of long-range dependence}. Our conjecture is supported by the fact that an increasing number of observations results in a drop of coverage rate based on the estimated Hurst parameter; see Figure 7. 

Note that similar to the confidence intervals for the marginal distribution, we can observe that the asymptotic confidence intervals result in a smaller coverage rate than the ones based on higher-order approximations while the latter are shorter. 
In particular, one can see that the larger the Hurst parameter, the smaller the coverage rate of the asymptotic confidence intervals while the intervals based on higher-order approximations maintain a constant coverage rate and interval length.

\begin{figure}
\center
\scalebox{0.9}{\input{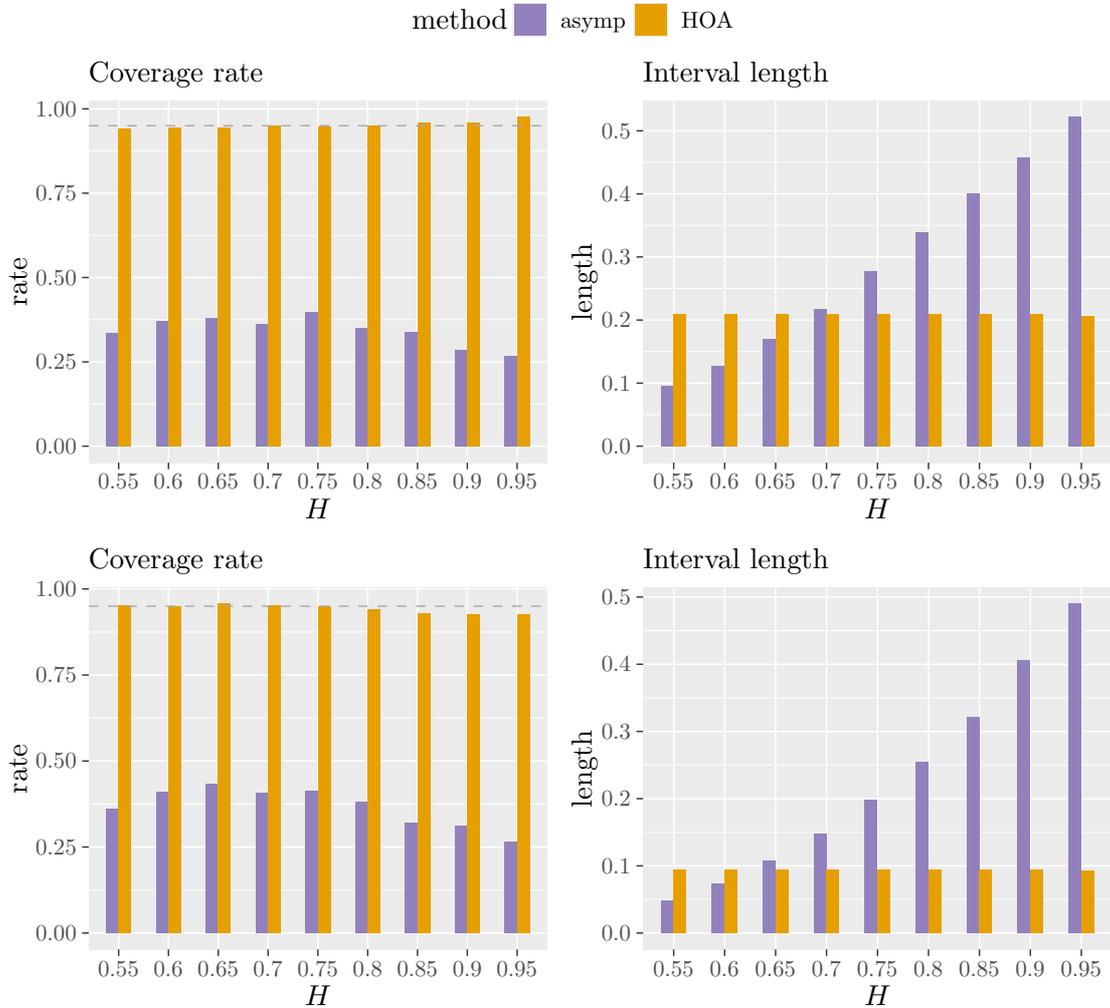}}
\vspace{-0.5cm}
\caption{
Coverage rate and interval length of confidence intervals for the median $F^{-1}(1/2)$ based on long-range dependent time series characterized by different Hurst parameters. 
For this, the distribution of the median is approximated by the asymptotic distribution (asymp) and our higher-order approximation (HOA) of the empirical process. The Hurst parameter is replaced by its $R/S$-estimator.
Simulations are based on 2000 repetitions for Gaussian time series of length $N=200$ (first row) and $N=1000$ (second row). The dashed gray line depicts the significance level of $95\%$.}
\label{fig:median_estimated}
\end{figure}

\section{Conclusion and Discussion} \label{se:conclusion}
In this work, we study higher-order approximations of the empirical process as an approach to improving statistical inference for long-range dependent time series.
More precisely, we study confidence intervals for values of the empirical process and for quantiles of the marginal distribution of stationary time series that are based on an approximation of the empirical process through higher-order terms in its Hermite expansion.
For statistics that can be expressed as partial sums of a subordinated Gaussian process, the Hermite expansion corresponds to an $L^2$-expansion of the subordinating function in orthonormal polynomials. The inclusion of higher-order terms in this expansion for the construction of confidence intervals results in narrower and more accurate confidence bands, especially when compared to those derived from first-order asymptotic theory.
Most notably, this approach differs from Gram–Charlier and Edgeworth expansions, which 
aim at improving approximations of the cumulative distribution function through incorporation of higher-order information. The latter techniques
incorporate correction terms relating to skewness, kurtosis, higher-order cumulants or moments and, to the best of our knowledge, have so far been analyzed against the background of short-range dependent time series and under the assumption of existing higher-order moments. Nonetheless, just as in the Edgeworth expansion framework, the number of terms included in the Hermite expansion of the considered statistic improves upon approximation quality from a theoretical perspective.
In practice, however, the theoretical improvement does not show due to the finite-sample behavior of statistics not adequately reflecting long-range dependence in time series.
Interestingly, our numerical results suggest that this mismatch between theory and practice may be mitigated by a phenomenon  we term {\em benign underestimation of long-range dependence}; see Section~\ref{se:estimatedHurst}. This effect appears to stabilize inference procedures in practice and presents another intriguing direction for future research.

The main theoretical contribution of this article is a proof for the convergence of higher-order terms in the Hermite expansion of the sequential empirical process for long-range dependent time series. This result is of general interest for empirical process theory and paves the way for novel approaches with respect to statistical inference for long-range dependent time series. 
First numerical approaches using the established theory illustrate an alternative way of constructing confidence intervals based on long-range dependent observations.

In comparison to the construction of confidence intervals based on the asymptotic distribution of the empirical process, the proposed procedure improves the quality of confidence intervals for the empirical process and quantiles of the marginal distribution. Generally speaking, our results provide  sufficient theoretical groundwork for the use of higher-order approximations for statistical inference on long-range dependent time series.
We conjecture that analogous theory would establish higher-order approximations for sequential partial sum processes of subordinated Gaussian sequences.
Such results would lay the foundation for improving upon statistical inference in change-point analysis for long-range dependent time series.
When testing stationarity against the alternative hypothesis of a structural change in a time series by means of the Wilcoxon test, the
phenomenon illustrated in Figure \ref{fig:empdistr_histograms} results in a high number of false positives; see \cite{Dehling2013}. While our proposed second-order approximation is expected to resolve an inflated test size, a  potential drawback could be a lower size being accompanied by a loss in test power. This is an expected trade-off since critical values derived from higher-order approximations would rely heavily on the lower-order term that also drives most of the behavior of the corresponding test statistic.
In addition to applications in change-point analysis, we envision
that the established theory for the two-parameter empirical process applies to goodness-of-fit testing in the presence of long-range dependence based on Kolmogorov-Smirnov and Cram\'{e}r-von Mises statistics.
We leave both change-point analysis and goodness-of-fit testing based on higher-order approximation of the empirical process as challenges for future research.

\appendix

\section{Proof of Theorem \ref{th:weakconvSRD}} \label{app:proofmain}

In order to prove Theorem \ref{th:weakconvSRD}, we first investigate the convergence of the finite-dimensional distributions and then tightness in $D([-\infty, \infty] \times [0,1])$; see Sections \ref{subse:fdd} and \ref{subse:tightness}, respectively. 

For the proof we will make use of the following notation
\begin{equation} \label{eq:tildem}
\widetilde{m}_N\pr{x, t }\defeq \frac{1}{\sqrt{N}}\sum_{n=1}^{\lfloor Nt\rfloor}S_n(x)
\end{equation}
with $S_n(x)$ as in \eqref{eq:LandS}.

\subsection{Convergence of the finite-dimensional distributions} \label{subse:fdd}

We need to show convergence of the finite-dimensional distributions, i.e., 
\begin{equation*}
\widetilde{m}_N\pr{x, t} \overset{f.d.d.}{\to} S(x,t),
\end{equation*}
where $\{S(x,t)\}$ is the limiting process with cross-covariances given in \eqref{eq:covS}. For this, it suffices to show
that for all $q_1,q_2 \in \NN$, and $(x_i, t_j)\in [-\infty, \infty] \times [0, 1]$, $i=1, \ldots, q_1; j=1,\dots,q_2$,
\begin{align*}
\left(\widetilde{m}_N\pr{x_{i}, t_j}\right)_{i=1,\dots,q_1;j=1,\dots,q_2}
\overset{\mathcal{D}}{\longrightarrow} 
\left(S(x_i, t_j)\right)_{i=1,\dots,q_1;j=1,\dots,q_2}.
\end{align*}
Recall from \eqref{eq:LandS} and \eqref{eq:tildem} that
\begin{align} \label{eq:fdd2}
\widetilde{m}_N\pr{x_{i}, t_j} 
= 
\frac{1}{\sqrt{N}}\sum_{n=1}^{\lfloor N t_j\rfloor} \sum\limits_{l=\lceil \frac{1}{D}\rceil}^{\infty}\frac{\widetilde{c}_l(x_{i})}{l!}H_l(\xi_n)
\end{align}
and set 
\begin{align} \label{eq:fdd3}
\left(\widetilde{m}_N\pr{x_{i}, t_j}\right)_{\substack{i=1,\dots,q_1 \\ j=1,\dots,q_2}}
=
\left(\frac{1}{\sqrt{N}}\sum_{n=1}^{\lfloor N t_j \rfloor} G_{i} (\xi_n)
\right)_{\substack{i=1,\dots,q_1 \\ j=1,\dots,q_2}} \
\text{ with } 
G_{i}(\cdot) =
\sum\limits_{l=\lceil \frac{1}{D}\rceil}^{\infty}\frac{\widetilde{c}_l(x_{i})}{l!}H_l(\cdot),
\end{align}
such that we have a $q_1\times q_2$-dimensional matrix of normalized partial sums of subordinated Gaussian sequences. In particular, different indices $i$ correspond to different functions $G_i$. 

Given \eqref{eq:ACF} and since the summation in \eqref{eq:fdd3} starts with $l=\lceil \frac{1}{D}\rceil$, all $G_i(\xi_n)$ are short-range dependent in the sense that their autocovariances are absolutely summable as shown in \eqref{eq:seriesACF}. 
Furthermore, let $r_i$ denote the Hermite rank of $G_i$. 
Due to \eqref{eq:fdd3}, we have $\lceil \frac{1}{D}\rceil \leq r_i $
such that $\frac{1}{D} < \lceil \frac{1}{D}\rceil \leq r_i$ since $\frac{1}{D} \notin \NN$.

Then, by Theorem 3 in \cite{bai2013multivariate}, we have 
\begin{align*}
\left(\frac{1}{\sqrt{N}}\sum_{n=1}^{\lfloor N t_j \rfloor} G_{i} (\xi_n)\right)_{\substack{i=1,\dots,q_1 \\ j=1,\dots,q_2}}
\overset{\mathcal{D}}{\longrightarrow}
(\mathcal{G}_1, \dots, \mathcal{G}_{q_1})',
\end{align*}
where $\mathcal{G}_i$, $i=1,\dots,q_1$, are $q_2$-dimensional Gaussian vectors $\mathcal{G}_i=(\mathcal{G}_i(t_1), \dots, \mathcal{G}_i(t_{q_2}))'$ with 
\begin{align}
\Cov ( \mathcal{G}_{i_1}(t_{j_1}), \mathcal{G}_{i_2}(t_{j_2}))
=&
\lim_{N \to \infty}
\frac{1}{N} \sum\limits_{n_1=1}^{\lfloor Nt_{j_1}\rfloor}
\sum\limits_{n_2=1}^{\lfloor Nt_{j_2}\rfloor}\sum\limits_{l_1,l_2=\lceil \frac{1}{D}\rceil}^{\infty}\frac{\widetilde{c}_{l_1}(x_{i_1})\widetilde{c}_{l_2}(x_{i_2})}{l_1!l_2!}\E(H_{l_1}(\xi_{n_1})H_{l_2}(\xi_{n_2})) \nonumber
\\=&
\min(t_{j_1}, t_{j_2})\sum\limits_{l=\lceil \frac{1}{D}\rceil}^{\infty}\frac{\widetilde{c}_l(x_{i_1})\widetilde{c}_l(x_{i_2})}{l!}\sum\limits_{n=-\infty}^{\infty}\gamma^{l}(n),
\label{eq:fdd4}
\end{align}
where \eqref{eq:fdd4} follows by equation (11) in \cite{bai2013multivariate} and since, for $l\geq \lceil \frac{1}{D}\rceil$ and due to \eqref{eq:seriesACF},
\begin{equation*}
\sum_{n \in \ZZ} |\gamma(n)|^l < \infty.
\end{equation*}

\subsection{Tightness} \label{subse:tightness}

Since the object of interest $\widetilde{m}_N\pr{x, t }$ in \eqref{eq:tildem} is a process in two parameters, proving tightness becomes particularly challenging.
We will first give a tightness criterion in $D([-\infty, \infty] \times [0,1])$ and then argue that it suffices to prove tightness in $D([0,1] \times [0,1])$.

In order to prove tightness of $\widetilde{m}_N\pr{x,t}$ in $D([-\infty, \infty] \times [0,1])$, we validate the following tightness criterion: for all $\varepsilon>0$
\begin{equation*}
\lim_{\delta\to 0}\limsup_{N\to \infty}
\PP\pr{\sup_{\substack{\abs{x_2-x_1}<\delta\\x_1,x_2 \in \RR} }
 \sup_{\substack{\abs{t_2-t_1}<\delta\\ 0\leq t_1,t_2\leq 1 }}\abs{
\widetilde{m}_N\pr{x_2,t_2}-\widetilde{m}_N\pr{x_1,t_1}
 }>\varepsilon  }=0;
\end{equation*}
see formula (26) in \cite{elktaibi2016}. In a more general setting, the criterion was introduced in \cite{ivanoff:thefuncD1980}.
We further write
\begin{equation*}
\widetilde{m}_N\pr{x_2,t_2}-\widetilde{m}_N\pr{x_1,t_1}=\widetilde{m}_N\pr{x_2,t_2}-\widetilde{m}_N\pr{x_1,t_2}+\widetilde{m}_N\pr{x_1,t_2}
-\widetilde{m}_N\pr{x_1,t_1}.
\end{equation*}
Then, it suffices to show
\begin{equation} \label{eq:tightness_Mn_part01}
\lim_{\delta\to 0}\limsup_{N\to \infty}
\PP\pr{\sup_{\substack{\abs{x_2-x_1}<\delta\\ x_1,x_2 \in \RR } }
 \sup_{t\in [0,1]}\abs{
\widetilde{m}_N\pr{x_2,t}-\widetilde{m}_N\pr{x_1,t}
 }>\varepsilon  }=0,
\end{equation}
\begin{equation} \label{eq:tightness_Mn_part02}
\lim_{\delta\to 0}\limsup_{N\to \infty}
\PP\pr{\sup_{ x\in \RR }
 \sup_{\substack{\abs{t_2-t_1}<\delta\\ 0\leq t_1,t_2\leq 1 }}\abs{
\widetilde{m}_N\pr{x,t_2}-\widetilde{m}_N\pr{x,t_1}
 }>\varepsilon  }=0.
\end{equation}

For \eqref{eq:tightness_Mn_part01}, note that due to continuity of $F^{-1}$ (following from strict monotonicity and continuity of $F$) for every $\delta >0$, there exists a $\widetilde{\delta}>0$ such that $|x_2-x_1|<\widetilde{\delta}$
implies $|F^{-1}(x_1)-F^{-1}(x_2)|<\delta$.
It then follows that 
\begin{align*}
 & \sup_{\substack{\abs{x_2-x_1}<\widetilde{\delta}\\ x_1,x_2 \in [-\infty, \infty] } }
 \sup_{t\in [0,1]}\abs{
\widetilde{m}_N\pr{x_2,t}-\widetilde{m}_N\pr{x_1,t}
 }\\
&=\sup_{\substack{\abs{x_2-x_1}<\widetilde{\delta}\\ x_1,x_2 \in [-\infty, \infty] } }
 \sup_{t\in [0,1]}\abs{
\widetilde{m}_N\pr{F(F^{-1}(x_2)),t}-\widetilde{m}_N\pr{F(F^{-1}(x_1)),t}
 }\\
  &\leq\sup_{\substack{\abs{x_2-x_1}<\delta\\ x_1,x_2 \in [0, 1] } }
 \sup_{t\in [0,1]}\abs{
\widetilde{m}_N\pr{F(x_2),t}-\widetilde{m}_N\pr{F(x_1),t}
  }\\
  &=\sup_{\substack{\abs{x_2-x_1}<\delta\\ x_1,x_2 \in [0, 1] } }
 \sup_{t\in [0,1]}\abs{
m_N\pr{x_2,t}-m_N\pr{x_1,t}
  }
\end{align*}
and accordingly
\begin{align*}
&
  \PP\pr{\sup_{\substack{\abs{x_2-x_1}<\widetilde{\delta}\\ x_1,x_2 \in [-\infty, \infty] } }
  \sup_{t\in [0,1]}\abs{
  \widetilde{m}_N\pr{x_2,t}-\widetilde{m}_N\pr{x_1,t}
  }>\varepsilon   }
\\&\leq
  \PP\pr{\sup_{\substack{\abs{x_2-x_1}<\delta\\ x_1,x_2 \in [0, 1] } }
  \sup_{t\in [0,1]}\abs{
  m_N\pr{x_2,t}-m_N\pr{x_1,t}
  }>\varepsilon   }.
\end{align*}

For \eqref{eq:tightness_Mn_part02}, note that due to $F:[-\infty, \infty]\longrightarrow [0,1 ]$ being a bijective function and due to $m_N(x,t)=\widetilde{m}_N(F^{-1}(x), t)$ with $m_N(x,t)$ as in \eqref{eq:mN:mainbody},
\begin{align*}
 & \lim_{\delta\to 0}\limsup_{N\to \infty}
\PP\pr{\sup_{ x\in [-\infty, \infty]}
 \sup_{\substack{\abs{t_2-t_1}<\delta\\ 0\leq t_1,t_2\leq 1 }}\abs{
\widetilde{m}_N\pr{x,t_2}-\widetilde{m}_N\pr{x,t_1}
  }>\varepsilon   }\\
  &=\lim_{\delta\to 0}\limsup_{N\to \infty}
\PP\pr{\sup_{ x\in [0, 1] }
 \sup_{\substack{\abs{t_2-t_1}<\delta\\ 0\leq t_1,t_2\leq 1 }}\abs{
\widetilde{m}_N\pr{F^{-1}(x),t_2}-\widetilde{m}_N\pr{F^{-1}(x),t_1}
  }>\varepsilon   }\\
   &=\lim_{\delta\to 0}\limsup_{N\to \infty}
\PP\pr{\sup_{ x\in [0, 1] }
 \sup_{\substack{\abs{t_2-t_1}<\delta\\ 0\leq t_1,t_2\leq 1 }}\abs{
m_N\pr{x,t_2}-m_N\pr{x,t_1}
  }>\varepsilon   }.
\end{align*}

It follows that the criteria \eqref{eq:tightness_Mn_part01} and \eqref{eq:tightness_Mn_part02} can be reformulated as
\begin{align}
\lim_{\delta\to 0}\limsup_{N\to \infty}
\PP\pr{\sup_{\substack{\abs{x_2-x_1}<\delta\\ 0\leq x_1,x_2\leq 1} }
 \sup_{t\in [0,1]}\abs{
m_N\pr{x_2,t}-m_N\pr{x_1,t}
  }>\varepsilon   } = 0 \label{eq:tightness_Mn_part1},
\end{align}
\begin{align}
\lim_{\delta\to 0}\limsup_{N\to \infty}
\PP\pr{\sup_{ x\in [0, 1] }
 \sup_{\substack{\abs{t_2-t_1}<\delta\\ 0\leq t_1,t_2\leq 1 }}\abs{
m_N\pr{x,t_2}-m_N\pr{x,t_1}
  }>\varepsilon   } = 0 \label{eq:tightness_Mn_part2}.
\end{align}
\ignore{
\begin{equation} \label{eq:tightness_Mn_part01}
\lim_{\delta\to 0}\limsup_{N\to \infty}
\PP\pr{\sup_{\substack{\abs{x_2-x_1}<\delta\\ x_1,x_2 \in [-\infty, \infty] } }
 \sup_{t\in [0,1]}\abs{
\widetilde{m}_N\pr{x_2,t}-\widetilde{m}_N\pr{x_1,t}
  }>\varepsilon   }=0,
\end{equation}
and
\begin{equation} \label{eq:tightness_Mn_part02}
\lim_{\delta\to 0}\limsup_{N\to \infty}
\PP\pr{\sup_{ x\in [-\infty, \infty]s }
 \sup_{\substack{\abs{t_2-t_1}<\delta\\ 0\leq t_1,t_2\leq 1 }}\abs{
\widetilde{m}_N\pr{x,t_2}-\widetilde{m}_N\pr{x,t_1}
  }>\varepsilon   }=0.
\end{equation}
In order to prove \eqref{eq:tightness_Mn_part01} and \eqref{eq:tightness_Mn_part02}, note that since $F$ is strictly monotone 
\begin{align*}
\widetilde{c}_l(x)=\E\pr{\1_{\{X_0\leq x\}}H_l(\xi_0)}=\E\pr{\1_{\{F(X_0)\leq F(x)\}}H_l(\xi_0)}=c_l(F(x)).
\end{align*}
where $c_l(x)=\E\pr{\1_{\{F(X_0)\leq x\}}H_l(\xi_0)}$. Recall from \eqref{eq:Wickel}, that
\begin{equation} \label{eq:mN:mainbody}
m_N\pr{x, t }= \frac{1}{\sqrt{N}}\sum_{n=1}^{\lfloor Nt\rfloor} \sum\limits_{l=\lceil \frac{1}{D}\rceil}^{\infty}\frac{c_l(x)}{l!}H_l(\xi_n), 
\end{equation}
Since the density $f\in L^p$, $p>1$, $F$ is H\"older continuous and it follows that the criteria \eqref{eq:tightness_Mn_part01} and \eqref{eq:tightness_Mn_part02} can be reformulated as
\begin{align}
&\lim_{\delta\to 0}\limsup_{N\to \infty}
\PP\pr{\sup_{\substack{\abs{x_2-x_1}<\delta\\ x_1,x_2 \in \RR } }
 \sup_{t\in [0,1]}\abs{
\widetilde{m}_N\pr{x_2,t}-\widetilde{m}_N\pr{x_1,t}
  }>\varepsilon   } \nonumber
\\&=
\lim_{\delta\to 0}\limsup_{N\to \infty}
\PP\pr{\sup_{\substack{\abs{x_2-x_1}<\delta\\ 0\leq x_1,x_2\leq 1} }
 \sup_{t\in [0,1]}\abs{
m_N\pr{x_2,t}-m_N\pr{x_1,t}
  }>\varepsilon   } = 0 \label{eq:tightness_Mn_part1},
\intertext{and}
&\lim_{\delta\to 0}\limsup_{N\to \infty}
\PP\pr{\sup_{ x\in \RR }
 \sup_{\substack{\abs{t_2-t_1}<\delta\\ 0\leq t_1,t_2\leq 1 }}\abs{
\widetilde{m}_N\pr{x,t_2}-\widetilde{m}_N\pr{x,t_1}
  }>\varepsilon   } \nonumber
\\&=
\lim_{\delta\to 0}\limsup_{N\to \infty}
\PP\pr{\sup_{ x\in [0, 1] }
 \sup_{\substack{\abs{t_2-t_1}<\delta\\ 0\leq t_1,t_2\leq 1 }}\abs{
m_N\pr{x,t_2}-m_N\pr{x,t_1}
  }>\varepsilon   } = 0 \label{eq:tightness_Mn_part2}.
\end{align}
}

We consider \eqref{eq:tightness_Mn_part1} and \eqref{eq:tightness_Mn_part2} separately. Both proofs are based on chaining techniques following the ideas in
\citet[p.\ 1778]{dehling1989} and \citet[Section 5.1.4]{betken2020change}.

\subsubsection{Proof of \eqref{eq:tightness_Mn_part1}.}
In order to prove \eqref{eq:tightness_Mn_part1}, we apply a chaining technique. For this, we define the intervals
\begin{align*}
I_{1,p}\defeq[2p\delta, 2(p+1)\delta] \ \text{ and } \ I_{2, p}\defeq[(2p+1)\delta, (2(p+1)+1)\delta]
\end{align*}
for $p=0, \ldots, L_{\delta}\defeq\lceil \frac{1}{2\delta}-\frac{3}{2} \rceil$.
Then, the expression inside $\PP$ in \eqref{eq:tightness_Mn_part1} can be bounded as
\begin{align}
&\sup_{\substack{\abs{x_2-x_1}<\delta\\ 0 \leq x_1,x_2 \leq 1 } }
 \sup_{t\in [0,1]}\abs{
m_N\pr{x_2,t}-m_N\pr{x_1,t}
  } \nonumber
\\&\leq
\max\limits_{0\leq p\leq L_{\delta}}\sup_{\substack{x_1, x_2\in I_{1, p}\\ } }
 \sup_{t\in [0,1]}\abs{
m_N\pr{x_2,t}-m_N\pr{x_1,t}
  } \nonumber
\\& \hspace{1cm} +
\max\limits_{0\leq p\leq L_{\delta}}\sup_{\substack{x_1, x_2\in I_{2, p}\\ } }
 \sup_{t\in [0,1]}\abs{
m_N\pr{x_2,t}-m_N\pr{x_1,t}
  }.\label{eq:decomposition-1}
\end{align}
In the following, we consider only the first summand in \eqref{eq:decomposition-1}, since for the second summand analogous considerations hold. For this reason, it remains to show that
\begin{equation*}
\lim_{\delta\to 0}\limsup_{N\to \infty}
\PP\pr{\max\limits_{0\leq p\leq L_{\delta}}\sup_{\substack{x_1, x_2\in I_{1, p}\\ } }
 \sup_{t\in [0,1]}\abs{
m_N\pr{x_2,t}-m_N\pr{x_1,t}
  }>\varepsilon   }=0.
\end{equation*}
For this, it suffices to show that
\begin{align*}
\lim_{\delta\to 0}\limsup_{N\to \infty}\frac{1}{\delta}
\max_{0\leq p\leq L_\delta}
\PP\pr{\sup_{\substack{x_1, x_2\in I_{1, p}\\ } }
 \sup_{t\in [0,1]}\abs{
m_N\pr{x_2,t}-m_N\pr{x_1,t}
  }>\varepsilon   }=0.
\end{align*}
We write $I_{1, p}=[a_p, a_{p+1}]$, i.e., $a_p\defeq 2p\delta$ and $a_{p+1}\defeq 2(p+1)\delta$.
Note that
\begin{align} \label{eq:2supsup1}
\sup_{\substack{x_1, x_2\in I_{1, p}\\ } }
 \sup_{t\in [0,1]}\abs{
m_N\pr{x_2,t}-m_N\pr{x_1,t}
  }\leq 2\sup\limits_{x\in [0, 2\delta]} \sup_{t\in [0,1]}\abs{
m_N\pr{a_p,t}-m_N\pr{a_p+x,t}
  }.
\end{align}
Define refining partitions $x_i(k)$ for $k=0, \ldots, K_N$ with $K_N\to\infty$, for $N\to\infty$, and 
\begin{align} \label{eq:refiningpartitions00}
x_i(k)\defeq a_{p}+\frac{i}{2^k}2\delta, \hspace{0.2cm} i=0, \ldots, 2^k,
\end{align}
and choose $i_k(x)$ such that
\begin{align*} 
a_{p}+x\in \left( x_{i_k(x)}(k), x_{i_k(x)+1}(k)\right].
\end{align*}
We write 
\begin{equation} \label{eq:mbar}
\widebar{m}_{N,b}(x, y)\defeq\sup_{t\in [0, b]}\left|m_N(y, t)-m_N(x, t)\right|,
\hspace{0.2cm}
\widebar{m}_{N}(x, y)\defeq \widebar{m}_{N,1}(x, y).
\end{equation}
Then, with help of the introduced partition \eqref{eq:refiningpartitions00}, \eqref{eq:2supsup1} can be bounded as
\begin{align}
&\sup\limits_{t\in [0,1 ]}\abs{m_N(a_p, t)-m_N(a_p+x, t)} \nonumber\\
&\leq 
\sum\limits_{k=1}^{K_N} \widebar{m}_N(x_{i_{k}(x)}(k), x_{i_{k-1}(x)}(k-1)) + \widebar{m}_N(x_{i_{K_N}(x)}(K_N), a_p+x).
\label{eq:summandspart}
\end{align}
Consequently, \eqref{eq:summandspart} can be used to infer \eqref{eq:twoprobs-1} below
\begin{align}
&\PP\pr{\sup\limits_{x\in [0, 2\delta]}\sup\limits_{t\in [0,1 ]}\abs{m_N(a_p, t)-m_N(a_p+x, t)}>\varepsilon}\notag
\\&\leq
\sum\limits_{k=1}^{K_N} \PP\left(\sup\limits_{x\in [0, 2\delta]}\widebar{m}_N(x_{i_{k}(x)}(k), x_{i_{k-1}(x)}(k-1)) >\frac{\varepsilon}{(k+3)^2}\right)\notag\\
&\hspace{1cm}+ \PP\left(\sup\limits_{x\in [0, 2\delta]}\widebar{m}_N(x_{i_{K_N}(x)}(K_N), a_p+x)>\varepsilon-\sum\limits_{k=0}^{\infty}\frac{\varepsilon}{(k+3)^2}\right) \label{eq:twoprobs-1}
\\&\leq
\sum\limits_{k=1}^{K_N}\sum\limits_{i=0}^{2^k-1} \PP\left(\widebar{m}_N(x_{i+1}(k), x_{i}(k)) >\frac{\varepsilon}{(k+3)^2}
\right)\notag\\
&\hspace{1cm}+ \PP\left(\sup\limits_{x\in [0, 2\delta]}\widebar{m}_N(x_{i_{K_N}(x)}(K_N), a_p+x)>\frac{\varepsilon}{2}\right) \label{eq:twoprobs},
\end{align}
since $\sum\limits_{k=0}^{\infty}\frac{\varepsilon}{(k+3)^2}\leq \frac{\varepsilon}{2}$.

Throughout all following arguments, $C$ is a generic constant that can change upon each appearance. 
We consider the two probabilities in \eqref{eq:twoprobs} separately. 
The first one can be dealt with as follows:
\begin{align} 
&
 \sum\limits_{k=1}^{K_N}\sum\limits_{i=0}^{2^k-1} \PP\left(\widebar{m}_N(x_{i+1}(k), x_{i}(k)) >\frac{\varepsilon}{(k+3)^2}
\right) \nonumber \\
&\leq C \sum\limits_{k=1}^{K_N}\sum\limits_{i=0}^{2^k-1} \frac{(k+3)^8}{\varepsilon^4} 
\left( \frac{1}{N^{\theta}} \left(x_{i+1}(k)-x_i(k)\right) + \left(x_{i+1}(k)-x_i(k)\right)^{\frac{3}{2}} \right)
\label{eq:111} \\
&= C \sum\limits_{k=1}^{K_N}\sum\limits_{i=0}^{2^k-1} \frac{(k+3)^8}{\varepsilon^4}
\left( \frac{1}{N^{\theta}} \frac{2\delta}{2^k} + \left(\frac{2\delta}{2^k}\right)^{\frac{3}{2}} \right)
\label{eq:112}\\
&\leq 
C \delta \sum\limits_{k=1}^{K_N} \frac{(k+3)^8}{\varepsilon^4} \frac{1}{N^{\theta}}
+
C \delta^{\frac{3}{2}}\sum\limits_{k=1}^{K_N} \frac{(k+3)^8}{\varepsilon^4}\left(\frac{1}{2^k}\right)^{\frac{1}{2}} 
\notag\\
&\leq C \delta^{\frac{3}{2}} \label{eq:113}
\end{align}
for sufficiently large $N$, where
\eqref{eq:111} follows from Lemma \ref{le:bill} with $b=1$ and \eqref{eq:112} is a consequence of the choice of our partition in \eqref{eq:refiningpartitions00}. 
The last inequality \eqref{eq:113} is then satisfied for large enough $N$ since $\sum_{k=1}^{\infty} \frac{(k+3)^8}{\varepsilon^4}\left(\frac{1}{2^k}\right)^{\frac{1}{2}} < \infty$ by the ratio test for the convergence of series and
$\sum_{k=1}^{K_N} \frac{(k+3)^8}{\varepsilon^4} \frac{1}{N^\theta} \sim K_N^9 \frac{1}{N^\theta} \rightarrow 0$ choosing $K_N$ such that $K_N^9=o\left(N^\theta\right)$.

Now, we consider the second summand in \eqref{eq:twoprobs}. Choosing $K_{N}$ such that $K_N^9=o\left(N^{\theta}\right)$, but $\frac{K_{N}}{ \log_{2}(N) 
} \to\infty$, we get
\begin{equation*}
\lim_{\delta\to 0}\limsup_{N \to \infty}\PP\left(\sup\limits_{x\in [0, 2\delta]}\widebar{m}_N(x_{i_{K_N}(x)}(K_N), a_p+x)>\frac{\varepsilon}{2}\right)
\leq \lim_{\delta\to 0}\limsup_{N \to \infty}
\frac{C}{\varepsilon^4} 
\max\left\{ \frac{1}{N^{\frac{1}{2}}} , 
\frac{N}{2^{K_N \frac{1}{2}}} \right\}
=
0
\end{equation*}
for all $N \geq N_{\varepsilon}$ by applying Lemma \ref{le:randterm} below with $a=2\delta$, $b=1$ and $c=a_{p}$.

\begin{proof}[Proof of \eqref{eq:tightness_Mn_part2}]
In order to prove \eqref{eq:tightness_Mn_part2}, we first split the interval over $t_1,t_2$ in \eqref{eq:tightness_Mn_part2} into subintervals. This allows to bound the quantity of interest in terms of a supremum over a single parameter $t$ in a specific interval.
We then apply a similar chaining technique as in the proof of \eqref{eq:tightness_Mn_part1}. Note that here the chaining is applied to $x \in [0,1]$.

To deal with the supremum over $t_1,t_2$ in \eqref{eq:tightness_Mn_part2}, define
\begin{align*}
I_{1,p}\defeq[2p\delta, 2(p+1)\delta] \ \text{ and } \ I_{2, p}\defeq[(2p+1)\delta, (2(p+1)+1)\delta]
\end{align*}
for $p=0, \ldots, L_{\delta}\defeq\lceil \frac{1}{2\delta}-\frac{3}{2} \rceil$.
We first note that the expression in $\PP$ in \eqref{eq:tightness_Mn_part2} can be bounded through
\begin{align}
  \sup_{ 0\leq x \leq 1 }
  \sup_{\substack{\abs{t_2-t_1}<\delta\\ 0\leq t_1,t_2\leq 1 }}\abs{ m_N\pr{x,t_2}-m_N\pr{x,t_1}}
&\leq
  \sup_{ 0\leq x \leq 1 }\max_{0\leq p \leq L_{\delta}}
  \sup_{t_1, t_2\in I_{1, p} }\abs{m_N\pr{x,t_2}-m_N\pr{x,t_1}}
\notag\\
&\hspace{1cm}+
  \sup_{ 0\leq x \leq 1 }\max_{0\leq p \leq L_{\delta}}
  \sup_{t_1, t_2\in I_{2, p} }\abs{m_N\pr{x,t_2}-m_N\pr{x,t_1}}.\label{eq:decomposition-2}
\end{align}
In the following, we consider only the first summand in \eqref{eq:decomposition-2}, since for the second summand analogous considerations hold. For this reason, it remains to show that
\begin{align*}
\lim_{\delta\to 0}\limsup_{N\to \infty}
\PP\pr{ \sup_{ 0\leq x \leq 1 }\max_{0\leq p \leq L_{\delta}}
 \sup_{t_1, t_2\in I_{1, p} }\abs{
m_N\pr{x,t_2}-m_N\pr{x,t_1}}>\varepsilon   }=0.
\end{align*}
We write $I_{1, p}=[a_p, a_{p+1}]$, i.e., $a_p\defeq 2p\delta$ and $a_{p+1}\defeq 2(p+1)\delta$.
Note that
\begin{align}
\sup_{t_1, t_2\in I_{1, p} }\abs{
m_N\pr{x,t_2}-m_N\pr{x,t_1}}
\nonumber
&\leq \sup_{ t_2\in I_{1, p} }\abs{
m_N\pr{x,t_2}-m_N\pr{x,a_p}}
\\&\hspace{1cm}+
\sup_{t_1\in I_{1, p} }\abs{
m_N\pr{x,a_p}-m_N\pr{x,t_1}}
\nonumber \\
&\leq 2\sup_{t\in [0, 2\delta] }\abs{
m_N\pr{x,a_p}-m_N\pr{x,a_p+t}}.
\label{al:aaassspppooo}
\end{align}

For the supremum over $x\in [0,1]$, we apply a similar chaining technique as in the proof of \eqref{eq:tightness_Mn_part1}. 
Define refining partitions $x_i(k)$ for $k=0, \ldots, K_N$ with $K_N\to\infty$, for $N\to\infty$, and 
\begin{align} \label{eq:refiningpartitions11}
x_i(k)=\frac{i}{2^k}, \ i=0, \ldots, 2^k,
\end{align}
and choose $i_k(x)$ such that
\begin{align*}
x\in \left( x_{i_k(x)}(k), x_{i_k(x)+1}(k)\right].
\end{align*}

Moreover, define 
\begin{equation} \label{eq:notation_mns}
  \begin{aligned}
    m_{N}(x, t, p)\defeq m_N\pr{x,a_p}-m_N\pr{x,a_p+t}
    \\
    m_{N}(x, y, t, p)\defeq m_{N}(y, t, p)-m_{N}(x, t, p).
  \end{aligned}
\end{equation}
Then, continuing with \eqref{al:aaassspppooo}, it follows that
\begin{align*}
&\max_{0\leq p \leq L_{\delta}} 
\sup_{t\in [0, 2\delta] }\abs{
m_N\pr{x,a_p}-m_N\pr{x,a_p+t}}\\
&=
\max_{0\leq p \leq L_{\delta}} 
\sup_{t\in [0, 2\delta] }\abs{m_{N}(x, t, p)}
\\
& = \max_{0\leq p \leq L_{\delta}} 
\sup_{t\in [0, 2\delta] }\abs{m_{N}(0, t, p)-m_{N}(x, t, p)},
\end{align*}
since $c_l(0)=\E\pr{\1_{\{F(X_0)\leq 0\}}H_l(\xi_0)}=0$. We have
\begin{align}
&\max_{0\leq p \leq L_{\delta}} 
\sup_{t\in [0, 2\delta] }\abs{m_{N}(0, t, p)-m_{N}(x, t, p)}
\notag
\\&\leq 
\sum\limits_{k=1}^{K_N} \max_{0\leq p \leq L_{\delta}} 
\sup_{t\in [0, 2\delta] }\abs{m_{N}
(x_{i_{k}(x)}(k), x_{i_{k-1}(x)}(k-1), t, p)} \notag
\\&\hspace{1cm}+ 
\max_{0\leq p \leq L_{\delta}} 
\sup_{t\in [0, 2\delta] }\abs{m_{N}(x_{i_{K_N}(x)}(K_N), x, t, p)}
\notag \\
&=:
\sum\limits_{k=1}^{K_N} \widebar{m}_N
(x_{i_{k}(x)}(k), x_{i_{k-1}(x)}(k-1)) + 
\widebar{m}_N(x_{i_{K_N}(x)}(K_N), x),
\label{eq:summandspartXX}
\end{align}
where $\widebar{m}_N(x, y):=\max_{0\leq p \leq L_{\delta}} 
\sup_{t\in [0, 2\delta] }\abs{m_{N}
(x, y, t, p)}$. 
Consequently, \eqref{eq:summandspartXX} can be used to infer \eqref{al:border_terms-1} below
\begin{align}
&\PP\pr{\sup\limits_{x\in [0, 1]}\max_{0\leq p \leq L_{\delta}} 
\sup_{t\in [0, 2\delta] }\abs{m_{N}(0, t, p)-m_{N}(x, t, p)}>\varepsilon}\notag
\\&\leq
\sum\limits_{k=1}^{K_N} \PP\left(\sup\limits_{x\in [0, 1]}\widebar{m}_N(x_{i_{k}(x)}(k), x_{i_{k-1}(x)}(k-1)) >\frac{\varepsilon}{(k+3)^2}\right)\notag
\\& \hspace{1cm} + \PP\left(\sup\limits_{x\in [0, 1]}\widebar{m}_N(x_{i_{K_N}(x)}(K_N), x)>\varepsilon-\sum\limits_{k=0}^{\infty}\frac{\varepsilon}{(k+3)^2}\right) \label{al:border_terms-1}
\\&\leq
\sum\limits_{k=1}^{K_N}\sum\limits_{i=0}^{2^k} \PP\left(\widebar{m}_N(x_{i+1}(k), x_{i}(k)) >\frac{\varepsilon}{(k+3)^2} \right) \notag
\\& \hspace{1cm} + \PP\left(\sup\limits_{x\in [0, 1]}\widebar{m}_N(x_{i_{K_N}(x)}(K_N), x)>\frac{\varepsilon}{2}\right),\label{al:border_terms}
\end{align}
since $\sum_{k=0}^{\infty}\frac{\varepsilon}{(k+3)^2}\leq \frac{\varepsilon}{2}$. We consider the two summands in \eqref{al:border_terms} separately. For the first summand in \eqref{al:border_terms} we need some preliminary results. Note that for any $\eta >0$, 
\begin{align}
 \PP\left(\widebar{m}_N(x_{i+1}(k), x_{i}(k)) >\eta \right)
 &=\PP\left( \max_{0\leq p \leq L_{\delta}} \sup_{t\in [0, 2\delta] }\left|m_{N}
(x_{i+1}(k), x_{i}(k), t, p)\right|>\eta\right)
\nonumber
\\
&\leq\sum\limits_{p=0}^{L_{\delta}}\PP\left( \sup_{t\in [0, 2\delta] }\left|m_{N}
(x_{i+1}(k), x_{i}(k), t, p)\right|>\eta\right).
\label{eq:opopopopopopopopo0001}
\end{align}
Due to stationarity it follows that
\begin{align}
\PP\left( \sup_{t\in [0, 2\delta] }\left|m_{N}
(x_{i+1}(k), x_{i}(k), t, p)\right|>\eta\right)
=\PP\left( \sup_{t\in [0, 2\delta] }\left|m_{N}
(x_{i+1}(k), x_{i}(k), t, 0)\right|>\eta\right).
\label{eq:opopopopopopopopo0002}
\end{align}
Combining \eqref{eq:opopopopopopopopo0001} and \eqref{eq:opopopopopopopopo0002}, we get
\begin{align} \label{eq:opopopopopopopopo0003}
 \PP\left(\widebar{m}_N(x_{i+1}(k), x_{i}(k)) >\eta \right)
 \leq \frac{1}{\delta}\PP\left( \sup_{t\in [0, 2\delta] }\left| m_{N}
(x_{i+1}(k), x_{i}(k), t, 0)\right|>\eta\right).
\end{align}
Due to the notation in \eqref{eq:notation_mns}, we have
\begin{align*}
&m_{N}
(x_{i+1}(k), x_{i}(k), t, 0)\\
&=
m_{N}(x_{i}(k), t, 0)-m_{N}(x_{i+1}(k), t, 0)\\
&=
m_N\pr{x_{i}(k),0}-m_N\pr{x_{i}(k),t}
-(m_{N}(x_{i+1}(k), 0)-m_{N}(x_{i+1}(k), t))
\end{align*}
such that
\begin{align*}
 \sup_{t\in [0, 2\delta] }\abs{m_{N}
(x_{i+1}(k), x_{i}(k), t, 0)}
\leq 2\sup_{t\in [0, 2\delta] }\abs{m_{N}(x_{i+1}(k), t)-m_N\pr{x_{i}(k),t}}.
\end{align*}
We can then bound the first summand in \eqref{al:border_terms}, with further explanations given below, as follows
\begin{align}
&\sum\limits_{k=1}^{K_N}\sum\limits_{i=0}^{2^k} \PP\left(\widebar{m}_{N}(x_{i+1}(k), x_{i}(k)) >\frac{\varepsilon}{(k+3)^2} \right)
\notag \\ 
&\leq
\sum\limits_{k=1}^{K_N}\sum\limits_{i=0}^{2^k}\frac{1}{\delta}
P\left( 2\sup_{t\in [0, 2\delta] }\abs{m_{N}(x_{i+1}(k), t)-m_N\pr{x_{i}(k),t}}>\frac{\varepsilon}{(k+3)^2}\right)
\label{eq:21100}
\\
&\leq C \frac{1}{\delta}\sum\limits_{k=1}^{K_N}\sum\limits_{i=0}^{2^k} \frac{(k+3)^8}{\varepsilon^4}
\left( \delta^{2-\theta} \frac{1}{N^{\theta}} \left(x_{i+1}(k)-x_i(k)\right) + \delta^{2} \left(x_{i+1}(k)-x_i(k)\right)^{\frac{3}{2}} \right)
\label{eq:211}\\
&\leq \frac{1}{\delta}C \sum\limits_{k=1}^{K_N}\sum\limits_{i=0}^{2^k} \frac{(k+3)^8}{\varepsilon^4}
\left( \delta^{2-\theta} \frac{1}{N^{\theta}} \frac{1}{2^k} + \delta^{2} \left(\frac{1}{2^k}\right)^{\frac{3}{2}} \right)
\label{eq:212}\\
&\leq 
C \frac{1}{\delta}\delta^{2-\theta} \sum\limits_{k=1}^{K_N} \frac{(k+3)^8}{\varepsilon^4} \frac{1}{N^{\theta}}
+
C \frac{1}{\delta}\delta^2\sum\limits_{k=1}^{K_N} \frac{(k+3)^8}{\varepsilon^4}\left(\frac{1}{2^k}\right)^{\frac{1}{2}} 
\notag \\
&\leq C \max\{ \delta^{1-\theta}, \delta \} \label{eq:213}
\end{align}
for sufficiently large $N$, where \eqref{eq:21100} is due to 
\eqref{eq:opopopopopopopopo0003}
and \eqref{eq:211} follows from Lemma \ref{le:bill} with $b = \delta$, \eqref{eq:212} is a consequence of the choice of our partition in \eqref{eq:refiningpartitions11}. 
The last inequality \eqref{eq:213} is then satisfied for large enough $N$ since $\sum_{k=1}^{\infty} \frac{(k+3)^8}{\varepsilon^4}\left(\frac{1}{2^k}\right)^{\frac{1}{2}} < \infty$ by the ratio test for the convergence of series and
$\sum_{k=1}^{K_N} \frac{(k+3)^8}{\varepsilon^4} \frac{1}{N^{\theta}} \sim K_N^9 \frac{1}{N^{\theta}} \rightarrow 0$, as $N\rightarrow \infty$, choosing $K_N$ such that $K_N^9=o\left(N^{\theta}\right)$.

Now, we consider the second summand in \eqref{al:border_terms}. 
\begin{align*}
 &
 \PP\left(\sup\limits_{x\in [0, 1]}\text{$\widebar{m}_{N}$}(x_{i_{K_N}(x)}(K_N), x)>\frac{\varepsilon}{2}\right)\\
 &=
 \PP\left(\sup\limits_{x\in [0, 1]}\max_{0\leq p \leq L_{\delta}} 
\sup_{t\in [0, 2\delta] }\abs{m_{N}(x_{i_{K_N}(x)}(K_N), x, t, p)}>\frac{\varepsilon}{2}\right)\\
&\leq
\frac{1}{\delta}
 \PP\left(\sup\limits_{x\in [0, 1]}
\sup_{t\in [0, 2\delta] }\abs{m_{N}(x_{i_{K_N}(x)}(K_N), x, t, 0)}>\frac{\varepsilon}{2}\right)\\
&\leq 
\frac{1}{\delta}
 \PP\left(2\sup\limits_{x\in [0, 1]}
\sup\limits_{t\in [0, 2\delta]}\left|m_{N}(x_{i_{K_N}(x)}(K_N), t)-m_{N}(x, t)\right|>\frac{\varepsilon}{2}\right).
\end{align*}
Choosing $K_{N}$ such that $K_N^9=o\left(N^{\theta}\right)$, but $\frac{K_{N}}{ \log_{2}(N)} \to\infty$, we get
\begin{equation*}
\frac{1}{\delta}\PP\left(2\sup\limits_{x\in [0, 1]}\widebar{m}_{N}(x_{i_{K_N}(x)}(K_N), x)>\frac{\varepsilon}{2}\right)
\leq 
\frac{C}{\varepsilon^4}\delta^{\frac{1}{2}} 
\max\left\{ \frac{1}{N^{\frac{1}{2}}} , 
\frac{N}{2^{K_N \frac{1}{2}}} \right\}
\to 0 
\hspace{0.2cm} 
\text{ as }
N \to \infty,
\end{equation*}
for all $N \geq N_{\varepsilon}$ by applying Lemma \ref{le:randterm} below with $a=1$, $b=\delta$ and $c=0$.
\end{proof}

\section{Technical results and their proofs}
\label{se:appendixB}

In this section, we provide some technical results and their proofs.

\begin{lemma} \label{le:bill}
Let $\widebar{m}_{N,b}$ be as in 
\eqref{eq:mbar}. Then, there are constants $C_{1}, C_{2}>0$ and a $\theta \in (0,\frac{1}{2}]$ such that for any $\lambda>0$,
\begin{equation} \label{eq:le:bound}
\PP\left(\widebar{m}_{N,b}(x,y) >\lambda \right)
\leq 
C_{1} \frac{1}{\lambda^4} b^{2-\theta} \frac{1}{N^{\theta}} \left(y-x\right)
+
C_{2} \frac{1}{\lambda^4} b^2 \left(y-x\right)^{\frac{3}{2}}
\end{equation}
for any $b>0$ and all $x,y \in [0,1]$ with $y>x$.
\end{lemma}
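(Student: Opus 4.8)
\textbf{Proof plan for Lemma \ref{le:bill}.}

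The plan is to reduce the bound on $\widebar{m}_{N,b}(x,y)=\sup_{t\in[0,b]}|m_N(y,t)-m_N(x,t)|$ to a statement about maxima of partial sums of the stationary sequence $U_n\defeq\sum_{l=\lceil 1/D\rceil}^{\infty}\frac{c_l(y)-c_l(x)}{l!}H_l(\xi_n)$, and then invoke a Billingsley-type maximal inequality (the generalized version announced as Lemma \ref{le:billmod12.2}, an extension of Theorem 12.2 in \cite{billingsley:1968}) together with a moment bound on the increments of the partial sums. Concretely, observe that
\begin{equation*}
m_N(y,t)-m_N(x,t)=\frac{1}{\sqrt N}\sum_{n=1}^{\lfloor Nt\rfloor}U_n,
\end{equation*}
so that $\widebar{m}_{N,b}(x,y)=\frac{1}{\sqrt N}\max_{1\le j\le \lfloor Nb\rfloor}\bigl|\sum_{n=1}^{j}U_n\bigr|$ (the supremum over $t\in[0,b]$ is attained at the jump points $t=j/N$). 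The first step is therefore to establish a fourth-moment estimate of the form
\begin{equation*}
\E\Bigl[\Bigl(\sum_{n=j+1}^{k}U_n\Bigr)^{4}\Bigr]\le g(y)-g(x)+\bigl(h(y)-h(x)\bigr),\qquad \text{for }0\le j\le k\le \lfloor Nb\rfloor,
\end{equation*}
with suitable nondecreasing functions: one term behaving like $(k-j)^{2}(y-x)$ and one like $(k-j)^{2}(y-x)^{3/2}$. This is where the structure of the higher-order terms enters. Since each $H_l(\xi_n)$ with $l\ge\lceil 1/D\rceil$ is short-range dependent by \eqref{eq:seriesACF}, the fourth cumulant/moment of the block sum is controlled by products of covariances $\gamma^{l}$, which are summable; using the diagram formula for moments of Hermite polynomials and the elementary bound $\sum_{l\ge\lceil 1/D\rceil}\frac{(c_l(y)-c_l(x))^2}{l!}\le \E\bigl((\1_{\{F(X_0)\le y\}}-\1_{\{F(X_0)\le x\}})^2\bigr)=y-x$ (the same argument used in Section \ref{subse:fdd}), one extracts the factor $(y-x)$. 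The exponent $\tfrac32$ on the second term will come from interpolating this $L^2$-type bound with the crude $L^1$-bound $|y-x|$ and from the slack between $r$ and $\lceil 1/D\rceil$; this is what produces the parameter $\theta\in(0,\tfrac12]$, essentially $\theta=\tfrac12(\lceil 1/D\rceil D-1)$ or a similar expression measuring how far $\lceil 1/D\rceil$ exceeds $1/D$.

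Next I would feed this moment bound into Lemma \ref{le:billmod12.2}. Writing $S_j=\sum_{n=1}^{j}U_n$ and $M_n=\max_{1\le j\le n}|S_j|$, the generalized Billingsley inequality gives, for $\lambda>0$,
\begin{equation*}
\PP\bigl(M_{\lfloor Nb\rfloor}>\lambda\sqrt N\bigr)\le \frac{C}{\lambda^{4}N^{2}}\,\Bigl(\bigl(\text{``}(Nb)^{2}N^{-\theta}(y-x)\text{''}\bigr)+\bigl(\text{``}(Nb)^{2}(y-x)^{3/2}\text{''}\bigr)\Bigr),
\end{equation*}
where the exact form of the right-hand side is dictated by the superadditive majorant built from $g$ and $h$ above. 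Collecting the powers of $N$ and $b$: the first term yields $\frac{C}{\lambda^{4}}\,b^{2}N^{-\theta}(y-x)$ and the second $\frac{C}{\lambda^{4}}\,b^{2}(y-x)^{3/2}$, which is exactly \eqref{eq:le:bound} (the $b^{2-\theta}$ in the statement being $b^{2}\cdot b^{-\theta}$ reorganized; I will match constants carefully). The normalization by $1/\sqrt N$ is what converts $(Nb)^2/N^2=b^2$, so no hidden $N$-dependence survives.

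The main obstacle is the fourth-moment estimate under long-range dependence: although $U_n$ is short-range dependent as a sequence, the underlying $\xi_n$ is not, the summands $H_l(\xi_n)$ are unbounded, and the infinite series over $l$ must be handled uniformly. Verifying the hypotheses of Lemma \ref{le:billmod12.2} — in particular producing a genuinely superadditive function of the index-block $(j,k]$ that simultaneously carries the correct $(y-x)$ and $(y-x)^{3/2}$ dependence — requires a careful diagram-formula computation combined with the $L^2$-bound on the Hermite coefficients and an interpolation argument to gain the extra power $N^{-\theta}$ from the gap between $r$ and $\lceil 1/D\rceil$. Once that estimate is in hand, the passage through the maximal inequality and the bookkeeping of powers of $N$ and $b$ is routine.
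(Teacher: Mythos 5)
Your overall architecture is the same as the paper's: reduce $\widebar{m}_{N,b}(x,y)$ to $\max_{1\le k\le\lfloor Nb\rfloor}|\s_k|$ with $\s_k=\frac{1}{\sqrt N}\sum_{n=1}^k h_{x,y}(\xi_n)$, prove a fourth-moment bound on increments, and feed it into the generalized Billingsley maximal inequality (Lemma \ref{le:billmod12.2}) with two exponents. However, the step you yourself flag as the "main obstacle" --- the fourth-moment estimate --- is exactly where the proof lives, and the route you sketch for it has two concrete problems. First, a moment bound of the shape $(k-j)^2(y-x)$ for the first term cannot produce the factor $N^{-\theta}$ after passing through the maximal inequality: with exponent $\alpha=2$ one gets $(\lfloor Nb\rfloor)^2/N^2=b^2$ and nothing more. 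What is actually needed (and what the paper proves) is a \emph{sub-quadratic} exponent, $\E|\s_j-\s_i|^4\le CN^{-2}\bigl((j-i)^{2-\theta}(y-x)+(j-i)^2(y-x)^{3/2}\bigr)$, so that applying Lemma \ref{le:billmod12.2} with $\alpha_1=2-\theta$, $\alpha_2=2$ yields $(\lfloor Nb\rfloor)^{2-\theta}/N^{2}=b^{2-\theta}N^{-\theta}$. Your "interpolation to gain $N^{-\theta}$" gestures at this but does not supply the mechanism.

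Second, the mechanism is not a diagram-formula computation on the infinite Hermite series. The paper's key device is to rewrite the unbounded tail as a bounded indicator minus a \emph{finite} sum, $h_{x,y}(\xi_n)=\1_{\{x<F(G(\xi_n))\le y\}}-\sum_{l=r}^{\lfloor 1/D\rfloor}\frac{c_l(y)-c_l(x)}{l!}H_l(\xi_n)$. This immediately gives $\E h_{x,y}^p(\xi_0)\le C(y-x)$ for every even $p$ (boundedness of the indicator plus Nelson's hypercontractive inequality and the Cauchy--Schwarz bound $(c_l(y)-c_l(x))^2\le(y-x)\,l!$ on the finitely many lower-order coefficients), and it lets one control the cross terms $\Sigma_{21},\Sigma_{22},\Sigma_3,\Sigma_4$ via Lemma 2.2 of \cite{koul:surgailis:2002} rather than a uniform diagram expansion over $l$. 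The sub-quadratic exponent, and hence $\theta$, then comes from the double sum $\sum\gamma^{r}(n_1-n_2)\le C(j-i)^{2-rD+\eta}$ generated by the lower-order part inside $\Sigma_{22}$, giving $\theta=\min\{\tfrac12,rD-\eta\}$; it is governed by the Hermite rank $r$ and $D$, not by the gap between $\lceil 1/D\rceil$ and $1/D$ as you conjecture. Without this decomposition and the resulting $(j-i)^{2-\theta}$ term, your plan does not close.
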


\begin{proof}
In order to bound the probability in \eqref{eq:le:bound}, we use arguments from \cite{billingsley:1968}. 
For this, we express $\widebar{m}_{N,b}$ in \eqref{eq:mbar} as 
\begin{equation*}
\begin{aligned}
\widebar{m}_{N,b}(x, y) 
&= 
\sup_{t\in [0, b]} \frac{1}{\sqrt{N}} \left| \sum\limits_{n=1}^{\lfloor Nt \rfloor}\sum\limits_{l=\lceil \frac{1}{D}\rceil}^{\infty}\frac{c_l(y)-c_l(x)}{l!}H_l(\xi_n) \right|
\\&=
\max_{1 \leq k \leq\lfloor Nb \rfloor} \frac{1}{\sqrt{N}} \left| \sum\limits_{n=1}^{k}\sum\limits_{l=\lceil \frac{1}{D}\rceil}^{\infty}\frac{c_l(y)-c_l(x)}{l!}H_l(\xi_n) \right|
=: \max_{1 \leq k \leq \lfloor Nb \rfloor} \left| \s_{k} \right|.
\end{aligned}
\end{equation*}
Note that
\begin{align*}
\s_j-\s_i=\frac{1}{\sqrt{N}}\sum\limits_{n=i+1}^{j}\sum\limits_{l=\lceil \frac{1}{D}\rceil}^{\infty}\frac{c_l(y)-c_l(x)}{l!}H_l(\xi_n)
\end{align*}
and define
\begin{align*}
h_{x, y}(\xi_n)\defeq \sum\limits_{l=\lceil \frac{1}{D}\rceil}^{\infty}\frac{c_l(y)-c_l(x)}{l!}H_l(\xi_n)
=\1_{\{x<F(G(\xi_n))\leq y\}}- (y-x) -\sum\limits_{l=r}^{\lfloor \frac{1}{D}\rfloor}\frac{c_l(y)-c_l(x)}{l!}H_l(\xi_n).
\end{align*}
Then,
\begin{equation}\label{eq:sum}
\begin{aligned}
\E \left|\s_j-\s_i\right|^4
&=\E\left|\frac{1}{\sqrt{N}}\sum\limits_{n=i+1}^{j}h_{x, y}(\xi_n)\right|^4\\
&=\frac{1}{N^2} \E \left(\Sigma_1+4\Sigma_{21}+3\Sigma_{22}+6\Sigma_3+\Sigma_4\right)
\end{aligned}
\end{equation}
with
\begin{align*}
\Sigma_1	& \defeq \sum\limits_{n=i+1}^j h_{x, y}^4(\xi_n), \\ 
\Sigma_{21} & \defeq \sum{}^{'} h_{x, y}^3(\xi_{n_1}) h_{x, y}(\xi_{n_2}), \ \Sigma_{22} \defeq \sum{}^{'} h_{x, y}^2(\xi_{n_1}) h_{x, y}^2(\xi_{n_2}), \\
\Sigma_{3} & \defeq \sum{}^{'} h_{x, y}^2(\xi_{n_1}) h_{x, y}(\xi_{n_2})h_{x, y}(\xi_{n_3}),\\
\Sigma_{4} & \defeq \sum{}^{'} h_{x, y}(\xi_{n_1}) h_{x, y}(\xi_{n_2})h_{x, y}(\xi_{n_3})h_{x, y}(\xi_{n_4}),
\end{align*}
where $\sum\limits{}^{'}$ extends over all different indices $i+1\leq n_1, \ldots, n_p\leq j$, $n_r \neq n_s$, $r \neq s$, $p=1,\dots,4$.

Note that for any even integer $p\geq 2$, there is a constant $C > 0$ such that
\begin{equation} \label{eq:hp}
\E\left(h_{x, y}^p(\xi_0)\right) \leq C\left(y-x\right)
\end{equation}
since
\begin{align}
\E\left(h_{x, y}^p(\xi_0)\right)
&\leq 
C\left(\E \1_{\left\{x<F(G(\xi_0))\leq y\right\}}+ (y-x)^p +
\E\pr{\left| \sum\limits_{l=r}^{\lfloor \frac{1}{D}\rfloor}\left(\frac{c_l(y)-c_l(x)}{l!}\right)H_l(\xi_0)\right|^p}\right)
 \label{eq:hp:eq2-1} \\
&\leq 
C\left(\E \1_{\left\{x< F(G(\xi_0)) \leq y\right\}}+ (y-x)^p+
\sum\limits_{l=r}^{\lfloor \frac{1}{D}\rfloor}\left(\frac{c_l(y)-c_l(x)}{l!}\right)^p\E\left(\left|H_l(\xi_0)\right|^p\right)\right) \label{eq:hp:eq2} \\
&\leq C\left(\E \1_{\left\{x<F(G(\xi_0)) \leq y\right\}}+ (y-x)^p+\sum\limits_{l=r}^{\lfloor \frac{1}{D}\rfloor}\frac{\left|c_l(y)-c_l(x)\right|^p}{(l!)^{\frac{p}{2}}}(p-1)^{\frac{lp}{2}}\right) \label{eq:hp:eq3} \\
&\leq C\left(y-x\right) \label{eq:hp:eq4},
\end{align}
where $C$ is a generic constant that can change upon each appearance. Inequalities \eqref{eq:hp:eq2-1} and  
 \eqref{eq:hp:eq2} follow from
\begin{align*}
\left(\sum\limits_{k=1}^n|x_k|\right)^p\leq n^{p-1}\sum\limits_{k=1}^n|x_k|^p
\end{align*}
which holds for any $p\geq1$ and is a direct consequence of Hölder's inequality.
Inequality \eqref{eq:hp:eq3} follows by Nelson's inequality; see \cite{nourdin:rosinski:2014} Lemma 2.1. Finally, \eqref{eq:hp:eq4} is a consequence of applying the Cauchy-Schwarz inequality
\begin{equation} \label{eq:ccCS}
\pr{c_l(y)-c_l(x)}^2
=\E^2\left(\1_{\left\{x<F\pr{G\pr{\xi_0}}\leq y\right\}} H_l(\xi_0) \right)
\leq (y-x) \E\pr{H^2_l(\xi_0)}=(y-x)l!
\end{equation}
and by noticing that $(y-x)^{\frac{p}{2}} \leq y-x$ for $p \geq 2$ and $x, y \in (0,1)$.

We now consider the summands on the right-hand side of formula \eqref{eq:sum} separately. 
Starting with $\Sigma_1$, note that \eqref{eq:hp} gives
\begin{align*}
\E\left|\Sigma_1\right|\leq C (j-i) (y-x).
\end{align*}
In order to estimate the remaining quantities, we make use of Lemma 4.5 in \cite{taqqu1977law}.
This together with \eqref{eq:hp} above, immediately yields 
\begin{align*}
\E\left|\Sigma_{21}\right|&\leq C(j-i)^{\frac{3}{2}} \left(\E \left(h^2_{x, y}(\xi_0)\right)\right)^{\frac{1}{2}}\left(\E \left(h^6_{x, y}(\xi_0)\right)\right)^{\frac{1}{2}}\leq C(j-i)^{\frac{3}{2}} (y-x),\\
\E\left|\Sigma_3\right|&\leq C(j-i)^2 \E \left(h^2_{x, y}(\xi_0)\right)\left(\E \left(h^4_{x, y}(\xi_0)\right)\right)^{\frac{1}{2}}\leq C(j-i)^2 (y-x)^{\frac{3}{2}},\\
\E\left|\Sigma_4\right|&\leq C (j-i)^2 \E^2 \left(h^2_{x, y}(\xi_0)\right)\leq C(j-i)^2 (y-x)^2.
\end{align*}
It remains to find an upper bound for $\E\Sigma_{22}$. For this, define
\begin{equation*} \label{eq:LandLtilde}
L_{x,y}(\xi_{n}):=\sum\limits_{l=r}^{\lfloor \frac{1}{D}\rfloor}\frac{c_l(y)-c_l(x)}{l!}H_l(\xi_n)
\
\text{ and }
\
\widetilde{h}_{x, y}(z)=\1_{\{x< F(G(z)) \leq y\}}-(y-x).
\end{equation*} 
It then holds that
\begin{align}
\E \left(h^2_{x, y}(\xi_{n_1})h^2_{x, y}(\xi_{n_2})\right)
&=\E \left( (\widetilde{h}_{x, y}(\xi_{n_1})-L_{x,y}(\xi_{n_{1}}))^2(\widetilde{h}_{x, y}(\xi_{n_2})-L_{x,y}(\xi_{n_{2}}))^2\right)\notag\\
&\leq
4 \E \left( (\widetilde{h}^2_{x, y}(\xi_{n_1}) + L^2_{x,y}(\xi_{n_{1}}))(\widetilde{h}^2_{x, y}(\xi_{n_2})+L^2_{x,y}(\xi_{n_{2}})) \right)\notag\\
&=
4 \Big( \E (\widetilde{h}^2_{x, y}(\xi_{n_1})\widetilde{h}^2_{x, y}(\xi_{n_2})) + \E( \widetilde{h}^2_{x, y}(\xi_{n_1})L^2_{x,y}(\xi_{n_{2}})) \notag
\\ & \hspace{1cm}
+ \E(L^2_{x,y}(\xi_{n_{1}}) \widetilde{h}^2_{x, y}(\xi_{n_2}))+ 
\E (L^2_{x,y}(\xi_{n_{1}}) L^2_{x,y}(\xi_{n_{2}})) \Big). \label{eq:4_summands}
\end{align}

Before we consider the four summands in \eqref{eq:4_summands} separately, we make the following observation. With arguments given below,
\begin{align}
\E (L^4_{x,y}(\xi_{n}))
&= \E \left( \sum\limits_{l=r}^{\lfloor \frac{1}{D}\rfloor}\frac{c_l(y)-c_l(x)}{l!}H_l(\xi_n) \right)^4 \notag \\
&\leq C \sum\limits_{l=r}^{\lfloor \frac{1}{D}\rfloor} \left( \frac{c_l(y)-c_l(x)}{l!} \right)^4 \E H_l^4(\xi_n) \label{al:ggggggGGG1} \\
&\leq C \sum\limits_{l=r}^{\lfloor \frac{1}{D}\rfloor} \frac{\left(c_l(y)-c_l(x)\right)^4}{l!^2} 3^{2l}, \label{al:ggggggGGG2}
\end{align}
where \eqref{al:ggggggGGG1} follows by H\"older's inequality and \eqref{al:ggggggGGG2} by Nelson's inequality; see \cite{nourdin:rosinski:2014} Lemma 2.1. Then, combining \eqref{al:ggggggGGG2} with \eqref{eq:ccCS}, 
\begin{align} \label{eq:L4}
\E (L^4_{x,y}(\xi_{n}))\leq C(y-x)^2.
\end{align}
In the following, we consider the summands in \eqref{eq:4_summands} separately.
For the last one, the Cauchy-Schwarz inequality and \eqref{eq:L4} yield
\begin{align} \label{eq:h-firstsummand}
\E (L^2_{x,y}(\xi_{n_{1}}) L^2_{x,y}(\xi_{n_{2}})) 
\leq
(\E (L^4_{x,y}(\xi_{n_{1}})))^{\frac{1}{2}} (\E(L^4_{x,y}(\xi_{n_{2}})))^{\frac{1}{2}}
\leq C (y-x)^2.
\end{align}
Since $\E(\widetilde{h}^2_{x, y}(\xi_{n_1}) L^2_{x,y}(\xi_{n_{2}}))$ and 
$\E(L^2_{x,y}(\xi_{n_{1}}) \widetilde{h}^2_{x, y}(\xi_{n_2}))$ in \eqref{eq:4_summands} can be treated analogously, we only consider $\E(\widetilde{h}^2_{x, y}(\xi_{n_1}) L^2_{x,y}(\xi_{n_{2}}))$.
Given the definition of $\widetilde{h}^2_{x, y}$ in \eqref{eq:LandLtilde} and with further explanations provided below, we get
\begin{align}
\E( \widetilde{h}^2_{x, y}(\xi_{n_1})L^2_{x,y}(\xi_{n_{2}}))
&=
\E( (\1_{\{x< F(G(\xi_{n_{1}})) \leq y\}}-(y-x))^2L^2_{x,y}(\xi_{n_{2}})) \notag
\\&=
\E(\1_{\{x< F(G(\xi_{n_{1}})) \leq y\}}L^2_{x,y}(\xi_{n_{2}}))
+(y-x)^2\E(L^2_{x,y}(\xi_{n_{2}})) \notag
\\&\hspace{3cm}+2(y-x)\E(\1_{\{x< F(G(\xi_{n_{1}})) \leq y\}}L^2_{x,y}(\xi_{n_{2}})) \label{al:werty1}
\\&\leq 
C(y-x)^{\frac{3}{2}}+C(y-x)^3+C(y-x)^{\frac{5}{2}} \notag
\\&\leq C(y-x)^{\frac{3}{2}}. \label{eq:h-secondsummand}
\end{align}
For the first and third summand in \eqref{al:werty1}, the Cauchy-Schwarz inequality and \eqref{eq:L4} yield
\begin{align*}
\E(\1_{\{x< F(G(\xi_{n_{1}})) \leq y\}}L^2_{x,y}(\xi_{n_{2}}))
\leq \left(\E\pr{\1_{\{x< F(G(\xi_{n_{1}})) \leq y\}}}\right)^{\frac{1}{2}}\left( \E ( L^4_{x,y}(\xi_{n_{2}}) )\right)^{\frac{1}{2}}\leq C(y-x)^{\frac{3}{2}}.
\end{align*}
Moreover, we have, by orthogonality of the Hermite polynomials, that the second summand in \eqref{al:werty1} can be bounded as
\begin{align*}
\E(L^2_{x,y}(\xi_{n_{2}}))=\E\pr{\sum\limits_{l=r}^{\lfloor \frac{1}{D}\rfloor}\frac{c_l(y)-c_l(x)}{l!}H_l(\xi_{n_{2}})}^2
=\sum\limits_{l=r}^{\lfloor \frac{1}{D}\rfloor}\frac{\pr{c_l(y)-c_l(x)}^2}{(l!)^2}\E H^2_l(\xi_{n_{2}})\leq C(y-x).
\end{align*}
The first summand in \eqref{eq:4_summands} requires some more calculations, leading to
\begin{align*}
&\E \left(\widetilde{h}^2_{x, y}(\xi_{n_1})\widetilde{h}^2_{x, y}(\xi_{n_2})\right)\\
&=\E\left[\left(\1_{\left\{x< F(X_{n_1}) \leq y\right\}}-(y-x)\right)^2\left(\1_{\left\{x< F(X_{n_2}) \leq y\right\}}-(y-x)\right)^2\right]\\
&=\E\Bigg[\left(\1_{\left\{x< F(X_{n_1}) \leq y\right\}}+(y-x)^2-2(y-x)\1_{\left\{x<F(X_{n_1})\leq y\right\}}\right)
\\ &\hspace{1cm} \times 
\left(\1_{\left\{x< F(X_{n_2}) \leq y\right\}}+(y-x)^2-2(y-x) \1_{\left\{x<F(X_{n_2})\leq y\right\}}\right)\Bigg]\\
&=\E\left(\1_{\left\{x< F(X_{n_1}) \leq y\right\}}\1_{\left\{x< F(X_{n_2}) \leq y\right\}}\right)\left(1-2(y-x)\right)^2+2(y-x)^3-3(y-x)^4 \\
&=\E \left(\widetilde{h}_{x, y}(\xi_{n_1})\widetilde{h}_{x, y}(\xi_{n_2})\right)\left(1-2(y-x)\right)^2+(y-x)^2(1-(y-x))^{2}\\
&\leq \left|\E \left(\widetilde{h}_{x, y}(\xi_{n_1})\widetilde{h}_{x, y}(\xi_{n_2})\right)\right| + (y-x)^2.
\end{align*}

By orthogonality of the Hermite expansion
\begin{align*}
\E \left(\widetilde{h}_{x, y}(\xi_{n_1})\widetilde{h}_{x, y}(\xi_{n_2})\right)
&= \E (h_{x, y}(\xi_{n_1})h_{x, y}(\xi_{n_2}) ) +
\sum\limits_{l=r}^{\lfloor \frac{1}{D}\rfloor} \left( \frac{c_l(y)-c_l(x)}{l!} \right)^2 \E \left( H_l(\xi_{n_1}) H_l(\xi_{n_2}) \right)
\\&= \E (h_{x, y}(\xi_{n_1})h_{x, y}(\xi_{n_2}) ) +
\sum\limits_{l=r}^{\lfloor \frac{1}{D}\rfloor} \frac{\left(c_l(y)-c_l(x)\right)^2}{l!} \gamma^{l}(n_1 -n_2)
\\&\leq \E (h_{x, y}(\xi_{n_1})h_{x, y}(\xi_{n_2}) ) +
(y-x) \sum\limits_{l=r}^{\lfloor \frac{1}{D}\rfloor} \gamma^{l}(n_1 -n_2)
\\&\leq \E (h_{x, y}(\xi_{n_1})h_{x, y}(\xi_{n_2}) ) +
(y-x) C \gamma^{r}(n_1 -n_2).
\end{align*}
Then,
\begin{align*}
\left|\E \left(\widetilde{h}_{x, y}(\xi_{n_1})\widetilde{h}_{x, y}(\xi_{n_2})\right)\right|
\leq \left|\E (h_{x, y}(\xi_{n_1})h_{x, y}(\xi_{n_2}) ) \right|+ C (y-x)\gamma^{r}(n_1 -n_2),
\end{align*}
such that
\begin{align} \label{eq:h-thirdsummand}
&
\E \left(\widetilde{h}^2_{x, y}(\xi_{n_1})\widetilde{h}^2_{x, y}(\xi_{n_2})\right)
\leq
\left|\E ( h_{x, y}(\xi_{n_1})h_{x, y}(\xi_{n_2}) ) \right|+
C (y-x)\gamma^{r}(n_1 -n_2).
\end{align}
Combining \eqref{eq:4_summands}, \eqref{eq:h-firstsummand}, \eqref{eq:h-secondsummand} and \eqref{eq:h-thirdsummand} finally gives
\begin{align*}
\E \left(h^2_{x, y}(\xi_{n_1})h^2_{x, y}(\xi_{n_2})\right)
&\leq
C\left|\E h_{x, y}(\xi_{n_1})h_{x, y}(\xi_{n_2})\right|
\\& \hspace{1cm}+C(y-x)^2+C(y-x)^{\frac{3}{2}} + C (y-x)\gamma^{r}(n_1 -n_2).
\end{align*}
Lemma 4.5 in \cite{taqqu1977law} yields
\begin{align*}
\sum\limits{}^{'} \left|\E \left(h_{x, y}(\xi_{n_1})h_{x, y}(\xi_{n_2})\right)\right|\leq C(j-i)(y-x).
\end{align*}
Since $\gamma(k)=k^{-D}L(k)$, 
\begin{align*}
  \sum\limits_{i+1\leq n_1\neq n_2 \leq j}
  \gamma^r(n_1-n_2)
  &=  \sum\limits_{n_1=1}^{j-i}\sum\limits_{n_2=1}^{j-i}
  \gamma^r(n_1-n_2)-(j-i)\gamma^r(0)\\
  &=2\sum\limits_{k=1}^{j-i-1}
 (j-i-k) \gamma^r(k)\\
 &\sim \frac{2}{1-rD} (j-i)^{2-rD}L^{r}(j-i); 
\end{align*}
see Proposition 2.2.1 in \cite{PipirasTaqqu}.
Therefore, for any $\epsilon>0$ there exists an $N_0\in \NN$ such that 
\begin{align*}
   \sum\limits_{i+1\leq n_1\neq n_2 \leq j}
  \gamma^r(n_1-n_2)\leq (1+\epsilon) \frac{2}{1-rD} (j-i)^{2-rD}L^{r}(j-i)
\end{align*}
for $j-i\geq N_0$.
As a consequence thereof, there exists a constant $C>0$ such that 
\begin{align*}
    \sum\limits_{i+1\leq n_1\neq n_2 \leq j}
  \gamma^r(n_1-n_2)\leq C (j-i)^{2-rD}L^{r}(j-i)
\end{align*}
for all $i, j\in \NN$.

Due to slow variation of $L$ for any $\eta>0$, there exists a $C>0$, such that
\begin{align*}
\E\left|\Sigma_{22}\right|
&\leq C\sum\limits{}^{'} \left|\E \left(h_{x, y}(\xi_{n_1})h_{x, y}(\xi_{n_2})\right)\right| + C(j-i)^2(y-x)^2
\\&\hspace{1cm}+C(j-i)^2(y-x)^\frac{3}{2}+ C(y-x) (j-i)^{2-rD+\eta}\\
&\leq C (j-i)^2(y-x)^{\frac{3}{2}}+C(j-i)(y-x) + C(j-i)^{2-rD+\eta}(y-x).
\end{align*}
Finally, we can use the bounds on $\E \Sigma_{1}$ to $\E \Sigma_{4}$ to continue bounding \eqref{eq:sum} as follows
\begin{align*}
\E \left|\s_j-\s_i\right|^4
&=\frac{1}{N^2} \E \left(\Sigma_1+4\Sigma_{21}+3\Sigma_{22}+6\Sigma_3+\Sigma_4\right) \\
&\leq
C \frac{1}{N^2} \Big( (j-i) (y-x) + (j-i)^{\frac{3}{2}} (y-x) 
\\&\hspace{1cm}+ (j-i)(y-x) + 
(j-i)^{2-rD+\eta}(y-x)
+(j-i)^2 (y-x)^{\frac{3}{2}} + (j-i)^2 (y-x)^2 \Big) \\
&\leq
C \frac{1}{N^2} ( (j-i)^{\frac{3}{2}} (y-x) + 
(j-i)^{2-rD+\eta}(y-x)
+
(j-i)^2 (y-x)^{\frac{3}{2}} )
\\&\leq
C \frac{1}{N^2} ( (j-i)^{2 - \theta } (y-x) 
+
(j-i)^2 (y-x)^{\frac{3}{2}} )
\end{align*}
with $\theta = \min\{ \frac{1}{2}, rD - \eta \}$. 
We obtain,
\begin{equation} \label{eq:bound-fourth-moment}
\begin{aligned}
\E \left|\s_j-\s_i\right|^4
&\leq C \frac{1}{N^{\theta}} \left( \frac{j-i}{N} \right)^{2-\theta} (y-x) + C \left( \frac{j-i}{N} \right)^2 (y-x)^{\frac{3}{2}}
\\&\leq 
C\left( \left(y-x\right)^{\frac{1}{2-\theta}} \sum_{q=i+1}^j \frac{1}{N^{\frac{2}{2-\theta}} } \right)^{2 - \theta }
+
C\left( \left(y-x\right)^{\frac{3}{4}} \sum_{q=i+1}^j \frac{1}{N} \right)^{2}.
\end{aligned}
\end{equation}
Applying Markov's inequality to the probability $\PP( |\s_j-\s_i | > \lambda) = \PP( |\s_j-\s_i |^4 > \lambda^4) $ and using the bound \eqref{eq:bound-fourth-moment}, the conditions of Lemma \ref{le:billmod12.2} are satisfied with $\gamma=4$, $\alpha_{1} = 2-\theta, \alpha_{2}=2$, $v_1 = (y-x)^{\frac{1}{2-\theta}} \frac{1}{N^{\frac{2}{2-\theta}}}$, and $v_2 = (y-x)^{\frac{3}{4}}\frac{1}{N}$.
Then, applying Lemma \ref{le:billmod12.2}  yields
\begin{equation*}
\begin{aligned}
\PP\left(\max_{1 \leq k \leq \lfloor Nb\rfloor} \left| \s_{k} \right| >\lambda \right)
&\leq 
C_{\alpha_{1},\gamma} \frac{1}{\lambda^4} \left( \left(y-x\right)^{\frac{1}{2-\theta}} \sum_{n=1}^{\lfloor Nb\rfloor} \frac{1}{N^{\frac{2}{2-\theta}}} \right)^{2-\theta}
+
C_{\alpha_{2},\gamma} \frac{1}{\lambda^4} \left( \left(y-x\right)^{\frac{3}{4}} \sum_{n=1}^{\lfloor Nb\rfloor} \frac{1}{N} \right)^{2}
\\&\leq
C_{\alpha_{1},\gamma} \frac{1}{N^{\theta}} \frac{1}{\lambda^4} b^{2-\theta} \left(y-x\right)
+
C_{\alpha_{2},\gamma} \frac{1}{\lambda^4} b^2 \left(y-x\right)^{\frac{3}{2}}.
\end{aligned}
\end{equation*}
\end{proof}

The subsequent lemmas are used in Appendix \ref{app:proofmain} and are all formulated in terms of a generic sequence of refining partitions which covers the two sequences of partitions \eqref{eq:refiningpartitions00} and \eqref{eq:refiningpartitions11} in Appendix \ref{app:proofmain}. 
For $k=0, \ldots, K_N$
define refining partitions 
\begin{align} \label{eq:refiningpartitions1}
x_i(k)\defeq \widetilde{a}_{p}+\frac{i}{2^k}2\delta, \hspace{0.2cm} i=0, \ldots, 2^k,
\end{align}
of the interval $[\widetilde{a}_p, \widetilde{a}_p+a]$
and for $x\in [0, a]$ choose $i_k(x)$ such that
\begin{align*} 
\widetilde{a}_{p}+x\in \left( x_{i_k(x)}(k), x_{i_k(x)+1}(k)\right].
\end{align*}
Note that for the partitions defined in \eqref{eq:refiningpartitions00} and \eqref{eq:refiningpartitions11}, we consider 
$\widetilde{a}_p=a_p$, $a=2\delta$ and $\widetilde{a}_p=0$, $a=1$, respectively.
All following lemmas in this section refer to these partitions.

\begin{lemma} \label{le:randterm}
Let $m_{N}$ be defined as in
\eqref{eq:mN:mainbody}
 and let $\sqrt{N}/2^{K_N}\to 0$. Then, for all $\lambda, b \in (0, 1]$ there is an $N_{\lambda}$ and a constant $C>0$ such that
\begin{equation*}
\begin{aligned}
&\PP \left( \sup_{x\in [0, a]}\sup_{t\in [0, b]} \left| m_N(x_{i_{K_N}(x)}(K_N), t)-m_N(x+c, t)\right| > \lambda \right)
\leq
\frac{C}{\lambda^4}b^{\frac{3}{2}} 
\max\left\{ \frac{1}{N^{\frac{1}{2}}} , 
\frac{N}{2^{K_N \frac{1}{2}}} \right\}
\end{aligned}
\end{equation*}
for all $N \geq N_{\lambda}$ and for all $c \geq 0$ such that $x+c\leq x_{i_{K_N}(x)+1}(K_N)$.
\end{lemma}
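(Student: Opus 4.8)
\textbf{Proof proposal for Lemma \ref{le:randterm}.}

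The plan is to bound the supremum over $x\in[0,a]$ by a single probability about the fixed (innermost) partition point, exploiting that the partition $x_i(K_N)$ has mesh $a/2^{K_N}$, and then to invoke Lemma \ref{le:bill}. First I would observe that for every $x\in[0,a]$ we have both $\tilde a_p+x$ and $x_{i_{K_N}(x)}(K_N)$ lying in the same partition cell $(x_{i_{K_N}(x)}(K_N),\,x_{i_{K_N}(x)+1}(K_N)]$ of length $a/2^{K_N}$ (and the hypothesis $x+c\le x_{i_{K_N}(x)+1}(K_N)$ places $x+c$ in that same cell as well). Hence
\begin{align*}
\sup_{x\in[0,a]}\sup_{t\in[0,b]}\bigl|m_N(x_{i_{K_N}(x)}(K_N),t)-m_N(x+c,t)\bigr|
\le \max_{0\le i\le 2^{K_N}-1}\,\widebar m_{N,b}\bigl(x_i(K_N),\,x_{i+1}(K_N)\bigr),
\end{align*}
using the notation $\widebar m_{N,b}$ from \eqref{eq:mbar}; this works because inside one cell the difference $m_N(u,t)-m_N(v,t)$ is dominated, uniformly in $t\le b$, by the oscillation $\widebar m_{N,b}$ across the full cell (monotonicity-type control coming from the indicator structure of $h_{x,y}$, exactly as exploited in the proof of Lemma \ref{le:bill}). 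Strictly, since $c$ may not be a partition point one should allow a factor $2$ and compare $m_N(x+c,t)$ to $m_N(x_{i_{K_N}(x)+1}(K_N),t)$ as well; this only changes constants.

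Next I would apply a union bound over the $2^{K_N}$ cells and use Lemma \ref{le:bill} on each cell with $y-x=a/2^{K_N}\le 2\delta/2^{K_N}$ (recall $a=2\delta$ or $a=1$ in the two instantiations, so $a\le 2$) and the given $b$. This gives
\begin{align*}
\PP\Bigl(\max_{0\le i\le 2^{K_N}-1}\widebar m_{N,b}(x_i(K_N),x_{i+1}(K_N))>\tfrac{\lambda}{2}\Bigr)
&\le \sum_{i=0}^{2^{K_N}-1}\Bigl(C_1\tfrac{1}{\lambda^4}b^{2-\theta}\tfrac{1}{N^\theta}\tfrac{a}{2^{K_N}}
+C_2\tfrac{1}{\lambda^4}b^2\bigl(\tfrac{a}{2^{K_N}}\bigr)^{3/2}\Bigr)\\
&\le \frac{C}{\lambda^4}\Bigl(b^{2-\theta}\frac{a}{N^\theta}+b^2\,a^{3/2}\,\frac{1}{2^{K_N/2}}\Bigr).
\end{align*}
Since $b,\lambda\le 1$ we have $b^{2-\theta}\le b^{3/2}$ and $b^2\le b^{3/2}$, and $a\le 2$, $a^{3/2}\le 2^{3/2}$, so the right-hand side is at most $\frac{C}{\lambda^4}b^{3/2}\bigl(N^{-\theta}+2^{-K_N/2}\bigr)$. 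It remains to reconcile $N^{-\theta}$ (with $\theta\in(0,\tfrac12]$) and $2^{-K_N/2}$ with the claimed bound $\max\{N^{-1/2},\,N\,2^{-K_N/2}\}$: since $\sqrt N/2^{K_N}\to0$, for $N$ large one has $2^{K_N}\ge\sqrt N$, hence $2^{-K_N/2}\le N^{-1/4}$ is dominated, and more to the point $N^{-\theta}+2^{-K_N/2}\le C\max\{N^{-1/2}, N\cdot 2^{-K_N/2}\}$ once $N$ exceeds some $N_\lambda$; the factor $N$ in the second term of the stated max is the (generous) slack one keeps so the bound is uniform in the regime where $K_N$ is only slightly larger than $\log_2 N$, which is exactly the regime in which this lemma is applied in Appendix \ref{app:proofmain}.

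The main obstacle I anticipate is the first reduction step: justifying rigorously that the supremum over the \emph{continuum} $x\in[0,a]$ collapses to a maximum over the finitely many partition cells, uniformly in $t\in[0,b]$. For a fixed $x$ this is immediate, but the uniformity requires that the within-cell map $u\mapsto m_N(u,t)$ be controlled (monotone-plus-bounded-correction), which is where the structure $h_{x,y}(\xi_n)=\1_{\{x<F(G(\xi_n))\le y\}}-L_{x,y}(\xi_n)$ and the Hölder continuity of $F$ (already used to pass to \eqref{eq:tightness_Mn_part1} and \eqref{eq:tightness_Mn_part2}) enter; one must keep track of the extra lower-order term $L_{x,y}$ so that the monotonicity is only approximate, absorbing the defect into the constant $C$. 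Everything after that is a union bound plus Lemma \ref{le:bill} plus the bookkeeping relating $N^{-\theta}$, $2^{-K_N/2}$ and the stated maximum.
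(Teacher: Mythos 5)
Your overall architecture (reduce to the partition cells, union bound, invoke Lemma \ref{le:bill}) matches the paper's, but the reduction step you flag as ``the main obstacle'' is a genuine gap, and the fix you sketch does not work. The map $u\mapsto m_N(u,t)$ is \emph{not} monotone: writing $m_N(y,t)-m_N(x,t)=\frac{1}{\sqrt{N}}\sum_{n\le Nt}\bigl[\1_{\{x<F(G(\xi_n))\le y\}}-(y-x)\bigr]-\frac{1}{\sqrt{N}}\sum_{n\le Nt}L_{x,y}(\xi_n)$, only the indicator part is monotone in the cell; the lower-order Hermite correction $L_{x,y}$ is not, and its supremum over the continuum of $x$ in a cell cannot be collapsed to the cell endpoints. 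More importantly, this defect cannot be ``absorbed into the constant $C$'': the paper controls it (Lemma \ref{le:randterm1}) by bounding $|c_l(x+c)-c_l(x_{i_{K_N}(x)}(K_N))|\le\sqrt{l!}\,2^{-K_N/2}$ uniformly in $x$ via Cauchy--Schwarz and then applying Markov's inequality, which produces the term $C\,N b^2/(2^{K_N}\lambda^2)$. That term is precisely the source of the factor $N$ in the stated $\max\{N^{-1/2},\,N2^{-K_N/2}\}$ and of the standing hypothesis $\sqrt{N}/2^{K_N}\to 0$; it is not ``generous slack'' but an essential, $N$-dependent contribution that your bound $\frac{C}{\lambda^4}b^{3/2}(N^{-\theta}+2^{-K_N/2})$ silently drops.

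A second omission: even for the monotone indicator part, after enlarging to the full cell you obtain a sum of \emph{non-negative} terms, whereas Lemma \ref{le:bill} applies to the centered process. The paper explicitly adds and subtracts the deterministic drift $\frac{1}{\sqrt{N}}\sum_{n\le Nb}(x_{i_{K_N}(x)+1}(K_N)-x_{i_{K_N}(x)}(K_N))\le \sqrt{N}/2^{K_N}$ and uses $\sqrt{N}/2^{K_N}<\lambda/4$ for $N\ge N_\lambda$ --- this recentering is exactly where the threshold $N_\lambda$ in the statement comes from, and your proposal never performs it. To repair the proof you need the paper's split: handle the indicator part by monotonicity, recentering, union bound and Lemma \ref{le:bill} (as in Lemma \ref{le:indicatorfct}), and handle the lower-order part separately by the uniform Hermite-coefficient bound and Markov's inequality.
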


\begin{proof}
Note first that for all $x,y$,
\begin{align}
\left| m_N(y, t)-m_N(x, t)\right|
&=\left|\sum\limits_{l=\lceil \frac{1}{D}\rceil}^{\infty}\frac{c_l(y)-c_l(x)}{l!}\frac{1}{\sqrt{N}}\sum\limits_{n=1}^{\lfloor Nt\rfloor}H_l(\xi_n)\right|
\nonumber \\
&=\left|\frac{1}{\sqrt{N}}\sum\limits_{n=1}^{\lfloor Nt\rfloor} \1_{\{x<F(G(\xi_n))\leq y\}}-\sum\limits_{l=r}^{\lfloor \frac{1}{D}\rfloor}\frac{c_l(y)-c_l(x)}{l!}\frac{1}{\sqrt{N}}\sum\limits_{n=1}^{\lfloor Nt\rfloor}H_l(\xi_n)\right|
\nonumber \\
&\leq \frac{1}{\sqrt{N}}\sum\limits_{n=1}^{\lfloor Nt\rfloor}\1_{\{x<F(G(\xi_n))\leq y\}}+\left|\sum\limits_{l=r}^{\lfloor \frac{1}{D}\rfloor}\frac{c_l(y)-c_l(x)}{l!}\frac{1}{\sqrt{N}}\sum\limits_{n=1}^{\lfloor Nt\rfloor}H_l(\xi_n)\right|.
\label{eq:le:rt1}
\end{align}
Then,
\begin{align}
&\PP \left( \sup_{x\in [0, a]}\sup_{t\in [0, b]} \left| m_N(x+c, t)-m_N(x_{i_{K_N}(x)}(K_N), t)\right| > \lambda \right) \nonumber
\\&\leq
\PP \left( \sup_{x\in [0, a]}\sup_{t\in [0, b]} \left|\frac{1}{\sqrt{N}}\sum\limits_{n=1}^{\lfloor Nt\rfloor} \1_{\{x_{i_{K_N}(x)}(K_N)<F(G(\xi_n))\leq x+c\}} \right| > \frac{\lambda}{2} \right) \nonumber
\\&\hspace{1cm}+
\PP \left( \sup_{x\in [0, a]}\sup_{t\in [0, b]} \left|\sum\limits_{l=r}^{\lfloor \frac{1}{D}\rfloor}\frac{c_l(x+c)-c_l(x_{i_{K_N}(x)}(K_N))}{l!}\frac{1}{\sqrt{N}}\sum\limits_{n=1}^{\lfloor Nt\rfloor}H_l(\xi_n)\right| > \frac{\lambda}{2} \right) \label{eq:le:rt21}
\\&\leq
\PP \left( \sup_{x\in [0, a]}\sup_{t\in [0, b]} \left|\frac{1}{\sqrt{N}}\sum\limits_{n=1}^{\lfloor Nt\rfloor} \1_{\{x_{i_{K_N}(x)}(K_N)<F(G(\xi_n))\leq x+c\}} \right| > \frac{\lambda}{2} \right)
+
\frac{4}{D^2} \frac{Nb^2}{2^{K_N}\lambda^2}, \label{eq:le:rt22}
\end{align}
where \eqref{eq:le:rt21} follows by \eqref{eq:le:rt1} and \eqref{eq:le:rt22} is a consequence of applying Lemma \ref{le:randterm1} below. It remains to bound the probability in \eqref{eq:le:rt22}. Therefore, we write
\begin{align}
&\PP\pr{
\sup_{x\in [0, a]}\sup_{t\in [0, b]}\left|\frac{1}{\sqrt{N}}\sum\limits_{n=1}^{\lfloor Nt\rfloor} \1_{\left\{x_{i_{K_N}(x)}(K_N) < F(G(\xi_n))\leq x + c \right\}}\right|> \frac{ \lambda}{2}
} \nonumber
\\
&\leq \PP\pr{
\sup_{x\in [0, a]}\left|\frac{1}{\sqrt{N}}\sum\limits_{n=1}^{\lfloor Nb \rfloor} \1_{\left\{x_{i_{K_N}(x)}(K_N) < F(G(\xi_n))\leq x + c \right\}}\right|> \frac{ \lambda}{2}
}\nonumber\\
&\leq \PP\pr{
\sup_{x\in [0, a]}\left|\frac{1}{\sqrt{N}}\sum\limits_{n=1}^{\lfloor Nb \rfloor} \1_{\left\{ x_{i_{K_N}(x)}(K_N) < F(G(\xi_{n})) \leq x_{i_{K_N}(x)+1}(K_N) \right\}}\right|> \frac{ \lambda}{2}
}\nonumber\\
&\leq 
\PP \Bigg(
\sup_{x \in [0, a]}
\Bigg| \frac{1}{\sqrt{N}} \sum\limits_{n=1}^{\lfloor Nb \rfloor} \Bigg( \1_{ \left\{ x_{i_{K_N}(x)}(K_N) < F(G(\xi_n)) \leq x_{i_{K_N}(x)+1}(K_N) \right\} }
\nonumber
\\&\hspace{1cm} -
\left( x_{i_{K_N}(x)+1}(K_N)-x_{i_{K_N}(x)}(K_N) \right) \Bigg)\Bigg|
\nonumber
\\&\hspace{2cm} +
\sup_{x\in [0, a]}\left|\frac{1}{\sqrt{N}}\sum\limits_{n=1}^{\lfloor Nb \rfloor}\left(x_{i_{K_N}(x)+1}(K_N)-x_{i_{K_N}(x)}(K_N)\right)\right|
>\frac{ \lambda}{2}
\Bigg).
\label{eq:le:rt3}
\end{align}
The second summand in \eqref{eq:le:rt3} is deterministic and can be bounded by
\begin{align*}
\sup_{x\in [0, a]}\left|\frac{1}{\sqrt{N}}\sum\limits_{n=1}^{\lfloor Nb \rfloor}\left(x_{i_{K_N}(x)+1}(K_N)-x_{i_{K_N}(x)}(K_N)\right)\right|
\leq \frac{\sqrt{N}}{2^{K_N}}b\leq \frac{\sqrt{N}}{2^{K_N}}.
\end{align*}
Then, choose $N_{\lambda}\geq 1$ such that $\frac{\sqrt{N}}{2^{K_N}}
< \frac{\lambda}{4}$ for all $N \geq N_{\lambda}$.
As a result, we get
\begin{align}
&\PP\pr{
\sup_{x\in [0, a]}\sup_{t\in [0, b]}\left|\frac{1}{\sqrt{N}}\sum\limits_{n=1}^{\lfloor Nt\rfloor} \1_{\left\{x_{i_{K_N}(x)}(K_N) < F(G(\xi_n))\leq x + c \right\}}\right|> \frac{ \lambda}{2}
} \nonumber\\
\leq
&\PP \Bigg(
\sup_{x \in [0, a]}
\Bigg| \frac{1}{\sqrt{N}} \sum\limits_{n=1}^{\lfloor Nb \rfloor} \Bigg( \1_{ \left\{ x_{i_{K_N}(x)}(K_N) < F(G(\xi_n)) \leq x_{i_{K_N}(x)+1}(K_N) \right\} }
\nonumber
\\&\hspace{1cm} -
\left( x_{i_{K_N}(x)+1}(K_N)-x_{i_{K_N}(x)}(K_N) \right) \Bigg) \Bigg|>\frac{ \lambda}{2} - \frac{\sqrt{N}}{2^{K_N}}
\Bigg)
\notag
\\
&\leq 
\PP \Bigg(
\sup_{x \in [0, a]}
\Bigg| \frac{1}{\sqrt{N}} \sum\limits_{n=1}^{\lfloor Nb \rfloor} \Bigg( \1_{ \left\{ x_{i_{K_N}(x)}(K_N) < F(G(\xi_n)) \leq x_{i_{K_N}(x)+1}(K_N) \right\} }
\nonumber
\\&\hspace{1cm} -
\left( x_{i_{K_N}(x)+1}(K_N)-x_{i_{K_N}(x)}(K_N) \right) \Bigg) \Bigg|>\frac{ \lambda}{4} 
\Bigg)
\label{eq:le:rt3000}
\end{align}
for all $N \geq N_{\lambda}$.
Lemma \ref{le:indicatorfct} gives an upper bound on the probability in \eqref{eq:le:rt3000}:
\begin{align*}
\PP\pr{
\sup_{x\in [0, a]}\sup_{t\in [0, b]}\left|\frac{1}{\sqrt{N}}\sum\limits_{n=1}^{\lfloor Nt\rfloor} \1_{\left\{x_{i_{K_N}(x)}(K_N) < F(G(\xi_n))\leq x + c \right\}}\right|> \frac{ \lambda}{2}
} \leq \frac{C}{\lambda^4}b^{\frac{3}{2}} 
\max\left\{ \frac{1}{N^{\frac{1}{2}}} , 
\frac{N}{2^{K_N \frac{1}{2}}} \right\}.
\end{align*}
\end{proof}

\begin{lemma} \label{le:randterm1}
Let $c_{l}(\cdot)$ be defined as in 
\eqref{eq:mN:mainbody}. For all $\lambda>0$, it holds that 
\begin{equation*}
\PP\left(\sup\limits_{x\in [0, a]}\sup_{t\in [0, b]} \left| \sum\limits_{l=r}^{\lfloor \frac{1}{D}\rfloor}\frac{c_l(x+c)-c_l(x_{i_{K_N}(x)}(K_N))}{l!} \frac{1}{\sqrt{N}} \sum\limits_{n=1}^{\lfloor Nt\rfloor}H_l(\xi_n) \right|> \lambda \right)
\leq
\frac{1}{D^2} \frac{Nb^2}{2^{K_N}\lambda^2}
\end{equation*}
for $a,b>0$ and $c \geq 0$, such that $x+c\leq x_{i_{K_N}(x)+1}(K_N)$.
\end{lemma}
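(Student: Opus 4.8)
The plan is to apply Markov's inequality to the second moment of the supremum, after a couple of reductions. If $\lfloor Nb\rfloor = 0$ the inner partial sums $\sum_{n=1}^{\lfloor Nt\rfloor}H_l(\xi_n)$ all vanish for $t\in[0,b]$ and the bound is trivial, so assume $\lfloor Nb\rfloor\ge 1$, i.e.\ $Nb\ge 1$. Write $R_N$ for the quantity inside the probability. Since the random partial sums $\frac1{\sqrt N}\sum_{n=1}^{\lfloor Nt\rfloor}H_l(\xi_n)$ do not depend on $x$, the triangle inequality over the finitely many indices $r\le l\le\lfloor 1/D\rfloor$ gives
\begin{equation*}
R_N \le \sum_{l=r}^{\lfloor 1/D\rfloor}\Bigl(\sup_{x\in[0,a]}\Bigl|\tfrac{c_l(x+c)-c_l(x_{i_{K_N}(x)}(K_N))}{l!}\Bigr|\Bigr)\,M_{l,N},
\qquad
M_{l,N}\defeq \frac1{\sqrt N}\max_{1\le k\le\lfloor Nb\rfloor}\Bigl|\sum_{n=1}^{k}H_l(\xi_n)\Bigr|.
\end{equation*}

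For the deterministic coefficient factor I would use that, by strict monotonicity of $F$, $c_l(x+c)-c_l(x_{i_{K_N}(x)}(K_N)) = \E\bigl(\1_{\{x_{i_{K_N}(x)}(K_N)<F(G(\xi_0))\le x+c\}}H_l(\xi_0)\bigr)$, and that the constraint $x+c\le x_{i_{K_N}(x)+1}(K_N)$ forces $x+c$ and $x_{i_{K_N}(x)}(K_N)$ into the same level-$K_N$ partition interval, of length at most $2^{-K_N}$ for the partitions in use. The Cauchy--Schwarz bound \eqref{eq:ccCS} then gives $\bigl(c_l(x+c)-c_l(x_{i_{K_N}(x)}(K_N))\bigr)^2 \le \bigl(x+c-x_{i_{K_N}(x)}(K_N)\bigr)\,l! \le l!\,2^{-K_N}$, so the $l$-th coefficient factor above is at most $2^{-K_N/2}(l!)^{-1/2}$.

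The second ingredient is a moment bound on $M_{l,N}$. By orthogonality of the Hermite polynomials and stationarity, $\E\bigl|\sum_{n=i+1}^{j}H_l(\xi_n)\bigr|^2 = l!\sum_{|k|<j-i}(j-i-|k|)\gamma^l(k) \le l!\sum_{n_1,n_2=1}^{j-i}|\gamma(n_1-n_2)|^l$, which for $l\le\lfloor 1/D\rfloor$ (hence $lD<1$ since $1/D\notin\NN$) is bounded by $C(j-i)^{2-lD+\eta}$ for every $\eta>0$ — the same polynomial-growth estimate that appears in the $\Sigma_{22}$-part of Lemma \ref{le:bill}, from the proof of Lemma 3.1 (p.\ 1777) in \cite{dehling1989}, used here with $l$ in place of $r$. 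Since $2-lD+\eta>1$, a Móricz-type maximal inequality — in the spirit of Lemma \ref{le:billmod12.2}, applied with exponent $\gamma=2$ — upgrades this to $\E\max_{1\le k\le m}\bigl|\sum_{n=1}^{k}H_l(\xi_n)\bigr|^2 \le C m^{2-lD+\eta}$, and therefore $\E M_{l,N}^2 \le \frac CN (Nb)^{2-lD+\eta}$.

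Finally I would assemble the pieces: Cauchy--Schwarz over the finitely many $l$ together with the two bounds gives $\E R_N^2 \le \frac{C}{2^{K_N}}\sum_{l=r}^{\lfloor 1/D\rfloor}N^{1-lD+\eta}b^{2-lD+\eta}$; choosing $\eta<rD$ so that $lD-\eta>0$ throughout the range, and using $Nb\ge1$, each summand satisfies $N^{1-lD+\eta}b^{2-lD+\eta} = Nb^2(Nb)^{-(lD-\eta)} \le Nb^2$, whence $\E R_N^2 \le C\,Nb^2\,2^{-K_N}$ and Markov's inequality yields the claim. The main obstacle is the moment bound on $M_{l,N}$: because $\sum_k|\gamma(k)|^l$ diverges for $l<1/D$, the naive stationary $L^2$ bound is unavailable, so one must combine the polynomial-growth estimate $\sum_{n_1,n_2=1}^{m}|\gamma(n_1-n_2)|^l\le Cm^{2-lD+\eta}$ with a maximal inequality that remains valid precisely because this exponent exceeds $1$; by contrast the supremum over $x$ is harmless, since the partial sums are $x$-free and the coefficient increments are uniformly controlled by the partition mesh.
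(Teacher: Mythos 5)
Your proposal is correct in its overall architecture and reaches the stated bound, but it takes a genuinely different --- and considerably heavier --- route than the paper for the random part. The paper's proof never exploits cancellation in $\sum_{n}H_l(\xi_n)$: it bounds the coefficient increments exactly as you do via \eqref{eq:ccCS} and the mesh of the level-$K_N$ partition, then simply moves the absolute value inside the sum, $\bigl|\sum_{n=1}^{\lfloor Nt\rfloor}H_l(\xi_n)\bigr|\le\sum_{n=1}^{\lfloor Nt\rfloor}|H_l(\xi_n)|$, so that the supremum over $t$ is attained at $t=b$; Markov's inequality at second order plus the trivial estimate $\E\bigl(|H_{l_1}(\xi_{n_1})||H_{l_2}(\xi_{n_2})|\bigr)\le\sqrt{l_1!\,l_2!}$ then gives $(Nb)^2/N=Nb^2$ with no reference to the covariance structure at all. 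Your version instead keeps the signed partial sums, invokes the polynomial growth $\sum_{n_1,n_2\le m}|\gamma(n_1-n_2)|^l\le Cm^{2-lD+\eta}$, and passes through a maximal inequality; you obtain the sharper intermediate rate $N^{-1}(Nb)^{2-lD+\eta}$, which you then deliberately degrade to $Nb^2$ using $Nb\ge1$. Both proofs land in the same place; the paper's is shorter because, for the crude rate actually claimed, no cancellation is needed.

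One step of yours does not go through as literally written. Lemma \ref{le:billmod12.2} with $\gamma=2$ yields only a \emph{tail} bound $\PP(M_{l,N}\sqrt{N}>\lambda)\le C\lambda^{-2}m^{2-lD+\eta}$, and a $\lambda^{-2}$ tail bound cannot be integrated to a second-moment bound on the maximum (the integral $\int^{\infty}\lambda\cdot\lambda^{-2}\,d\lambda$ diverges logarithmically), so the asserted estimate $\E M_{l,N}^2\le CN^{-1}(Nb)^{2-lD+\eta}$ does not follow from that lemma. The repair is routine: either cite the \emph{moment} form of the M\'oricz--Serfling--Stout maximal inequality, which under the same superadditive hypothesis with exponent $\alpha=2-lD+\eta>1$ does give $\E\max_{k\le m}|\s_k|^2\le Cm^{\alpha}$, or bypass second moments entirely by applying the tail bound to each $M_{l,N}$ at level $\lambda_l\asymp\lambda\,2^{K_N/2}\sqrt{l!}$ and union-bounding over the finitely many $l$. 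With either fix your argument is complete.
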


\begin{proof}
In order to bound the probability of interest, we use \eqref{eq:ccCS} in \eqref{eq:L31} below
\begin{align}
&\PP\left(\sup\limits_{x\in [0, a]}\sup_{t\in [0, b]} \left| \sum\limits_{l=r}^{\lfloor \frac{1}{D}\rfloor}\frac{c_l(x+c)-c_l(x_{i_{K_N}(x)}(K_N))}{l!} \frac{1}{\sqrt{N}} \sum\limits_{n=1}^{\lfloor Nt\rfloor}H_l(\xi_n) \right|> \lambda \right) \notag
\\&\leq
\PP\left(\sup\limits_{x\in [0, a]}\sup_{t\in [0, b]} \sum\limits_{l=r}^{\lfloor \frac{1}{D}\rfloor}
\frac{(x+c-x_{i_{K_N}(x)}(K_N))^{\frac{1}{2}}}{l!} \sqrt{l!} \left| \frac{1}{\sqrt{N}} \sum\limits_{n=1}^{\lfloor Nt\rfloor}H_l(\xi_n) \right|>\lambda \right) \label{eq:L31}
\\&\leq
\PP\left(\sup\limits_{x\in [0, a]}\sup_{t\in [0, b]} \sum\limits_{l=r}^{\lfloor \frac{1}{D}\rfloor}
\frac{(x_{i_{K_N}(x)+1}(K_N)-x_{i_{K_N}(x)}(K_N))^{\frac{1}{2}}}{\sqrt{l!}} \frac{1}{\sqrt{N}} \sum\limits_{n=1}^{\lfloor Nt\rfloor} \left| H_l(\xi_n) \right|>\lambda \right) \notag
\\&\leq
\PP\left( \sum\limits_{l=r}^{\lfloor \frac{1}{D}\rfloor}
\frac{1}{\sqrt{2^{K_N} l!}} \frac{1}{\sqrt{N}} \sum\limits_{n=1}^{\lfloor Nb\rfloor} \left| H_l(\xi_n) \right|>\lambda \right) 
\notag
\\&\leq
\frac{1}{2^{K_N}} \E \left( \sum\limits_{l=r}^{\lfloor \frac{1}{D}\rfloor} \frac{1}{\sqrt{l!}} \frac{1}{\sqrt{N}} \sum\limits_{n=1}^{\lfloor Nb\rfloor} | H_l(\xi_n) | \right)^2 \Big(\frac{1}{\lambda}\Big)^2 
\label{eq:L33}
\\&\leq
\frac{1}{2^{K_N}}  \sum\limits_{l_{1},l_2=r}^{\lfloor \frac{1}{D}\rfloor}
\frac{1}{\sqrt{l_{1}!l_{2}!}} \frac{1}{N} \sum\limits_{n_{1},n_{2}=1}^{\lfloor Nb \rfloor} 
\E \left( |H_{l_{1}}(\xi_{n_{1}}) | | H_{l_{2}}(\xi_{n_{2}}) | \right) \Big(\frac{1}{\lambda}\Big)^2
 \notag
\\&
\leq
\frac{1}{2^{K_N}}  \sum\limits_{l_1, l_2=r}^{\lfloor \frac{1}{D}\rfloor}
\frac{1}{N} \sum\limits_{n_{1},n_{2}=1}^{\lfloor Nb \rfloor} \Big(\frac{1}{\lambda}\Big)^2 
\label{eq:L3300} 
\\&\leq
\frac{1}{D^2} \frac{N b^2}{2^{K_N} \lambda^2},\notag
\end{align}
where \eqref{eq:L33} follows by Markov's inequality. We then used Cauchy-Schwarz inequality to get \eqref{eq:L3300}.
\end{proof}

\begin{lemma} \label{le:indicatorfct}
Let $F$ denote the marginal distribution function of $X_{n}, n \in \NN$. 
Then, there is a constant $C>0$ such that 
\begin{align*}
&\PP\Bigg(
\sup_{x\in [0, a]}
\Bigg|
\frac{1}{\sqrt{N}}\sum\limits_{n=1}^{\lfloor Nb \rfloor}
\Bigg( \1_{\left\{x_{i_{K_N}(x)}(K_N) < F(X_n)\leq x_{i_{K_N}(x)+1}(K_N) \right\}}
\\&\hspace{3cm}
-\left(x_{i_{K_N}(x)+1}(K_N)-x_{i_{K_N}(x)}(K_N) \right)\Bigg)\Bigg|>\lambda
\Bigg)
\leq 
\frac{C}{\lambda^4}b^{\frac{3}{2}} 
\max\left\{ \frac{1}{N^{\frac{1}{2}}} , 
\frac{N}{2^{K_N \frac{1}{2}}} \right\}
\end{align*}
for $a>0$ and $b, \lambda \in (0, 1]$.
\end{lemma}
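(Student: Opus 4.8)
The first point is that the supremum over $x\in[0,a]$ is secretly a finite maximum. As $x$ runs through $[0,a]$, the left endpoint $x_{i_{K_N}(x)}(K_N)$ of the level-$K_N$ subinterval containing $\widetilde{a}_p+x$ takes only the $2^{K_N}$ values $x_0(K_N),\dots,x_{2^{K_N}-1}(K_N)$, and every such subinterval has the same width $w_N\defeq x_{i+1}(K_N)-x_i(K_N)=a\,2^{-K_N}$. With the notation $\widetilde{h}_{x,y}(z)=\1_{\{x<F(G(z))\le y\}}-(y-x)$ from the proof of Lemma~\ref{le:bill}, the random variable inside $\PP$ in the statement equals $\max_{0\le i\le 2^{K_N}-1}\bigl|\tfrac{1}{\sqrt N}\sum_{n=1}^{\lfloor Nb\rfloor}\widetilde{h}_{x_i(K_N),\,x_{i+1}(K_N)}(\xi_n)\bigr|$, so by a union bound it suffices to bound, for a single pair $(x,y)$ with $y-x=w_N$, the probability $\PP\bigl(\bigl|\tfrac{1}{\sqrt N}\sum_{n=1}^{\lfloor Nb\rfloor}\widetilde{h}_{x,y}(\xi_n)\bigr|>\lambda\bigr)$ by an expression that, once multiplied by $2^{K_N}$, collapses into the claimed right-hand side.

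\textbf{Splitting off the long-range part.} Recall the decomposition $\widetilde{h}_{x,y}=h_{x,y}+L_{x,y}$ from the proof of Lemma~\ref{le:bill}, where $h_{x,y}(\xi_n)=\sum_{l\ge\lceil 1/D\rceil}\tfrac{c_l(y)-c_l(x)}{l!}H_l(\xi_n)$ has Hermite rank at least $\lceil 1/D\rceil$ — hence absolutely summable autocovariances by \eqref{eq:seriesACF} — while $L_{x,y}(\xi_n)=\sum_{l=r}^{\lfloor 1/D\rfloor}\tfrac{c_l(y)-c_l(x)}{l!}H_l(\xi_n)$ is the long-range ``lower-order'' term. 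For the first piece I would simply observe that $\bigl|\tfrac{1}{\sqrt N}\sum_{n=1}^{\lfloor Nb\rfloor}h_{x,y}(\xi_n)\bigr|\le\widebar{m}_{N,b}(x,y)$ and invoke Lemma~\ref{le:bill}, which bounds $\PP(|\,\cdot\,|>\lambda/2)$ by $C_1\lambda^{-4}b^{2-\theta}N^{-\theta}(y-x)+C_2\lambda^{-4}b^{2}(y-x)^{3/2}$. For the second piece I would use Markov's inequality with fourth moments, the Cauchy--Schwarz bound $(c_l(y)-c_l(x))^2\le(y-x)\,l!$ from \eqref{eq:ccCS}, Nelson's inequality for the moments of $H_l(\xi)$, and the covariance-sum estimates $\sum_{n\in\Z}|\gamma(n)|^l<\infty$ for $l\ge\lceil 1/D\rceil$ and $\sum_{i+1\le n_1\ne n_2\le j}|\gamma(n_1-n_2)|^r\le C(j-i)^{2-rD+\eta}$ (the latter from the proof of Lemma~3.1 in \cite{dehling1989}), together with the fact that the fourth moment of a rescaled long-range Hermite partial sum is comparable to the square of its variance (read off either from convergence to a Hermite process, whose fourth moments are finite, or from the diagram formula). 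This yields $\PP(|\tfrac{1}{\sqrt N}\sum_{n=1}^{\lfloor Nb\rfloor}L_{x,y}(\xi_n)|>\lambda/2)\le C\lambda^{-4}(y-x)^{2}N^{2-2rD+\eta}b^{4-2rD+\eta}$ plus lower-order contributions.

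\textbf{Summation over the partition.} Multiplying both bounds by $2^{K_N}$ and substituting $y-x=w_N=a\,2^{-K_N}$ turns $(y-x)$ into $a$, $(y-x)^{3/2}$ into $a^{3/2}2^{-K_N/2}$ and $(y-x)^2$ into $a^2 2^{-K_N}$, so the probability of interest is bounded by
\[
\frac{C}{\lambda^4}\Bigl(b^{2-\theta}N^{-\theta}+b^{2}2^{-K_N/2}+N^{2-2rD+\eta}b^{4-2rD+\eta}2^{-K_N}\Bigr).
\]
Using $b\le1$ together with $\tfrac32\le 2-\theta$, $\tfrac32\le 4-2rD+\eta$ and $\theta\le\tfrac12$ — and, in the Hermite-rank-one setting that underlies Section~\ref{Confidenceintervals}, where $\theta=\tfrac12$ and $2-2rD<1$, bounding $N^{2-2rD+\eta}2^{-K_N}\le N\,2^{-K_N/2}$ — this collapses to $\tfrac{C}{\lambda^4}b^{3/2}\max\{N^{-1/2},N2^{-K_N/2}\}$, which is the assertion. (If $rD<\tfrac12$ one gets a bound of the same shape with a slightly larger power of $N$ in the second branch of the maximum, still $o(1)$ under the growth condition $K_N/\log_2 N\to\infty$ used in the applications.)

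\textbf{Main obstacle.} The delicate step is the long-range piece $\tfrac{1}{\sqrt N}\sum L_{x,y}(\xi_n)$. Because $L_{x,y}$ carries the small Hermite rank $r$, its partial sums are long-range dependent and grow like $N^{1-rD/2}$ rather than $\sqrt N$, so — unlike for $h_{x,y}$ — the $\sqrt N$-normalized sum does \emph{not} tend to $0$ for a fixed interval; only the extreme smallness of the width $w_N=a\,2^{-K_N}$ (which the conditions on $K_N$ force to decay faster than any power of $N$) renders its contribution negligible. Hence Lemma~2.2 of \cite{koul:surgailis:2002} is unavailable for this piece, and the careful part is to produce a fourth-moment estimate of the correct order, comparable to the squared variance $(y-x)^2N^{2-2rD}b^{4-2rD}$, and then to follow the $b$-, $w_N$- and $N$-powers so that, after the union bound over the $2^{K_N}$ intervals, they assemble exactly into $b^{3/2}\max\{N^{-1/2},N2^{-K_N/2}\}$. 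The remaining ingredients — the reduction of the supremum to a finite maximum, the $h_{x,y}$-piece via Lemma~\ref{le:bill}, and the final summation — are routine bookkeeping.
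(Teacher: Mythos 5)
Your overall architecture — rewriting the centered indicator as the full Hermite expansion, splitting it into the short-range part $h_{x,y}$ (orders $l\geq\lceil 1/D\rceil$) and the long-range part $L_{x,y}$ (orders $r\leq l\leq\lfloor 1/D\rfloor$), and handling the $h$-piece by a union bound over the $2^{K_N}$ level-$K_N$ cells together with Lemma \ref{le:bill} at $t=b$ — is exactly the paper's. Where you genuinely diverge is the $L$-piece. The paper (Lemma \ref{le:randterm1}) never performs a union bound there: it first bounds the Hermite coefficients uniformly in $x$ via Cauchy--Schwarz, $|c_l(y)-c_l(x)|\leq((y-x)\,l!)^{1/2}\leq(2^{-K_N}l!)^{1/2}$, then replaces $\sum_n H_l(\xi_n)$ by $\sum_n|H_l(\xi_n)|$ so that the random quantity no longer depends on $x$, and finishes with a plain second-moment Markov bound, obtaining $CNb^2 2^{-K_N}\lambda^{-2}$. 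This throws away all cancellation in the long-range sums but buys the factor $2^{-K_N}$ directly from the coefficients, and it satisfies $CNb^22^{-K_N}\lambda^{-2}\leq C\lambda^{-4}b^{3/2}N2^{-K_N/2}$ for all $N$ and $K_N$ (using only $b,\lambda\leq1$). Your route instead keeps the cancellation (fourth moment comparable to squared variance by hypercontractivity within a fixed chaos) but pays the union-bound factor $2^{K_N}$, landing on $C\lambda^{-4}a^22^{-K_N}N^{2-2rD+\eta}b^{4-2rD}$.

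The consequence, which you partly acknowledge, is that your bound only reproduces the stated right-hand side when $N^{1-2rD+\eta}\leq C\,2^{K_N/2}$. For $rD>\tfrac12$ this holds automatically, but for $rD<\tfrac12$ (e.g.\ $r=1$, $H>3/4$ — precisely the strongly dependent regime the paper cares most about) your estimate is \emph{not} dominated by $C\lambda^{-4}b^{3/2}N2^{-K_N/2}$ uniformly in $N$ and $K_N$; it only becomes so for $N$ large under the growth condition $K_N/\log_2N\to\infty$ imposed downstream. So as a proof of the lemma \emph{as stated} (a bound with an absolute constant, no side condition relating $N$ and $K_N$) your argument has a gap in that regime, even though it suffices for the applications in Lemma \ref{le:randterm} and the tightness proof. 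The fix is simply to treat the $L$-piece the paper's way: uniform coefficient bound plus absolute values kills the supremum over $x$ before any probabilistic estimate, and no moment comparison for long-range Hermite sums is needed at all. (The residual mismatch between $N^{-\theta}$ from Lemma \ref{le:bill} and the $N^{-1/2}$ in the first branch of the maximum when $\theta<\tfrac12$ is present in the paper's own display \eqref{eq:App32} as well, so I do not count it against you.)
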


\begin{proof}
With further explanations given below, we can infer the following bounds
\begin{align}
&\PP\Bigg(
\sup_{x\in [0, a]}
\Bigg|
\frac{1}{\sqrt{N}}\sum\limits_{n=1}^{\lfloor Nb \rfloor}
\Bigg(\1_{\left\{ x_{i_{K_N}(x)}(K_N) < F(X_n)\leq x_{i_{K_N}(x)+1}(K_N) \right\}} \notag
\\&\hspace{5cm}
-\left( x_{i_{K_N}(x)+1}(K_N)-x_{i_{K_N}(x)}(K_N) \right)\Bigg)
\Bigg|>\lambda
\Bigg) \notag
\\&\leq 
\PP\pr{
\sup_{x\in [0, a]}\left| \sum\limits_{l=r}^{\infty} \frac{c_l(x_{i_{K_N}(x)+1}(K_N))-c_l(x_{i_{K_N}(x)}(K_N))}{l!}\frac{1}{\sqrt{N}} \sum\limits_{n=1}^{\lfloor Nb\rfloor} H_l(\xi_n) \right|>\lambda
} \notag \\
&\leq \sum\limits_{i=0}^{2^{K_N}}
\PP\pr{
\left |m_N\pr{x_{i+1}(K_N), b}-m_N\pr{x_{i}(K_N),b}\right|>\frac{\lambda}{2}
} \notag
\\& \hspace{1cm}+
\PP\pr{
\sup_{x\in [0, a]}\left| \sum\limits_{l=r}^{\lfloor\frac{1}{D}\rfloor} \frac{c_l(x_{i_{K_N}(x)+1}(K_N))-c_l(x_{i_{K_N}(x)}(K_N))}{l!}\frac{1}{\sqrt{N}} \sum\limits_{n=1}^{\lfloor Nb\rfloor} H_l(\xi_n) \right|>\frac{\lambda}{2}
} \label{eq:App31}
\\&\leq
  \frac{16 C_1}{\lambda^4}b^{\frac{3}{2}} \sum\limits_{i=0}^{2^{K_N}} \frac{1}{N^{\frac{1}{2}}} \left(x_{i+1}(K_N)- x_i(K_N)\right)
 +
 \frac{16 C_2}{\lambda^4} b^2\sum\limits_{i=0}^{2^{K_N}}\left(x_{i+1}(K_N)- x_i(K_N)\right)^{\frac{3}{2}}
+
\frac{4}{D^2} \frac{N b^2}{2^{K_{N}}\lambda^{2}} \label{eq:App32}
\\&\leq
  \frac{16 C_1}{\lambda^4} b^{\frac{3}{2}}\sum\limits_{i=0}^{2^{K_N}}\left(\frac{1}{2^{K_N}}\right) \frac{1}{N^{\frac{1}{2}}} 
 +
 \frac{16 C_2}{\lambda^4} b^2\sum\limits_{i=0}^{2^{K_N}}\left(\frac{1}{2^{K_N}}\right)^{\frac{3}{2}}
+
\frac{4}{D^2} \frac{N b^2}{2^{K_{N}}\lambda^{2}}
 \notag
\\&\leq
\frac{C}{\lambda^4}b^{\frac{3}{2}} 
\max\left\{ \frac{1}{N^{\frac{1}{2}}} , 
\frac{N}{2^{K_N \frac{1}{2}}} \right\}
 \notag
 ,
\end{align}
where we used the representation \eqref{eq:mN:mainbody} in the first summand of \eqref{eq:App31}. The first probability in \eqref{eq:App32} can be bounded by Lemma \ref{le:bill} and the second one by Lemma \ref{le:randterm1}. We deduce the last inequality by using that $b, \lambda \in (0, 1]$.
\end{proof}

\section{A complementary result and its proof}\label{se:appendixc1}

In order to prove Lemma \ref{le:bill}, we use a slightly modified version of Theorem 12.2 in \cite{billingsley:1968}. We recall some notation from Chapter 12 in \cite{billingsley:1968}. Let $\xi_{1}, \dots, \xi_{N}$ be independent or identically distributed random variables and $\s_{k} = \sum_{j=1}^{k} \xi_{j} $ with $\s_{0} = 0$ and set
\begin{equation*}
M_{N} = \max_{0 \leq k \leq N} |\s_{k}|.
\end{equation*}

\begin{lemma} \label{le:billmod12.2}
Suppose there are $\gamma > 0$, $\alpha_{1},\alpha_{2} > 1$, $v_{1}, v_{2}> 0$ and a positive sequence $(u_{\ell})_{1 \leq \ell \leq N}$, such that for all $\lambda > 0$,
\begin{equation} \label{eq:billmod12.2}
\PP\left( \left| \s_{j} - \s_{i} \right| >\lambda \right)
\leq 
\frac{1}{\lambda^\gamma} \left(
\left( v_{1} \sum_{\ell = i+1}^{j} u_{\ell} \right)^{\alpha_{1}}
+
\left( v_{2} \sum_{\ell = i+1}^{j} u_{\ell} \right)^{\alpha_{2}}
\right),
\hspace{0.2cm} 
0 \leq i \leq j \leq N.
\end{equation}
Then, there are constants $C_{\alpha_{1},\gamma}, C_{\alpha_{2},\gamma} > 0$ only depending on $\alpha_{i}$, $i=1,2$ and $\gamma$, such that
\begin{equation*}
\PP\left( M_{N} > \lambda \right)
\leq 
\frac{C_{\alpha_{1},\gamma}}{\lambda^\gamma} 
\left( v_{1} \sum_{\ell = 1}^{N} u_{\ell} \right)^{\alpha_{1}}
+
\frac{C_{\alpha_{2},\gamma}}{\lambda^\gamma} 
\left( v_{2} \sum_{\ell = 1}^{N} u_{\ell} \right)^{\alpha_{2}}.
\end{equation*}
\end{lemma}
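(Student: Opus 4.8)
The plan is to follow the classical Billingsley argument for moment-type maximal inequalities, adapting it to accommodate two summand terms with different exponents $\alpha_1,\alpha_2$ rather than a single one. The key observation is that the hypothesis \eqref{eq:billmod12.2} is exactly of the form treated in Theorem 12.2 of \cite{billingsley:1968}, except that the ``metric'' $\mu_{ij}=v\sum_{\ell=i+1}^j u_\ell$ there is replaced by a sum of two such quantities raised to different powers. Since each of the two terms $\big(v_m\sum_{\ell=i+1}^j u_\ell\big)^{\alpha_m}$ is itself built from a finitely additive nonnegative set function $\ell\mapsto u_\ell$ on the blocks $(i,j]$, and $\alpha_m>1$, Billingsley's proof goes through verbatim for each term separately. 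So the first step is to recall the structure of that proof: one shows by induction on $N$ (or a dyadic splitting) that control of increments $\s_j-\s_i$ in the stated form upgrades to control of the maximum $M_N$, at the cost of a constant depending only on the exponent and on $\gamma$.

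More concretely, I would set $c_\ell = u_\ell$ and $\mu_{ij}=\sum_{i<\ell\le j}c_\ell$, so $\mu$ is superadditive in the sense $\mu_{ik}+\mu_{kj}=\mu_{ij}$, and the hypothesis reads $\PP(|\s_j-\s_i|>\lambda)\le \lambda^{-\gamma}\big((v_1\mu_{ij})^{\alpha_1}+(v_2\mu_{ij})^{\alpha_2}\big)$. The heart of Billingsley's Theorem 12.2 (really his Theorem 12.1 specialized) is a lemma stating: if $\PP(|\s_j-\s_i|\ge\lambda)\le \lambda^{-\gamma} g_{ij}$ where $g$ is a nonnegative function satisfying $g_{ih}+g_{hj}\le g_{ij}$ for $i\le h\le j$ (a superadditivity/block-subadditivity condition) and $\gamma\ge \alpha>1$ — wait, the relevant condition is that $g$ is such that $g^{?}$ — let me restate: the needed structural fact is that if $g_{ij}=(v\mu_{ij})^\alpha$ with $\alpha>1$ then $g_{ih}+g_{hj}\le g_{ij}$ because for nonnegative $a,b$ with $a+b=s$ one has $a^\alpha+b^\alpha\le s^\alpha$. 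This superadditivity of each term is precisely what drives the induction. Hence the plan is: verify that both $g^{(1)}_{ij}:=(v_1\mu_{ij})^{\alpha_1}$ and $g^{(2)}_{ij}:=(v_2\mu_{ij})^{\alpha_2}$ satisfy this superadditivity; then invoke (a one-term version of) Billingsley's maximal inequality on each, with $g^{(1)}+g^{(2)}$ playing the role of the bound, noting that superadditivity is preserved under sums; then read off $C_{\alpha_1,\gamma}$ and $C_{\alpha_2,\gamma}$ from his constant, which depends only on the exponent and $\gamma$.

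The cleanest route that avoids re-deriving anything is to apply Theorem 12.2 of \cite{billingsley:1968} \emph{twice}, once with the function $g^{(m)}_{ij}=(v_m\mu_{ij})^{\alpha_m}$, together with a union bound: write $|\s_j-\s_i|>\lambda \subseteq \{|\s_j-\s_i|>\lambda/2\}$ and split, so that on the event-level we get $\PP(M_N>\lambda)\le \PP(M_N^{(1)}>\lambda/2)+\ldots$ — actually more carefully, since $\s$ is a single process, I cannot split $\s$ itself, so instead I split the \emph{bound}: define $h_{ij}=(v_1\mu_{ij})^{\alpha_1}$ and observe $\PP(|\s_j-\s_i|>\lambda)\le \lambda^{-\gamma}h_{ij}+\lambda^{-\gamma}(v_2\mu_{ij})^{\alpha_2}$, and then run the Billingsley induction keeping both terms additively throughout; since at each inductive step one only uses superadditivity of the bound and the elementary inequality $(a+b)^{1/\gamma}$-type manipulations, and both $h$ and the second term are superadditive, the induction closes with the sum of the two bounds, each carrying its own combinatorial constant. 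The main obstacle I anticipate is bookkeeping: making sure the constant extracted genuinely depends only on $\alpha_m$ and $\gamma$ and not on $N$, $v_m$, or the sequence $(u_\ell)$ — this requires being careful that the dyadic recursion in Billingsley's proof produces a geometric series in the exponent that sums to a finite constant precisely because $\alpha_m>1$ (the condition $\alpha_m>1$ is used exactly here, to make $2^{1-\alpha_m}<1$ so the recursion converges). Everything else is routine; the substance is just checking that Billingsley's argument is ``linear in the bound'' and therefore handles a sum of two admissible bounds with no extra work.
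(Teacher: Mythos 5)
Your plan is correct and is essentially the paper's own proof: the paper reruns the induction behind Billingsley's Theorems 12.1--12.2 carrying both summands additively through every step (the Cauchy--Schwarz two-increment lemma, the split $M'_N\leq\max\{U_1+D_1,U_2+D_2\}$, and the $(A^\delta+B^\delta)^{1/\delta}$ optimization), with the recursion closing because $2^{-\alpha_1}+2^{-\alpha_2}<1$ when $\alpha_1,\alpha_2>1$ — exactly the mechanism you identify. The one point worth making explicit in a write-up is that the single splitting index $h$ chosen to halve $\sum_{\ell}u_{\ell}$ serves both terms simultaneously only because they are built from the \emph{same} sequence $(u_{\ell})$, which is why the paper's remark notes that the generalization would fail for two different sequences.
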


\begin{remark}
Note that Lemma \ref{le:billmod12.2} reduces to Theorem 12.2 in \cite{billingsley:1968} by setting $v_{2} = 0$. In particular, our statement remains true if either $v_{1}$ or $v_{2}$ are zero. Our proof below reveals that the generalization only works when the two summands in \eqref{eq:billmod12.2} depend on the same sequence $(u_{\ell})_{1 \leq \ell \leq N}$.
\end{remark}

\begin{proof}
The proof follows the proofs of Theorems 12.1 and 12.2 in \cite{billingsley:1968} and requires only slight modifications of the arguments. First, note that
\begin{equation} \label{eq:billmodeq1}
\PP(M_{N} > \lambda) \leq \PP(M'_{N} > \lambda/2) + \PP(\s_{N} > \lambda/2)
\end{equation}
with $M'_{N} = \max_{0 \leq i \leq N} \min \{ | \s_{i} |, | \s_{N} - \s_{i} | \}$.
We consider the two probabilities in \eqref{eq:billmodeq1} separately. Using assumption \eqref{eq:billmod12.2} with $j=N$ and $i=0$, the second probability can be bounded as
\begin{equation*}
\PP(\s_{N} > \lambda/2)
\leq
\frac{2^{\gamma}}{\lambda^\gamma} \left(
\left( v_{1} \sum_{\ell = 1}^{N} u_{\ell} \right)^{\alpha_{1}}
+
\left( v_{2} \sum_{\ell = 1}^{N} u_{\ell} \right)^{\alpha_{2}}
\right).
\end{equation*}
We prove a bound for the first probability in \eqref{eq:billmodeq1} via induction over $N$. Our induction hypothesis is, for $\mu = \lambda/2$, 
\begin{equation} \label{eq:inductivehypothesis}
\PP(M'_{N} > \mu)
\leq 
\frac{C_{\alpha_{1},\gamma}}{\mu^\gamma} 
\left( v_{1} \sum_{\ell = 1}^{N} u_{\ell} \right)^{\alpha_{1}}
+
\frac{C_{\alpha_{2},\gamma}}{\mu^\gamma} 
\left( v_{2} \sum_{\ell = 1}^{N} u_{\ell} \right)^{\alpha_{2}}.
\end{equation}
Beginning the induction with the base case $N=2$, we get
\begin{align}
\PP(M'_{2} > \mu) 
=
\PP(\min \{ | \s_{1} |, | \s_{2} - \s_{1} | \} > \mu) 
&\leq
\frac{1}{\mu^\gamma}
\left(
\left( v_{1} \sum_{\ell = 1}^{2} u_{\ell} \right)^{\alpha_{1}}
+
\left( v_{2} \sum_{\ell = 1}^{2} u_{\ell} \right)^{\alpha_{2}}
\right)
\label{eq:bprove1111}
\end{align}
by applying Lemma \ref{le:LemmaD2} with $i=0, j =1, k=2$ in \eqref{eq:bprove1111}.

For the inductive step, we assume that the induction hypothesis \eqref{eq:inductivehypothesis} is satisfied for all integers smaller and equal to $N-1$ and move towards $N$ during the inductive step.
Note that by equation (12.29) in \cite{billingsley:1968}, there is an $h$ such that
\begin{equation} \label{eq:definitionofhinequality}
 \sum_{\ell = 1}^{h-1} u_{\ell} \leq \frac{1}{2} \sum_{\ell = 1}^{N} u_{\ell} \leq \sum_{\ell = 1}^{h} u_{\ell},
\end{equation}
where the sum on the left is zero if $h=1$. By algebraic computations one can infer
\begin{equation} \label{eq:definitionofhinequality2}
 \sum_{\ell = h+1}^{N} u_{\ell} \leq \frac{1}{2} \sum_{\ell = 1}^{N} u_{\ell} \leq \sum_{\ell = h}^{N} u_{\ell}.
\end{equation}
To see this, note that
\begin{align}
  \frac{1}{2} \sum_{\ell = 1}^{N} u_{\ell} \leq \sum_{\ell = 1}^{h} u_{\ell}
&\Rightarrow
  \frac{1}{2} \sum_{\ell = 1}^{N} u_{\ell} + \sum_{\ell = h+1}^{N} u_{\ell} 
  \leq \sum_{\ell = 1}^{h} u_{\ell} + \sum_{\ell = h+1}^{N} u_{\ell}
\nonumber
\\&\Rightarrow
  \sum_{\ell = h+1}^{N} u_{\ell} 
  \leq \sum_{\ell = 1}^{N} u_{\ell} - \frac{1}{2} \sum_{\ell = 1}^{N} u_{\ell}
\nonumber
\\&\Rightarrow
  \sum_{\ell = h+1}^{N} u_{\ell} \leq \frac{1}{2} \sum_{\ell = 1}^{N} u_{\ell}.
\label{eq:algebra1}
\end{align}
Using the first inequality of \eqref{eq:definitionofhinequality}, we get
\begin{align}
  \sum_{\ell = 1}^{h-1} u_{\ell} \leq \frac{1}{2} \sum_{\ell = 1}^{N} u_{\ell}
&\Rightarrow
  \sum_{\ell = 1}^{h-1} u_{\ell} + \sum_{\ell = h}^{N} u_{\ell} \leq \frac{1}{2} \sum_{\ell = 1}^{N} u_{\ell} + \sum_{\ell = h}^{N} u_{\ell}
\nonumber
\\&\Rightarrow
  \sum_{\ell = 1}^{N} u_{\ell} - \frac{1}{2} \sum_{\ell = 1}^{N} u_{\ell} \leq \sum_{\ell = h}^{N} u_{\ell}
\nonumber
\\&\Rightarrow
  \frac{1}{2} \sum_{\ell = 1}^{N} u_{\ell} \leq \sum_{\ell = h}^{N} u_{\ell}.
\label{eq:algebra2}
\end{align}
Combining \eqref{eq:algebra1} and \eqref{eq:algebra2} we get the desired result.

By (12.36) in \cite{billingsley:1968},
\begin{equation*}
M'_{N} \leq \max\{ U_{1} + D_{1}, U_{2} + D_{2} \}
\end{equation*}
and therefore
\begin{equation} \label{eq:ywoprobssum}
\PP( M'_{N} > \mu ) \leq \PP( U_{1} + D_{1} > \mu) + \PP( U_{2} + D_{2} > \mu )
\end{equation}
with
\begin{align}
U_{1} &= \max_{0 \leq i \leq h-1} \min \{ | \s_{i} |, | \s_{h-1} - \s_{i} | \},
\hspace{0.2cm}
U_{2} = \max_{h \leq i \leq N} \min \{ | \s_{j} - \s_{h} |, | \s_{N} - \s_{j} | \},
\label{eq:U1andU2}
\\
D_{1} &= \min \{ | \s_{h-1} |, | \s_{N} - \s_{h-1} | \},
\hspace{0.2cm}
D_{2} = \min \{ | \s_{h} |, | \s_{N} - \s_{h} | \}.
\label{eq:D1andD2}
\end{align}
The tail probabilities of the random variables \eqref{eq:U1andU2} and \eqref{eq:D1andD2} can be bounded by using the inductive hypothesis \eqref{eq:inductivehypothesis} and Lemma \ref{le:LemmaD2}, respectively.
Exemplarily, we consider $U_{1}$ and $D_{1}$. For $U_{1}$, we get the following bounds
\begin{align}
\PP(U_{1} > \mu) 
&\leq 
\frac{C_{\alpha_{1},\gamma}}{\mu^\gamma} 
\left( v_{1} \sum_{\ell = 1}^{h-1} u_{\ell} \right)^{\alpha_{1}}
+
\frac{C_{\alpha_{2},\gamma}}{\mu^\gamma} 
\left( v_{2} \sum_{\ell = 1}^{h-1} u_{\ell} \right)^{\alpha_{2}}
\label{eq:popopopopo1}
\\&\leq 
\frac{C_{\alpha_{1},\gamma}}{\mu^\gamma} \frac{1}{2^{\alpha_{1}}}
\left( v_{1} \sum_{\ell = 1}^{N} u_{\ell} \right)^{\alpha_{1}}
+
\frac{C_{\alpha_{2},\gamma}}{\mu^\gamma} \frac{1}{2^{\alpha_{2}}}
\left( v_{2} \sum_{\ell = 1}^{N} u_{\ell} \right)^{\alpha_{2}}
\label{eq:popopopopo2}
\end{align}
by applying the inductive hypothesis \eqref{eq:inductivehypothesis} in \eqref{eq:popopopopo1} and the inequality
\eqref{eq:definitionofhinequality} in \eqref{eq:popopopopo2}. The tail probability of $U_{2}$ can be dealt with analogously by applying \eqref{eq:definitionofhinequality2}.
For $D_{1}$, we get
\begin{align}
\PP(D_{1} > \mu) &\leq 
\frac{1}{\mu^\gamma} \left(
\left( v_{1} \sum_{\ell = 1}^{N} u_{\ell} \right)^{\alpha_{1}}
+
\left( v_{2} \sum_{\ell = 1}^{N} u_{\ell} \right)^{\alpha_{2}}
\right)
\label{eq:popopopopo3}
\end{align}
by Lemma \ref{le:LemmaD2} with $i=0, j=h-1$ and $k = N$. The tail probability of $D_{2}$ can be handled analogously by applying Lemma \ref{le:LemmaD2} with $i=0, j=h$ and $k = N$.

We now continue with bounding \eqref{eq:ywoprobssum} with focus on the first summand since the second summand can be bounded by analogous arguments. With explanations given below, for some positive $\mu_{0},\mu_{1}$ with $\mu_{0}+\mu_{1}=\mu$,
\begin{align}
&
\PP( U_{1} + D_{1} > \mu)
\nonumber
\\&\leq 
\PP( U_{1} > \mu_{0} ) + \PP( D_{1} > \mu_{1} ) 
\nonumber
\\&\leq
\frac{C_{\alpha_{1},\gamma}}{\mu_{0}^\gamma} \frac{1}{2^{\alpha_{1}}}
\left( v_{1} \sum_{\ell = 1}^{N} u_{\ell} \right)^{\alpha_{1}}
+
\frac{C_{\alpha_{2},\gamma}}{\mu_{0}^\gamma} \frac{1}{2^{\alpha_{2}}}
\left( v_{2} \sum_{\ell = 1}^{N} u_{\ell} \right)^{\alpha_{2}}
+
\frac{1}{\mu_{1}^\gamma} \left(
\left( v_{1} \sum_{\ell = 1}^{N} u_{\ell} \right)^{\alpha_{1}}
+
\left( v_{2} \sum_{\ell = 1}^{N} u_{\ell} \right)^{\alpha_{2}}
\right)
\label{al:Bproveineq1}
\\&=
\frac{1}{\mu^\gamma} \left(
\left(
\frac{C_{\alpha_{1},\gamma}}{ 2^{\alpha_{1}}}
\left( v_{1} \sum_{\ell = 1}^{N} u_{\ell} \right)^{\alpha_{1}}
+
\frac{C_{\alpha_{2},\gamma}}{2^{\alpha_{2}}}
\left( v_{2} \sum_{\ell = 1}^{N} u_{\ell} \right)^{\alpha_{2}}
\right)^{\delta}
+
\left(
\left( v_{1} \sum_{\ell = 1}^{N} u_{\ell} \right)^{\alpha_{1}}
+
\left( v_{2} \sum_{\ell = 1}^{N} u_{\ell} \right)^{\alpha_{2}}
\right)^{\delta}
\right)^{\frac{1}{\delta}}
\label{al:Bproveineq2}
\\&\leq
\frac{1}{\mu^\gamma} \left(
\left(
C_{\alpha_{1},\gamma}
\left( v_{1} \sum_{\ell = 1}^{N} u_{\ell} \right)^{\alpha_{1}}
+
C_{\alpha_{2},\gamma}
\left( v_{2} \sum_{\ell = 1}^{N} u_{\ell} \right)^{\alpha_{2}}
\right)^{\delta}
\left[
\left(
\frac{1}{ 2^{\alpha_{1}}} + \frac{1}{ 2^{\alpha_{2}}}
\right)^{\delta}
+
\left(
\frac{1}{ C_{\alpha_{1},\gamma}} + \frac{1}{ C_{\alpha_{2},\gamma}}
\right)^{\delta}
\right]
\right)^{\frac{1}{\delta}}
\label{al:Bproveineq3}
\\&\leq
\frac{1}{\mu^\gamma} 
\left(
C_{\alpha_{1},\gamma}
\left( v_{1} \sum_{\ell = 1}^{N} u_{\ell} \right)^{\alpha_{1}}
+
C_{\alpha_{2},\gamma}
\left( v_{2} \sum_{\ell = 1}^{N} u_{\ell} \right)^{\alpha_{2}}
\right),
\label{al:Bproveineq4}
\end{align}
where \eqref{al:Bproveineq1} follows by \eqref{eq:popopopopo2} and \eqref{eq:popopopopo3}. For \eqref{al:Bproveineq2} we recall (12.39) in \cite{billingsley:1968}. It states that for positive numbers $A,B, \lambda$, 
\begin{equation*}
\min_{ \substack{ \lambda_{0},\lambda_{1} > 0 \\ \lambda_{0} + \lambda_{1} = \lambda } } \left( \frac{A}{\lambda_{0}^{\gamma}} + \frac{B}{\lambda_{1}^{\gamma}} \right)
=
\frac{1}{\lambda^{\gamma}} ( A^{\delta} + B^{\delta} )^{\frac{1}{\delta}}
\end{equation*}
with $\delta = \frac{1}{\gamma + 1}$.
The inequality \eqref{al:Bproveineq3} follows since
\begin{equation*}
\left( v_{1} \sum_{\ell = 1}^{N} u_{\ell} \right)^{\alpha_{1}}
+
\left( v_{2} \sum_{\ell = 1}^{N} u_{\ell} \right)^{\alpha_{2}}
\leq
\left(
C_{\alpha_{1},\gamma}
\left( v_{1} \sum_{\ell = 1}^{N} u_{\ell} \right)^{\alpha_{1}}
+
C_{\alpha_{2},\gamma}
\left( v_{2} \sum_{\ell = 1}^{N} u_{\ell} \right)^{\alpha_{2}}
\right)
\left(
\frac{1}{ C_{\alpha_{1},\gamma}} + \frac{1}{ C_{\alpha_{2},\gamma}}
\right).
\end{equation*}
Finally, we get \eqref{al:Bproveineq4} by choosing the constants $C_{\alpha_{1},\gamma}, C_{\alpha_{2},\gamma}$ large enough to get 
\begin{equation} \label{eq:iuiuiuiu}
\left[
\left(
\frac{1}{ 2^{\alpha_{1}}} + \frac{1}{ 2^{\alpha_{2}}}
\right)^{\delta}
+
\left(
\frac{1}{ C_{\alpha_{1},\gamma}} + \frac{1}{ C_{\alpha_{2},\gamma}}
\right)^{\delta}
\right]^{\frac{1}{\delta}} \leq 1
\end{equation}
which is possible since \eqref{eq:iuiuiuiu} is equivalent to
\begin{equation*}
\left(
\frac{1}{ 2^{\alpha_{1}}} + \frac{1}{ 2^{\alpha_{2}}}
\right)^{\delta}
+
\left(
\frac{1}{ C_{\alpha_{1},\gamma}} + \frac{1}{ C_{\alpha_{2},\gamma}}
\right)^{\delta}
\leq 1
\end{equation*}
and due to our assumption that $\alpha_{1},\alpha_{2}>1$.
\end{proof}

\begin{lemma} \label{le:LemmaD2}
Suppose there are $\gamma > 0$, $\alpha_{1},\alpha_{2} > 1$, \eqref{eq:billmod12.2} is satisfied with $v_{1}, v_{2}> 0$ and a positive sequence $(u_{\ell})_{1 \leq \ell \leq N}$. Then, for all $\lambda > 0$,
\begin{align*}
\PP\left( \left| \s_{j} - \s_{i} \right| >\lambda, \left| \s_{k} - \s_{j} \right| >\lambda \right)
\leq
\frac{1}{\lambda^\gamma}
\left(
\left( v_{1} \sum_{\ell = i+1}^{k} u_{\ell} \right)^{\alpha_{1}}
+
\left( v_{2} \sum_{\ell = i+1}^{k} u_{\ell} \right)^{\alpha_{2}}
\right),
\hspace{0.2cm} 
0 \leq i \leq j \leq k \leq N.
\end{align*}
\end{lemma}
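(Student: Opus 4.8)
The plan is to reduce the joint tail probability to a single‑increment tail probability and then invoke hypothesis \eqref{eq:billmod12.2}. Abbreviate $A = \{|\s_j - \s_i| > \lambda\}$, $B = \{|\s_k - \s_j| > \lambda\}$, and $S_{a,b} = \sum_{\ell=a+1}^b u_\ell$ for $0 \le a \le b \le N$. Since every $u_\ell$ is positive and $i \le j \le k$, we have $0 \le S_{i,j} \le S_{i,k}$ and $0 \le S_{j,k} \le S_{i,k}$.

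First I would use the trivial containment $A \cap B \subseteq A$, which gives $\PP(A \cap B) \le \PP(A)$, and then apply \eqref{eq:billmod12.2} to the increment $\s_j - \s_i$, yielding $\PP(A) \le \lambda^{-\gamma}\bigl((v_1 S_{i,j})^{\alpha_1} + (v_2 S_{i,j})^{\alpha_2}\bigr)$. Because $t \mapsto t^{\alpha_m}$ is nondecreasing on $[0,\infty)$ and $v_m S_{i,j} \le v_m S_{i,k}$ for $m = 1,2$, each of the two summands is at most the corresponding summand with $S_{i,j}$ replaced by $S_{i,k}$, which is precisely the asserted bound. A more symmetric argument, matching the spirit of Billingsley's original lemma, also works: $\PP(A \cap B) \le \min\{\PP(A),\PP(B)\} \le \sqrt{\PP(A)\PP(B)} \le \tfrac12(\PP(A) + \PP(B))$, where one bounds $\PP(A)$ via the pair $(i,j)$ and $\PP(B)$ via $(j,k)$, and then applies the super‑additivity $a^{\alpha_m} + b^{\alpha_m} \le (a+b)^{\alpha_m}$ — valid for $\alpha_m \ge 1$ and $a,b \ge 0$, hence applicable since $\alpha_1, \alpha_2 > 1$ — to collapse $S_{i,j}^{\alpha_m} + S_{j,k}^{\alpha_m} \le S_{i,k}^{\alpha_m}$; this even leaves a factor $\tfrac12$ to spare.

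The degenerate cases $i = j$ or $j = k$ need no separate treatment: one of the two increments is then identically zero, so $A$ or $B$ is empty for $\lambda > 0$ and the left‑hand side vanishes. I do not expect any genuine obstacle here — the only ingredients are the monotonicity of the partial sums $S_{\cdot,\cdot}$ in their endpoints and the monotonicity (respectively super‑additivity) of the power maps $t \mapsto t^{\alpha_m}$; the content of the lemma is purely the bookkeeping needed to supply the estimate consumed by Lemma \ref{le:billmod12.2} in the inductive step of its proof, where increments such as $\s_{h-1} - \s_0$ and $\s_N - \s_{h-1}$ are split at an intermediate index.
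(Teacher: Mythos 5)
Your proposal is correct, and both of your routes reach the stated bound; they differ from the paper's argument in an instructive way. The paper proves the lemma by Cauchy--Schwarz, $\PP(A\cap B)\leq \PP^{1/2}(A)\,\PP^{1/2}(B)$, applies \eqref{eq:billmod12.2} separately to the increments over $(i,j]$ and $(j,k]$, expands the product of the two square-rooted bounds, pushes every partial sum up to $\sum_{\ell=i+1}^{k}u_\ell$ by monotonicity, and recognizes the result as a perfect square. Your second route starts the same way but then uses $\sqrt{ab}\leq\tfrac12(a+b)$ together with the exact additivity $\sum_{\ell=i+1}^{j}u_\ell+\sum_{\ell=j+1}^{k}u_\ell=\sum_{\ell=i+1}^{k}u_\ell$ and the superadditivity of $t\mapsto t^{\alpha_m}$ for $\alpha_m\geq 1$; this is slightly cleaner than expanding the product and even yields a spare factor $\tfrac12$. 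Your first route is simpler still and worth highlighting: since the asserted conclusion is of the \emph{same} order as the single-increment bound over $(i,k]$ --- unlike Billingsley's Theorem 12.1, where the joint bound must be the square of the individual one --- the trivial containment $\PP(A\cap B)\leq\PP(A)$ followed by monotonicity of $t\mapsto t^{\alpha_m}$ already gives the claim, and a glance at \eqref{eq:popopopopo3} confirms that this constant-$1$ bound is exactly what the inductive step of Lemma \ref{le:billmod12.2} consumes. Your handling of the degenerate cases $i=j$ or $j=k$ is also fine (one event is then empty for $\lambda>0$). No gaps.
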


\begin{proof}
We follow the arguments in the proof of Theorem 12.1 in \cite{billingsley:1968}. That is, 
\begin{align}
&
\PP\left( \left| \s_{j} - \s_{i} \right| >\lambda, \left| \s_{k} - \s_{j} \right| >\lambda \right) \nonumber
\\&\leq
\PP^{\frac{1}{2}}\left( \left| \s_{j} - \s_{i} \right| >\lambda \right) \PP^{\frac{1}{2}}\left( \left| \s_{k} - \s_{j} \right| >\lambda \right) \nonumber
\\&\leq
\frac{1}{\lambda^\gamma}
\left(
\left( v_{1} \sum_{\ell = i+1}^{j} u_{\ell} \right)^{\alpha_{1}}
+
\left( v_{2} \sum_{\ell = i+1}^{j} u_{\ell} \right)^{\alpha_{2}}
\right)^{\frac{1}{2}}
\left(
\left( v_{1} \sum_{\ell = j+1}^{k} u_{\ell} \right)^{\alpha_{1}}
+
\left( v_{2} \sum_{\ell = j+1}^{k} u_{\ell} \right)^{\alpha_{2}}
\right)^{\frac{1}{2}}
\label{eq:billmodeq000}
\\&=
\frac{1}{\lambda^\gamma}
\Bigg(
\left( v_{1} \sum_{\ell = i+1}^{j} u_{\ell} \right)^{\alpha_{1}}
\left( v_{1} \sum_{\ell = j+1}^{k} u_{\ell} \right)^{\alpha_{1}}
+
\left( v_{2} \sum_{\ell = i+1}^{j} u_{\ell} \right)^{\alpha_{2}}
\left( v_{1} \sum_{\ell = j+1}^{k} u_{\ell} \right)^{\alpha_{1}}
\nonumber
\\&\hspace{1cm}+
\left( v_{1} \sum_{\ell = i+1}^{j} u_{\ell} \right)^{\alpha_{1}}
\left( v_{2} \sum_{\ell = j+1}^{k} u_{\ell} \right)^{\alpha_{2}}
+
\left( v_{2} \sum_{\ell = i+1}^{j} u_{\ell} \right)^{\alpha_{2}}
\left( v_{2} \sum_{\ell = j+1}^{k} u_{\ell} \right)^{\alpha_{2}}
\Bigg)^{\frac{1}{2}}
\nonumber
\\&\leq
\frac{1}{\lambda^\gamma}
\Bigg(
\left( v_{1} \sum_{\ell = i+1}^{k} u_{\ell} \right)^{ 2 \alpha_{1}}
+
2
\left( v_{1} \sum_{\ell = i+1}^{k} u_{\ell} \right)^{\alpha_{1}}
\left( v_{2} \sum_{\ell = i+1}^{k} u_{\ell} \right)^{\alpha_{2}}
+
\left( v_{2} \sum_{\ell = i+1}^{k} u_{\ell} \right)^{ 2 \alpha_{2}}
\Bigg)^{\frac{1}{2}}
\label{eq:billmodeq2}
\\&\leq
\frac{1}{\lambda^\gamma}
\left(
\left( v_{1} \sum_{\ell = i+1}^{k} u_{\ell} \right)^{\alpha_{1}}
+
\left( v_{2} \sum_{\ell = i+1}^{k} u_{\ell} \right)^{\alpha_{2}}
\right),
\nonumber
\end{align}
where \eqref{eq:billmodeq000} is due to \eqref{eq:billmod12.2} and \eqref{eq:billmodeq2} follows since $xy \leq (x+ y)^2$ for $x,y >0$ and $\sum_{\ell = j+1}^{k} u_{\ell} \leq \sum_{\ell = i+1}^{k} u_{\ell}$.
\end{proof}

\section{Additional results and their proofs} \label{se:appendixC}
For shortness' sake we set $\widebar{L}_{N}(x) = \frac{1}{N}\sum\limits_{n=1}^{N}L_n(x)$ in this section.
\begin{lemma} \label{le:CIquantiles}
An approximate $1-\alpha$ confidence interval of $F^{-1}(p)$ can be written as
\begin{align*}
&
\Bigg(
\phi(F_N) + \frac{1}{ \varphi(F^{-1}(p)) } \Big( \widebar{L}_{N}(F^{-1}(p)) + \frac{1}{\sqrt{N}} \sigma(F^{-1}(p)) z_{1-\frac{\alpha}{2}} \Big)
\\&\hspace{1cm}, 
\phi(F_N) + \frac{1}{ \varphi(F^{-1}(p)) } \Big( \widebar{L}_{N}(F^{-1}(p)) + \frac{1}{\sqrt{N}} \sigma(F^{-1}(p)) z_{\frac{\alpha}{2}} \Big)
\Bigg).
\end{align*}
\end{lemma}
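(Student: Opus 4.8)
The argument runs in three stages: a functional Taylor expansion of the quantile map, substitution of the higher-order decomposition of the empirical process supplied by Theorem~\ref{th:weakconvSRD}, and a routine Gaussian inversion.

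First, for the functional $\phi(F)=F^{-1}(p)$ I would combine the Taylor expansion \eqref{eq:Taylorapprox} with the explicit form \eqref{eq:derivativefunctional} of its derivative. Under Model~\ref{model} with $G$ strictly monotone and Gaussian marginals, $F'(F^{-1}(p))=\varphi(F^{-1}(p))>0$, so $\phi$ is (Hadamard) differentiable at $F$ and the denominators below are well defined. Using linearity of $\phi'_F$, I would split $(F_N-F)(F^{-1}(p))$ into its lower-order part $\widebar{L}_N(F^{-1}(p))=\frac1N\sum_{n=1}^{N}L_n(F^{-1}(p))$ and the complementary term $(F_N-F-\widebar{L}_N)(F^{-1}(p))$, which after rearranging gives
\begin{equation*}
\phi(F_N)+\frac{\widebar{L}_N(F^{-1}(p))}{\varphi(F^{-1}(p))}-\phi(F)
= -\frac{(F_N-F-\widebar{L}_N)(F^{-1}(p))}{\varphi(F^{-1}(p))}+R_N,
\end{equation*}
where $R_N$ is the Taylor remainder of \eqref{eq:Taylorapprox}.

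Second, I would apply Theorem~\ref{th:weakconvSRD} at the single point $x=F^{-1}(p)$ and at $t=1$, which yields $\sqrt{N}\,(F_N-F-\widebar{L}_N)(F^{-1}(p))\overset{\mathcal D}{\to}Z(F^{-1}(p))$, a centred Gaussian with variance $\sigma^2(F^{-1}(p))=\sum_{n\in\mathbb Z}\Cov(S_0(F^{-1}(p)),S_n(F^{-1}(p)))$. Provided $R_N=o_P(N^{-1/2})$, this gives the pivot
\begin{equation*}
\frac{\sqrt N\,\varphi(F^{-1}(p))}{\sigma(F^{-1}(p))}\Big(\phi(F_N)+\frac{\widebar{L}_N(F^{-1}(p))}{\varphi(F^{-1}(p))}-\phi(F)\Big)\overset{\mathcal D}{\to}\mathrm N(0,1).
\end{equation*}
The claimed interval then follows by the standard inversion: write $1-\alpha=\Pr\big(z_{\alpha/2}<\mathrm N(0,1)<z_{1-\alpha/2}\big)$, insert the pivot, solve the double inequality for $\phi(F)=F^{-1}(p)$, and use the symmetry $z_{\alpha/2}=-z_{1-\alpha/2}$ of the standard normal quantiles to put the endpoints in the stated form; the $o(1)$ coverage error comes from the distributional convergence and from $R_N=o_P(N^{-1/2})$.

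\textbf{Main obstacle.} The delicate point is the control of the remainder $R_N$. The generic Hadamard-differentiability bound only gives $R_N=o_P(\|F_N-F\|_\infty)=o_P(N^{H-1}L^{1/2}(N))$, which is coarser than the $N^{-1/2}$ fluctuation we are trying to isolate as soon as $H>1/2$. What is needed is a sharper, Bahadur--Kiefer-type representation of the empirical $p$-quantile that is valid under long-range dependence, showing $R_N=o_P(N^{-1/2})$. I expect this to follow from the fact that, after subtracting the lower-order Hermite terms, the empirical process is asymptotically a smooth object (a deterministic coefficient times a Gaussian limit), so its oscillation is of smaller order than $N^{-1/2}$; alternatively one can reduce to the corresponding statement for the dominating Hermite polynomial via the reduction principle \eqref{eq:reductionP} in the spirit of \cite{hossjer1995delta} and absorb the remaining higher-order terms through Theorem~\ref{th:weakconvSRD}. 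Establishing this representation is the technical heart of the proof; the remaining algebra is routine.
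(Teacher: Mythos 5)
Your proposal follows essentially the same route as the paper's proof: the Taylor expansion \eqref{eq:Taylorapprox} with the derivative formula \eqref{eq:derivativefunctional}, the split of $F_N-F$ into $\widebar{L}_N$ and the higher-order remainder, an application of Theorem~\ref{th:weakconvSRD} at $x=F^{-1}(p)$, $t=1$, and the standard Gaussian inversion to identify the critical values. The one point worth noting is that the remainder control you flag as the ``technical heart'' is not actually resolved in the paper either --- its proof simply writes ``$\approx$'' at the Taylor step and at the distributional-approximation step, so your more explicit concern about showing $R_N=o_P(N^{-1/2})$ identifies a gap the paper leaves implicit rather than a defect of your own argument.
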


\begin{proof}
Note that
\begin{align}
\PP \left( c_{1} \leq \frac{N}{d_N}\left( \phi(F_N)-\phi(F) \right) \leq c_{2} \right)
&=
\PP \left( \frac{d_N}{N} c_{1} \leq \phi(F_N)-\phi(F) \leq \frac{d_N}{N} c_{2} \right) \notag
\\&=
\PP \left( \phi(F_N) - \frac{d_N}{N} c_{2} \leq \phi(F) \leq \phi(F_N) - \frac{d_N}{N} c_{1} \right).
\label{al:appyuo}
\end{align}
In order to find an approximate $1-\alpha$ confidence interval, one has to determine the critical values $c_{1}, c_{2}$ in \eqref{al:appyuo}. Instead of utilizing the asymptotic distribution of the empirical process, we consider the asymptotic behavior of the higher-order approximation of the empirical process.

Then, with explanations given below,
\begin{align}
&\PP \left( c_{1} \leq \frac{N}{d_N}\left( \phi(F_N)-\phi(F) \right) \leq c_{2} \right) \notag
\\&=
\PP \left( c_{1} \leq \phi'_{F}\left(\frac{N}{d_N} (F_N - F) \right) \leq c_{2} \right)
 + o(1)
 \label{al:opopopopop1}
\\&=
\PP \left( \frac{d_N}{N} c_{1} \leq \frac{-1}{\varphi(F^{-1}(p))} (F_{N}(F^{-1}(p)) - F(F^{-1}(p))) \leq \frac{d_N}{N} c_{2} \right)
 + o(1)
\label{al:opopopopop2}
\\&=
\PP \left( -\varphi(F^{-1}(p)) \frac{d_N}{N} c_{2} \leq F_{N}(F^{-1}(p)) - F(F^{-1}(p)) \leq -\varphi(F^{-1}(p)) \frac{d_N}{N} c_{1} \right)
 + o(1)\notag
\\&=
\PP \Big( -\varphi(F^{-1}(p)) \frac{d_N}{\sqrt{N}} c_{2} - \sqrt{N}\widebar{L}_{N}(F^{-1}(p))
\notag
\\& \hspace{2cm} 
\leq \sqrt{N}\widebar{S}_{N}(F^{-1}(p)) \leq 
-\varphi(F^{-1}(p)) \frac{d_N}{\sqrt{N}} c_{1} - \sqrt{N}\widebar{L}_{N}(F^{-1}(p)) \Big)
 + o(1) \notag
\\&=
\PP \left( \sigma(F^{-1}(p)) z_{1-\frac{\alpha}{2}}
\leq S(F^{-1}(p),1) \leq 
\sigma(F^{-1}(p)) z_{\frac{\alpha}{2}} \right) + o(1), \label{al:opopopopop4}
\end{align}
where \eqref{al:opopopopop1} is due to the Taylor approximation 
\eqref{eq:Taylorapprox}, \eqref{al:opopopopop2} follows by the relation \eqref{eq:derivativefunctional} and \eqref{al:opopopopop4} is due to the asymptotic result in Theorem \ref{th:weakconvSRD} where $S(x,1)$ is a mean zero Gaussian process with cross-covariances 
$\sigma^2(x) = \sum_{n \in \ZZ} \Cov(S_{0}(x),S_{n}(x))$ given in \eqref{eq:covS}.

Based on the last approximation \eqref{al:opopopopop4}, we can infer the following relation between $c_{1}, c_{2}$ and the quantiles of the normal distribution
\begin{equation} \label{eq:relationcz}
\begin{gathered}
\sigma(F^{-1}(p)) z_{1-\frac{\alpha}{2}} = -\varphi(F^{-1}(p)) \frac{d_N}{\sqrt{N}} c_{1} - \sqrt{N}\widebar{L}_{N}(F^{-1}(p)),
\\
\sigma(F^{-1}(p)) z_{\frac{\alpha}{2}} = -\varphi(F^{-1}(p)) \frac{d_N}{\sqrt{N}} c_{2} - \sqrt{N}\widebar{L}_{N}(F^{-1}(p)).
\end{gathered}
\end{equation}
Combining \eqref{al:appyuo} and \eqref{eq:relationcz}, we can then infer the statement of the lemma since for example
\begin{align*}
&\phi(F_N) - \frac{d_N}{N} c_{1}
\\&=
\phi(F_N) - \frac{1}{ \varphi(F^{-1}(p)) } \frac{d_N}{N} \frac{\sqrt{N}}{d_N} \Big( -\sqrt{N}\widebar{L}_{N}(F^{-1}(p)) - \sigma(F^{-1}(p)) z_{1-\frac{\alpha}{2}} \Big)
\\&=
\phi(F_N) + \frac{1}{ \varphi(F^{-1}(p)) } \Big( \widebar{L}_{N}(F^{-1}(p)) + \frac{1}{\sqrt{N}} \sigma(F^{-1}(p)) z_{1-\frac{\alpha}{2}} \Big).
\end{align*}
\end{proof}

For the following lemma, recall from Section \ref{se:HOA} that $\widetilde{c}_l(x)=\E\pr{\1_{\{G(\xi_0)\leq x\}}H_l(\xi_0)}$.
\begin{lemma} \label{le:hermite_coefficient}
Suppose $G: \RR \to \RR$ is a monotonically increasing (decreasing), bijective function. Then,
\begin{align*}
\widetilde{c}_l(x)=
\begin{cases}
-H_{l-1}(G^{-1}(x))\varphi\left(G^{-1}(x)\right) \ &\text{if $G$ is increasing,}\\
H_{l-1}(G^{-1}(x))\varphi\left(G^{-1}(x)\right) \ &\text{if $G$ is decreasing.}
\end{cases}
\end{align*}
\end{lemma}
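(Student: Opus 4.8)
The plan is to evaluate the Hermite coefficient directly from its definition,
\[
\widetilde{c}_l(x)=\E\pr{\1_{\{G(\xi_0)\leq x\}}H_l(\xi_0)}=\int_{-\infty}^{\infty}\1_{\{G(y)\leq x\}}H_l(y)\varphi(y)\,dy,
\]
by reducing the event $\{G(y)\leq x\}$ to a half-line via monotonicity of $G$ and then integrating against an explicit antiderivative of $H_l\varphi$. Since $G$ is monotone and bijective, it is strictly monotone, so if $G$ is increasing the event $\{G(y)\leq x\}$ equals $\{y\leq G^{-1}(x)\}$ and hence $\widetilde{c}_l(x)=\int_{-\infty}^{G^{-1}(x)}H_l(y)\varphi(y)\,dy$; if $G$ is decreasing it equals $\{y\geq G^{-1}(x)\}$ and the integration range becomes $[G^{-1}(x),\infty)$.

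The key step is the antiderivative identity $\frac{d}{dy}\bigl(H_{l-1}(y)\varphi(y)\bigr)=-H_l(y)\varphi(y)$ for $l\geq 1$, which follows at once from the definition $H_n(x)=(-1)^n\e^{\frac12 x^2}\frac{d^n}{dx^n}\e^{-\frac12 x^2}$: writing $H_n(y)\varphi(y)=\tfrac{(-1)^n}{\sqrt{2\pi}}\frac{d^n}{dy^n}\e^{-\frac12 y^2}$ and differentiating once yields exactly $-H_l(y)\varphi(y)$ (the case $l=1$ is just $\varphi'(y)=-y\varphi(y)=-H_1(y)\varphi(y)$). Because $H_{l-1}(y)\varphi(y)\to 0$ as $\abs{y}\to\infty$ (a polynomial times a Gaussian), the fundamental theorem of calculus gives
\[
\int_{-\infty}^{a}H_l(y)\varphi(y)\,dy=-H_{l-1}(a)\varphi(a),
\qquad
\int_{a}^{\infty}H_l(y)\varphi(y)\,dy=H_{l-1}(a)\varphi(a).
\]
Applying the first identity with $a=G^{-1}(x)$ when $G$ is increasing, and the second with $a=G^{-1}(x)$ when $G$ is decreasing, produces the two cases of the lemma.

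There is no substantial obstacle here: the only points needing a line of justification are the verification of the antiderivative identity in the paper's normalization of the Hermite polynomials, the vanishing of the boundary term at $\pm\infty$, and the reduction of $\{G(y)\leq x\}$ to a half-line, all of which are routine once strict monotonicity is noted. One could alternatively phrase the computation via integration by parts against $\varphi$ using the recursion $H_l=xH_{l-1}-H_{l-1}'$, but the antiderivative route above is the cleanest.
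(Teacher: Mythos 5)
Your proposal is correct and follows essentially the same route as the paper: reduce $\{G(y)\leq x\}$ to a half-line by strict monotonicity and then evaluate $\int H_l\varphi$ using the fact that $-H_{l-1}(y)\varphi(y)$ is an antiderivative of $H_l(y)\varphi(y)$. The only difference is that you spell out the antiderivative identity and the vanishing boundary terms, which the paper leaves implicit by citing the derivative definition of the Hermite polynomials.
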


\begin{proof}
For a strictly monotonically increasing, bijective function $G$, it holds that
\begin{align*}
\widetilde{c}_l(x)=&\E\pr{\1_{\{G(\xi_0)\leq x\}}H_l(\xi_0)}\\
=&\int_{\RR} \1_{\{G(y)\leq x\}}H_l(y)\varphi(y)dy\\
=&\int_{\RR} \1_{\{y\leq G^{-1}(x)\}}H_l(y)\varphi(y)dy\\
=&\int_{-\infty}^{ G^{-1}(x)}H_l(y)\varphi(y)dy
=-H_{l-1}(G^{-1}(x))\varphi(G^{-1}(x)), 
\end{align*}
where the last equality follows from 
the definition of the Hermite polynomial $H_l$
as
\begin{align*}
H_l(x)=(-1)^l \frac{1}{\varphi(x)} \frac{\partial^l}{\partial x ^l}\varphi(x);
\end{align*}
see formula (4.1.1) in \cite{PipirasTaqqu}.
Analogously, it follows that 
for a strictly monotonically decreasing, bijective function $G$, it holds that
\begin{align*}
\widetilde{c}_l(x)=&\E\pr{\1_{\{G(\xi_0)\leq x\}}H_l(\xi_0)}\\
=&\int_{\RR} \1_{\{G(y)\leq x\}}H_l(y)\varphi(y)dy\\
=&\int_{\RR} \1_{\{y\geq G^{-1}(x)\}}H_l(y)\varphi(y)dy\\
=&\int_{ G^{-1}(x)}^{\infty}H_l(y)\varphi(y)dy
=H_{l-1}(G^{-1}(x))\varphi(G^{-1}(x)).
\end{align*}
\end{proof}

\bigskip
\noindent 

\section*{Acknowledgments}
We thank the editor, co-editor, and referees for their careful reading of the manuscript and their thoughtful comments, which led to a significant improvement of the article.
Annika Betken gratefully acknowledges financial support from the Dutch Research Council (NWO) through VENI grant 212.164.
Marie-Christine D\"uker gratefully acknowledges financial support from the National Science Foundation under grants 1934985, 1940124, 1940276, and 2114143. 
This research was conducted with support from the Cornell University Center for Advanced Computing, which receives funding from Cornell University, the National Science Foundation, and members of its Partner Program.

\bigskip

\bibliographystyle{apalike}
\bibliography{empprocess}

\end{document}